\newtheorem{theorem}{Theorem}[section]
\newtheorem{lemma}[theorem]{Lemma}
\newtheorem{proposition}[theorem]{Proposition}
\newtheorem{corollary}[theorem]{Corollary}
\newtheorem*{MainTheorem}{Main Theorem}
\newtheorem*{claim*}{Claim}
\newtheorem*{fact*}{Fact}
\theoremstyle{remark}
\newtheorem{remark}[theorem]{Remark}
\newcommand{\g}[1]{\ensuremath{\mathfrak{#1}}}
\DeclareMathOperator{\spann}{span}
\begin{document}
\title[Strings in abstract root systems]{Strings in abstract root systems}

\author[V.~Sanmart\'in-L\'opez]{V\'ictor Sanmart\'in-L\'opez}
\address{CITMAga, 15782 Santiago de Compostela, Spain.\newline\indent Department of Mathematics, Universidade de Santiago de Compostela, Spain}
\email{victor.sanmartin@usc.es}

\begin{abstract}
Let $\Phi$ be a subset of the simple roots of a (possibly non-reduced) abstract root system $\Sigma$, and let $\lambda \in \Sigma$. We define the $\Phi$-string of $\lambda$ as the set of elements in $\Sigma \cup \{0\}$ of the form $\lambda + \sum_{\alpha \in \Phi} n_\alpha \alpha$, where $n_\alpha$ is an integer for each $\alpha \in \Phi$. This notion can be regarded as some sort of generalization of the classical notion of $\alpha$-string, where $\alpha \in \Sigma$.

In this paper, we calculate the $\Phi$-string of $\lambda$ explicitly, for any abstract root system $\Sigma$, any subset $\Phi$ of simple roots, and any root $\lambda \in \Sigma$.
\end{abstract}

\thanks{The author has been supported by Grant PID2022-138988NB-I00 funded by MICIU/AEI/10.13039/501100011033 and by ERDF, EU, and by ED431C 2023/31 (Xunta de Galicia).}
\subjclass[2020]{17B22, 53C35, 17B20}
\keywords{Abstract root system, string, simple root, Dynkin diagram}
\maketitle

\vspace{-0.8cm}
\section{Introduction}\label{section:introduction}

The classification of semisimple Lie algebras, both complex and real, relies on the investigation of abstract root systems, and leads to Dynkin and Vogan or Satake diagrams. One of the key notions in order to address the inspection of abstract root systems is that of \emph{string}~\cite[p.~152]{K}. In this paper we focus on a generalization of this~notion.

More precisely, let $\Sigma$ be a (possibly non-reduced) abstract root system, let $\Sigma^+$ denote the set of positive roots, and let $\Pi$ be the set of simple roots for this criterion of positivity. Now, any root $\lambda \in \Sigma^+$ can be expressed with respect to $\Pi$ as a linear combination of non-negative integers whose sum is called the~\emph{level} of $\lambda$. Let $\Phi$ be a subset of $\Pi$ and let $\lambda$ be a root in $\Sigma$. We define the $\Phi$-string of $\lambda$ as the set $I_\Phi^\lambda$ of elements in $\Sigma \cup \{0\}$ of the form $\lambda + \sum_{\alpha \in \Phi} n_\alpha \alpha$, where $n_\alpha$ is an integer for each $\alpha \in \Phi$. We say that $\Phi$ is \emph{connected} if there is a connected subgraph of the Dynkin diagram of $\Sigma$ whose nodes correspond precisely to the roots in $\Phi$. The aim of this paper is to prove the following

\begin{MainTheorem}\label{Main:Theorem}
Let $\Phi$ be a proper subset of the set of simple roots $\Pi$ of the root system $\Sigma$, and let $\lambda$ be a root in $\Sigma$. Then:
\begin{enumerate}[\rm (i)]
\item If $\lambda\in \spann\Phi$, then the $\Phi$-string of $\lambda$ is the root subsystem $\Sigma_\Phi$ of $\Sigma$ spanned by $\Phi$. \label{Main:Theorem:1}
\end{enumerate}
If $\lambda \in \Sigma^+$ is not spanned by $\Phi$ and it is of minimum level in its $\Phi$-string, then:
\begin{enumerate}[\rm (i)]
\setcounter{enumi}{1}
\item The set $\Sigma_{\lambda, \Phi} = \spann_{\mathbb{Z}} (\{\lambda\} \cup \Phi) \cap \Sigma$ is a root subsystem of $\Sigma$ for which $\Pi_{\lambda, \Phi}=\{\lambda\} \cup \Phi$ is a simple system, and $\lambda$ is the unique root of minimum level in its $\Phi$-string.\label{Main:Theorem:2}
\item The $\Phi$-string of $\lambda$ has no gaps, that is, given any root $\gamma$ in the $\Phi$-string of $\lambda$ different from $\lambda$, there exists some $\alpha \in \Phi$ such that $\gamma - \alpha$ is a root.\label{Main:Theorem:3}
\item If $\Phi$ is a connected subset of $\Pi$, the $\Phi$-string of $\lambda$ is explicitly calculated in Table~\ref{table:classical} when $\Sigma_{\lambda, \Phi}$ is of classical type, and in Table~\ref{table:exceptional} if $\Sigma_{\lambda, \Phi}$ is of exceptional type.\label{Main:Theorem:4}
\item If $\Phi = \Phi_1 \cup \dots \cup \Phi_n$, where the $\Phi_i$'s are mutually orthogonal connected subsets of $\Pi$, then 
$I_\Phi^\lambda = \{ \lambda + \sum_{k = 1}^n  \gamma_k \, : \, \gamma_k \in \spann \Phi_k \, \, \, \text{and} \, \, \, \lambda + \gamma_k \in I_{\Phi_k}^\lambda \subset \Sigma^+,  \, \text{for all} \, \, \, k = 1, \dots, n    \}$.\label{Main:Theorem:5}
\end{enumerate}
\end{MainTheorem}

\begin{landscape}
\begin{table}
\renewcommand*{\arraystretch}{2}
\vspace{1.5cm}
\begin{tabular}{|c|c|c|c|}
\hline
\large \textbf{$\Sigma_\Phi$} & \textbf{$\Sigma_{\lambda, \Phi}$} & \textbf{Dynkin diagram of $\Sigma_{\lambda, \Phi}$} &  \textbf{$\Phi$-string of $\lambda$} \\
\hline
\hline
\hline
\multirow{4}{*}{$A_n$} & \multirow{2}{*}{$A_{n+1}$} & \multirow{2}{*}{
\begin{tikzpicture}
\phantom{\draw (0, 0.2) circle (0.1);}
\draw (0, 0) circle (0.1);
\draw (1, 0) circle (0.1);
\draw (3, 0) circle (0.1);
\draw (0.1, 0.) -- (0.9, 0.);
\draw (1., -0.3) node {$\alpha _1$};
\draw (1.1, 0.) -- (1.5, 0.);
\draw (1.7, 0.) -- (1.82, 0.);
\draw (1.94, 0.) -- (2.06, 0.);
\draw (2.18, 0.) -- (2.3, 0.);
\draw (2.5, 0) -- (2.9, 0);
\draw (3., -0.3) node {$\alpha _n$};
\draw (0, -0.3) node {$\lambda$};
\end{tikzpicture}
}
& \multirow{4}{*}{$\{ \lambda \} \cup \left\{ \lambda + \displaystyle\sum_{i=1}^l \alpha_i \, : \, 1 \leq l \leq n  \right\}$} \\
&&&\\
\cline{2-3} &$B_{n+1}$ & 
\multirow{2}{*}{
\begin{tikzpicture}
\phantom{\draw (0, 0.2) circle (0.1);}
\draw (0, 0) circle (0.1);
\draw (1, 0) circle (0.1);
\draw (3, 0) circle (0.1);
\draw (0.1, -0.04) -- (0.9, -0.04);
\draw (0.1, 0.04) -- (0.9, 0.04);
\draw (1., -0.3) node {$\alpha _1$};
\draw (1.1, 0.) -- (1.5, 0.);
\draw (1.7, 0.) -- (1.82, 0.);
\draw (1.94, 0.) -- (2.06, 0.);
\draw (2.18, 0.) -- (2.3, 0.);
\draw (2.5, 0) -- (2.9, 0);
\draw (3., -0.3) node {$\alpha _n$};
\draw (0, -0.3) node {$\lambda$};
\end{tikzpicture}}  &\\
\cline{2-2}
&$BC_{n+1}$& &\\
\hline \hline
\multirow{1}{*}{$A_n$} & \multirow{1}{*}{$C_{n+1}$}  & 
\multirow{1}{*}{
\begin{tikzpicture}
\phantom{\draw (0, 0.2) circle (0.1);}
\draw (0, 0) circle (0.1);
\draw (1, 0) circle (0.1);
\draw (3, 0) circle (0.1);
\draw (0.1, -0.04) -- (0.9, -0.04);
\draw (0.1, 0.04) -- (0.9, 0.04);
\draw (1., -0.3) node {$\alpha _1$};
\draw (1.1, 0.) -- (1.5, 0.);
\draw (1.7, 0.) -- (1.82, 0.);
\draw (1.94, 0.) -- (2.06, 0.);
\draw (2.18, 0.) -- (2.3, 0.);
\draw (2.5, 0) -- (2.9, 0);
\draw (3., -0.3) node {$\alpha _n$};
\draw (0, -0.3) node {$\lambda$};
\end{tikzpicture}
}
& $\{ \lambda \} \cup \left\{ \lambda +\displaystyle \sum_{i=1}^l \alpha_i, \, \lambda +\sum_{i=1}^l \alpha_i + \sum_{j=1}^m \alpha_j\, : \, \substack{  1 \leq l \leq n \\ \, 1 \leq m \leq l} \right\}$ \\ \hline \hline
\multirow{1}{*}{$B_n$} & \multirow{1}{*}{$B_{n+1}$} & 
\multirow{2}{*}{
\begin{tikzpicture}
\phantom{\draw (0, 0.2) circle (0.1);}
\draw (0, 0) circle (0.1);
\draw (1, 0) circle (0.1);
\draw (3, 0) circle (0.1);
\draw (4, 0) circle (0.1);
\draw (3.1, -0.04) -- (3.9, -0.04);
\draw (3.1, 0.04) -- (3.9, 0.04);
\draw (4., -0.3) node {$\alpha _n$};
\draw (0.1, 0.) -- (0.9, 0.);
\draw (1., -0.3) node {$\alpha _1$};
\draw (1.1, 0.) -- (1.5, 0.);
\draw (1.7, 0.) -- (1.82, 0.);
\draw (1.94, 0.) -- (2.06, 0.);
\draw (2.18, 0.) -- (2.3, 0.);
\draw (2.5, 0) -- (2.9, 0);
\draw (3., -0.3) node {$\alpha_{n-1}$};
\draw (0, -0.3) node {$\lambda$};
\end{tikzpicture}
}
& \multirow{2}{*}{$\left\{ \lambda \right\} \cup \left\{\lambda + \displaystyle \sum_{i=1}^l \alpha_i, \lambda +\displaystyle \sum_{i=1}^n \alpha_i  + \sum_{j=0}^k \alpha_{n-j}\, : \, \substack{1 \leq l \leq n \\ 0 \leq k \leq n-1}  \right\}$} \\
 \cline{1-2}
\multirow{1}{*}{$BC_n$} & \multirow{1}{*}{$BC_{n+1}$} & &  \\
\hline \hline
 \multirow{1}{*}{$C_n$} & \multirow{1}{*}{$C_{n+1}$} & \multirow{1}{*}{
\begin{tikzpicture}
\phantom{\draw (0, 0.2) circle (0.1);}
\draw (0, 0) circle (0.1);
\draw (1, 0) circle (0.1);
\draw (3, 0) circle (0.1);
\draw (4, 0) circle (0.1);
\draw (3.1, -0.04) -- (3.9, -0.04);
\draw (3.1, 0.04) -- (3.9, 0.04);
\draw (4., -0.3) node {$\alpha _n$};
\draw (0.1, 0.) -- (0.9, 0.);
\draw (1., -0.3) node {$\alpha _1$};
\draw (1.1, 0.) -- (1.5, 0.);
\draw (1.7, 0.) -- (1.82, 0.);
\draw (1.94, 0.) -- (2.06, 0.);
\draw (2.18, 0.) -- (2.3, 0.);
\draw (2.5, 0) -- (2.9, 0);
\draw (3., -0.3) node {$\alpha_{n-1}$};
\draw (0, -0.3) node {$\lambda$};
\end{tikzpicture}}
& $\left\{ \lambda \right\} \cup \left\{\lambda + \displaystyle \sum_{i=1}^l \alpha_i, \,  \lambda + \sum_{i=1}^{n} \alpha_i +\sum_{j=1}^{k} \alpha_{n-j} \, :  \, \substack{1 \leq l \leq n \\ 1 \leq k \leq  n-1}  \right\}$\\
\hline \hline 
 & & \multirow{3}{*}{
\begin{tikzpicture}
\draw (0, 0) circle (0.1);
\draw (1, 0) circle (0.1);
\draw (3, 0) circle (0.1);
\draw (4, 0) circle (0.1);
\draw (3, 1) circle (0.1);
\draw (3., 0.1) -- (3., 0.9);
\draw (3., 1.3) node {$\alpha _{n-1}$};
\draw (3.1, 0.) -- (3.9, 0.);
\draw (4., -0.3) node {$\alpha _n$};
\draw (0.1, 0.) -- (0.9, 0.);
\draw (1., -0.3) node {$\alpha _1$};
\draw (1.1, 0.) -- (1.5, 0.);
\draw (1.7, 0.) -- (1.82, 0.);
\draw (1.94, 0.) -- (2.06, 0.);
\draw (2.18, 0.) -- (2.3, 0.);
\draw (2.5, 0) -- (2.9, 0);
\draw (3., -0.3) node {$\alpha _{n-2}$};
\draw (0, -0.3) node {$\lambda$};
\end{tikzpicture}
}
& \multirow{3}{*}{$\left\{ \lambda, \, \lambda +\displaystyle \sum_{\substack{i=1 \\ i \neq n-1}}^n \alpha_i  \right\} \cup \left\{\,\lambda + \displaystyle\sum_{i=1}^l \alpha_i,  \,  \lambda + \displaystyle\sum_{i=1}^{n} \alpha_i +\displaystyle\sum_{j=2}^{k} \alpha_{n-j} \, :  \, \substack{1 \leq l \leq n \\  2 \leq k \leq n-1}  \right\}$}  \\ 
$D_n$ & $D_{n+1}$ & & \\ 
   & & & \\ \hline
\end{tabular}
\bigskip
\caption{$\Sigma_{\lambda, \Phi}$ classical root system.}
\label{table:classical}
\end{table}
\end{landscape}

\begin{table}[h]
\renewcommand*{\arraystretch}{2}
\begin{tabular}{|c|c|c|c|c|}
\hline
\large \textbf{$\Sigma_\Phi$} & \textbf{$\Sigma_{\lambda, \Phi}$} &\textbf{Conditions} & \textbf{Dynkin diagram of $\Sigma_{\lambda, \Phi}$} &  \textbf{$\Phi$-string of $\lambda$} \\
\hline
\hline
\hline
& & & \multirow{3}{*}{
\begin{tikzpicture}
\draw (1, 0) circle (0.1);
\draw (3, 0) circle (0.1);
\draw (4, 0) circle (0.1);
\draw (5, 0) circle (0.1);
\draw (4, 1) circle (0.1);
\draw (6, 0) circle (0.1);
\draw (4., 0.1) -- (4., 0.9);
\draw (4., 1.3) node {$\lambda$};
\draw (3.1, 0.) -- (3.9, 0.);
\draw (4., -0.3) node {$\alpha _3$};
\draw (4.1, 0.) -- (4.9, 0.);
\draw (5., -0.3) node {$\alpha _2$};
\draw (5.1, 0.) -- (5.9, 0.);
\draw (6., -0.3) node {$\alpha _1$};
\draw (1.1, 0.) -- (1.5, 0.);
\draw (1.7, 0.) -- (1.82, 0.);
\draw (1.94, 0.) -- (2.06, 0.);
\draw (2.18, 0.) -- (2.3, 0.);
\draw (2.5, 0) -- (2.9, 0);
\draw (3., -0.3) node {$\alpha _4$};
\draw (1, -0.3) node {$\alpha _n$};
\end{tikzpicture}
}
& \multirow{3}{*}{Proposition~\ref{proposition:a:e:string:type}} \\ 
$A_n$ & $E_{n+1}$ & $n \in \{5,6,7\}$ & & \\ 
& & & & \\ \hline \hline
& & & \multirow{3}{*}{
\begin{tikzpicture}
\draw (1, 0) circle (0.1);
\draw (3, 0) circle (0.1);
\draw (4, 0) circle (0.1);
\draw (5, 0) circle (0.1);
\draw (4, 1) circle (0.1);
\draw (6, 0) circle (0.1);
\draw (4., 0.1) -- (4., 0.9);
\draw (4., 1.3) node {$\alpha _2$};
\draw (3.1, 0.) -- (3.9, 0.);
\draw (4., -0.3) node {$\alpha _3$};
\draw (4.1, 0.) -- (4.9, 0.);
\draw (5., -0.3) node {$\alpha _1$};
\draw (5.1, 0.) -- (5.9, 0.);
\draw (6., -0.3) node {$\lambda$};
\draw (1.1, 0.) -- (1.5, 0.);
\draw (1.7, 0.) -- (1.82, 0.);
\draw (1.94, 0.) -- (2.06, 0.);
\draw (2.18, 0.) -- (2.3, 0.);
\draw (2.5, 0) -- (2.9, 0);
\draw (3., -0.3) node {$\alpha _4$};
\draw (1, -0.3) node {$\alpha _n$};
\end{tikzpicture}
}
& \multirow{3}{*}{Proposition~\ref{proposition:d:e:string:type}} \\ 
$D_n$ & $E_{n+1}$ & $n \in \{5,6,7\}$ & & \\ 
& & & & \\ \hline \hline
& & & \multirow{3}{*}{
\begin{tikzpicture}
\draw (1, 0) circle (0.1);
\draw (2, 0) circle (0.1);
\draw (3, 0) circle (0.1);
\draw (4, 0) circle (0.1);
\draw (5, 0) circle (0.1);
\draw (6, 0) circle (0.1);
\draw (4, 1) circle (0.1);
\draw (1.1, 0.) -- (1.9, 0.);
\draw (2., -0.3) node {$\alpha _6$};
\draw (2.1, 0.) -- (2.9, 0.);
\draw (3., -0.3) node {$\alpha _5$};
\draw (3.1, 0.) -- (3.9, 0.);
\draw (4., -0.3) node {$\alpha _4$};
\draw (4.1, 0.) -- (4.9, 0.);
\draw (5., -0.3) node {$\alpha _3$};
\draw (5.1, 0.) -- (5.9, 0.);
\draw (6., -0.3) node {$\alpha _1$};
\draw (4., 0.1) -- (4., 0.9);
\draw (4., 1.3) node {$\alpha _2$};
\draw (1, -0.3) node {$\lambda$};
\end{tikzpicture}
}
& \multirow{3}{*}{Proposition~\ref{proposition:e6:e7:string:type}} \\ 
$E_6$ & $E_7$ &  & & \\ 
& & & & \\ \hline \hline
& & & \multirow{3}{*}{
\begin{tikzpicture}
\draw (0, 0) circle (0.1);
\draw (1, 0) circle (0.1);
\draw (2, 0) circle (0.1);
\draw (3, 0) circle (0.1);
\draw (4, 0) circle (0.1);
\draw (5, 0) circle (0.1);
\draw (6, 0) circle (0.1);
\draw (4, 1) circle (0.1);
\draw (0.1, 0.) -- (0.9, 0.);
\draw (1., -0.3) node {$\alpha _7$};
\draw (1.1, 0.) -- (1.9, 0.);
\draw (2., -0.3) node {$\alpha _6$};
\draw (2.1, 0.) -- (2.9, 0.);
\draw (3., -0.3) node {$\alpha _5$};
\draw (3.1, 0.) -- (3.9, 0.);
\draw (4., -0.3) node {$\alpha _4$};
\draw (4.1, 0.) -- (4.9, 0.);
\draw (5., -0.3) node {$\alpha _3$};
\draw (5.1, 0.) -- (5.9, 0.);
\draw (6., -0.3) node {$\alpha _1$};
\draw (4., 0.1) -- (4., 0.9);
\draw (4., 1.3) node {$\alpha _2$};
\draw (0, -0.3) node {$\lambda$};
\end{tikzpicture}
}
& \multirow{3}{*}{Proposition~\ref{proposition:e7:e8:string:type}} \\ 
$E_7$ & $E_8$ &  & & \\ 
& & & & \\ \hline \hline
 \multirow{1}{*}{$B_3$} &  \multirow{1}{*}{$F_4$} & & 
 \multirow{1}{*}{\begin{tikzpicture}
 \phantom{\draw (0, 0.2) circle (0.1);}
 \draw (0, 0) circle (0.1);
 \draw (1, 0) circle (0.1);
 \draw (2, 0) circle (0.1);
 \draw (3, 0) circle (0.1);
 \draw (0.1, 0.) -- (0.9, 0.);
 \draw (1., -0.3) node {$\alpha _1$};
 \draw (1.1, -0.04) -- (1.9, -0.04);
 \draw (1.1, 0.04) -- (1.9, 0.04);
 \draw (2., -0.3) node {$\alpha _2$};
 \draw (2.1, 0.) -- (2.9, 0.);
 \draw (3., -0.3) node {$\alpha_3$};
 \draw (0, -0.3) node {$\lambda$};
 \end{tikzpicture}
}
 & Proposition~\ref{proposition:b:f:string:type} \\ \hline \hline
\multirow{1}{*}{$C_3$} &  \multirow{1}{*}{$F_4$} & & 
\multirow{1}{*}{\begin{tikzpicture}
\phantom{\draw (0, 0.2) circle (0.1);}
\draw (0, 0) circle (0.1);
\draw (1, 0) circle (0.1);
\draw (2, 0) circle (0.1);
\draw (3, 0) circle (0.1);
\draw (0.1, 0.) -- (0.9, 0.);
\draw (1., -0.3) node {$\alpha _2$};
\draw (1.1, -0.04) -- (1.9, -0.04);
\draw (1.1, 0.04) -- (1.9, 0.04);
\draw (2., -0.3) node {$\alpha _1$};
\draw (2.1, 0.) -- (2.9, 0.);
\draw (3., -0.3) node {$\lambda$};
\draw (0, -0.3) node {$\alpha _3$};
\end{tikzpicture}
}
& Proposition~\ref{proposition:c:f:string:type} \\ \hline 

\end{tabular}
\bigskip
\caption{$\Sigma_{\lambda, \Phi}$ exceptional root system.}
\label{table:exceptional}
\end{table}  
Put $\Sigma^- := -\Sigma^+$ for the set of negative roots. Note that the roots in the $\Phi$-string of $\lambda$ are exactly the opposite to these in the $\Phi$-string of $-\lambda$, that is, $I_\Phi^\lambda = - I_\Phi^{-\lambda}$. Therefore, despite the fact that before item~(\ref{Main:Theorem:2}) of the Main Theorem we select a positive root $\lambda \in \Sigma^+$, we are still obtaining the $\Phi$-string of $\lambda$ explicitly for any subset $\Phi$ of $\Pi$ and any root $\lambda \in \Sigma$. Furthermore, if $\Sigma$ is a $G_2$ root system, then any non-trivial proper subset $\Phi$ of $\Pi$ would contain exactly one root, and this case is treated in~\cite[Proposition~2.48]{K}. 

The idea in order to prove the Main Theorem is the following. Let $\Phi$ be a subset of the set $\Pi$ of simple roots of a root system $\Sigma$. Note that $\Phi$ generates a root subsystem $\Sigma_\Phi$ of $\Sigma$ for which $\Phi$ constitutes a simple system. Let $\lambda \in \Sigma^+$ be a root, not spanned by $\Phi$, of minimum level in its $\Phi$-string. First, we see that $\spann_{\mathbb{Z}} (\{\lambda\} \cup \Phi) \cap \Sigma$ constitutes a root subsystem $\Sigma_{\lambda, \Phi}$ of $\Sigma$, for which $\Pi_{\lambda, \Phi} = \{ \lambda \} \cup \Pi$ is a simple system, using standard techniques of root systems (see Proposition~\ref{proposition:root:system:lambda:phi}). But then, calculating the roots in the $\Phi$-string of $\lambda$ turns out to be equivalent to calculating the positive roots in $\Sigma_{\lambda, \Phi}$ whose coefficient corresponding to $\lambda$ with respect to $\Pi_{\lambda, \Phi}$ is equal to one. Therefore, assuming that $\Phi$ is connected, we start a case-by-case analysis on the possibilities for the pair $(\Sigma_\Phi, \Sigma_{\lambda, \Phi})$. Note that the Dynkin diagram of $\Sigma_\Phi$ has to be subdiagram of the Dynkin diagram correspoding to $\Sigma_{\lambda, \Phi}$. For instance, if $\Sigma_\Phi$ were an $A_n$ root system, then $\Sigma_{\lambda, \Phi}$ could be of type $A_{n+1}$, $B_{n+1}$, $BC_{n+1}$, $C_{n+1}$, $D_{n+1}$ or $E_{n+1}$ (if $n \in \{ 5, 6 , 7\}$). However, if $\Sigma_\Phi$ is of $B_n$ type, then $\Sigma_{\lambda, \Phi}$ could be either of type $B_{n+1}$, or of type $F_4$ with $n=3$. We later address the case when $\Phi$ is a non-connected subset. Finally, we associate an oriented graph to the $\Phi$-string of $\lambda$, for any $\lambda$ not spanned by $\Phi$, which permits to understand better its configuration.

Although abstract root systems arise in connection with Lie algebra theory, our source of interest stems from differential geometry or, more precisely, from the investigation of extrinsically homogeneous submanifolds in symmetric spaces of non-compact type. Roughly speaking, these are submanifolds whose geometric properties are the same at any point. More precisely, let $M$ be a symmetric space of non-compact type and let $\g{g}$ be the Lie algebra of the connected component of the identity of its isometry group $G = I^0(M)$. Then, $\g{g}$ admits the so-called \emph{root space decomposition}, namely $\g{g} = \g{g}_0 \oplus  \bigl(\bigoplus_{\lambda \in \Sigma} \g{g}_\lambda \bigr)$, and $\Sigma$ happens to be an abstract root system. Furthermore, $M$ turns out to be isometric to a solvable Lie group $AN$, which is the semidirect product of an abelian Lie group $A$ with Lie algebra $\g{a}$ and a nilpotent Lie group $N$, both Lie subgroups of $G$. The Lie algebra of $N$ is $\g{n} = \bigoplus_{\lambda \in \Sigma^+} \g{g}_\lambda$, that is, the sum of root spaces associated with the positive roots.

All in all, important family of homogeneous submanifolds of $M \cong AN$ are achieved as orbits in $M$ of Lie subgroups of $AN$. Indeed, let $S$ be a Lie subgroup of $AN$. We do not enter in detail, but we can study the tangent space of certain orbits of $S$ in $M$ by means of the Lie algebra $\g{s}$ of $S$, which is a subalgebra of of $\g{a} \oplus \g{n}$. The Levi-Civita connection of $M \cong AN$, which is the main tool to investigate many geometric properties, can be expressed in terms of the Lie bracket, and we have $[\g{g}_\alpha, \g{g}_\beta] \subset \g{g}_{\alpha+\beta}$ for any $\alpha$, $\beta \in \Sigma$. Thus, at first glance, it seems to relate root spaces associated with positive roots between them in an intricate manner which is difficult to control. But $\Phi$-strings induce decompositions of the tangent space into subspaces which are invariant for the Levi-Civita connection. Since $\Phi$-strings can just adopt a couple of configurations, as follows from the Main Theorem (see Table~\ref{table:classical} and Table~\ref{table:exceptional}), they enable to calculate explicitly the extrinsic geometry of important families of homogeneous submanifolds in symmetric spaces of non-compact type.

Indeed, if $\Phi$ contains exactly two roots, $\Phi$-strings were studied and determined in~\cite{BS18} in order to produce, describe and classify certain homogeneous submanifolds of symmetric spaces of non-compact type with constant principal curvatures. They were also used in~\cite{DST20} in order to calculate the extrinsic geometry of certain homogeneous hypersurfaces of symmetric spaces of non-compact type that are Ricci solitons when they are considered with the induced metric. Furthermore, we are using them \emph{to conclude the classification of cohomogeneity one actions in symmetric spaces of non-compact type}, where $\Phi$-strings are also utilized to understand better the theory of parabolic Lie algebras, which has played a crucial role in the development of the problem~\cite{BT13}.

\medskip

This paper is organized as follows. In Section~\ref{section:preliminaries} we recall the notion of abstract root system together with some well known results concerning them. Take $\Phi$ a subset of $\Pi$ and $\lambda \in \Sigma^+$ not spanned by $\Phi$, and of minimum level in its $\Phi$-string. Section~\ref{section:structure} is mainly devoted to prove that $\{\lambda\} \cup \Phi$ is a simple system for the root subsystem $\Sigma_{\lambda, \Phi}$ of $\Sigma$ that it spans. Thus, the $\Phi$-string of $\lambda$ can be thought of inside the root system $\Sigma_{\lambda, \Phi}$. This perspective allows us to generalize the fact that the root of minimum level is unique, and that $\Phi$-strings do not have gaps (see~\cite[Proposition~2.48~(g)]{K}). Moreover, it allows us to calculate any $\Phi$-string of $\lambda$ by means of a case-by-case analysis of the pair $(\Sigma_\Phi, \Sigma_{\lambda, \Phi})$, when $\Phi$ is connected. First, we assume that $\Phi$ is connected. On the one hand, in Section~\ref{section:classical:root:system} we address all the cases when $\Sigma_{\lambda, \Phi}$ is a classical root system. On the other hand, in Section~\ref{section:exceptional:root:system}, we focus on the cases when $\Sigma_{\lambda, \Phi}$ is an exceptional root system. In Section~\ref{section:non:connected:phi}, we address the case when $\Phi$ is a non-connected subset of $\Pi$. Finally, in Section~\ref{section:diagrams} we include the diagrams of the strings.

\medskip
I would like to thank J. C. Díaz-Ramos, M. Domínguez-Vázquez and H. Tamaru for their interesting comments and remarks on several aspects of this work.
\section{Preliminaries}\label{section:preliminaries}
In this section, we introduce the necessary preliminaries for the development of the article. We start by presenting the notion of abstract root system, and also several well-known results related to them.

Following~\cite[p. 149]{K}, an \emph{abstract root system} $\Sigma$, in a finite dimensional real vector space $V$ equipped with an inner product $\langle \cdot, \cdot \rangle$ and norm $| \cdot |$, is a finite subset of non-zero elements of $V$ such that:
\begin{enumerate}[{\rm (a)}]
\item $V=\spann\Sigma$,
\item the number $A_{\alpha,\beta} =2\langle\alpha,\beta\rangle/ |\alpha|^2$ is an integer for any $\alpha,\beta\in \Sigma$, 
\item $\beta-A_{\alpha, \beta}\,\alpha\in \Sigma$ for any $\alpha,\beta\in \Sigma$.
\end{enumerate}
An abstract root system is said to be \emph{reduced} if $\lambda\in\Sigma$ implies that $2\lambda\notin \Sigma$. Otherwise, it is said to be \emph{non-reduced}. The notation $|\cdot|$ will be also used to denote the cardinal of certain sets.

Now, we can define a positivity criterion on $\Sigma$ by declaring those roots that lie at one of the two half-spaces determined by a hyperplane in $V$ not containing any root to be positive. If $\Sigma^+$ denotes the set of positive roots, then $\Sigma=\Sigma^+\cup \Sigma^-$, where $\Sigma^- := - \Sigma^+$ denotes the set of negative roots. 

We define here the concept of string~\cite[p.~152]{K}, whose generalization will be the central concept of this work. Let $\alpha \in \Sigma$ and $\lambda \in \Sigma \cup \{0\}$. The $\alpha$-\emph{string} containing $\lambda$ is defined as the set of all elements in $\Sigma \cup \{0\}$ of the form $\lambda + n \alpha$, with $n \in \mathbb{Z}$. We state now a result concerning the algebraic structure of the root system $\Sigma$. In particular, it provides really useful information about the Cartan integers, that is, the integers of the form
\[
A_{\alpha, \lambda} = \frac{2 \langle \alpha, \lambda \rangle}{|\alpha|^2},
\]
where $\alpha$, $\lambda \in \Sigma$. The calculation of Cartan integers allows to control how roots are constructed, and, in particular, it allows to determine strings explicitly. 

\begin{proposition}\label{proposition:strings}\emph{\cite[Proposition 2.48]{K}}
Let $\Sigma$ be an abstract root system in the inner product space $V$. Then:
\begin{enumerate}[{\rm (i)}]
\item If $\alpha \in\Sigma$, then $-\alpha \in \Sigma$. \label{proposition:strings:1}
\item If $\alpha \in \Sigma$ and $\lambda \in \Sigma \cup \{0\}$, then
\[
A_{\alpha,\lambda} = \frac{2 \langle \lambda, \alpha \rangle}{|\alpha|^2} \in \{0, \pm 1, \pm 2, \pm 3, \pm 4\},
\]
and $\pm 4$ occurs only when $\Sigma$ is non-reduced  and  $\lambda = \pm 2 \alpha$.\label{proposition:strings:2}
\item If $\alpha,\lambda\in\Sigma$ are non-proportional and $|\lambda| \leq |\alpha|$, then $A_{\alpha,\lambda} \in \{0, \pm 1\}$. \label{proposition:strings:3}
\item If $\alpha,\lambda \in \Sigma$ with $\langle \alpha, \lambda \rangle > 0$, then $\alpha - \lambda \in \Sigma \cup \{0\}$. If $\alpha,\lambda\in\Sigma$ with $\langle \alpha, \lambda \rangle < 0$, then $\alpha + \lambda \in \Sigma \cup \{0\}$. \label{proposition:strings:4}
\item If $\alpha\in\Sigma$ and $\lambda\in\Sigma \cup \{0\}$, then the $\alpha$-string containing $\lambda$ has the form $\lambda + n \alpha$ for $-p \leq n \leq q$ with $p, q \geq 0$. There are no gaps. Furthermore $p-q = A_{\alpha,\lambda}$. The $\alpha$-string containing $\lambda$ contains at most four roots. \label{proposition:strings:5}
\end{enumerate}
\end{proposition}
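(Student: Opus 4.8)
The plan is to derive all five assertions from the three defining axioms together with the integrality of the Cartan numbers, treating (i)--(iv) as preparatory facts that feed into the structural statement (v). For (i) I would apply axiom (c) with $\beta=\alpha$: since $A_{\alpha,\alpha}=2$, the axiom yields $\alpha-2\alpha=-\alpha\in\Sigma$. The common engine for (ii) and (iii) is the product identity
\[
A_{\alpha,\lambda}\,A_{\lambda,\alpha}=\frac{4\langle\alpha,\lambda\rangle^{2}}{|\alpha|^{2}|\lambda|^{2}}=4\cos^{2}\theta,
\]
where $\theta$ is the angle between $\alpha$ and $\lambda$. Both factors are integers of the same sign (that of $\langle\alpha,\lambda\rangle$), so their product is a non-negative integer bounded above by $4$ via Cauchy--Schwarz, with equality exactly when $\alpha$ and $\lambda$ are proportional.

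For (ii), the proportional case $\lambda=c\alpha$ forces $2c,\,2/c\in\mathbb{Z}$ with $(2c)(2/c)=4$, so $c\in\{\pm\tfrac12,\pm1,\pm2\}$, and $A_{\alpha,\lambda}=2c=\pm4$ occurs only for $\lambda=\pm2\alpha$, which requires $\Sigma$ non-reduced; the non-proportional case gives product at most $3$, hence $|A_{\alpha,\lambda}|\le3$. For (iii), when $\alpha,\lambda$ are non-proportional the product is at most $3$, and since $A_{\alpha,\lambda}/A_{\lambda,\alpha}=|\lambda|^{2}/|\alpha|^{2}\le1$ under $|\lambda|\le|\alpha|$ we get $A_{\alpha,\lambda}^{2}\le A_{\alpha,\lambda}A_{\lambda,\alpha}\le3$, so $A_{\alpha,\lambda}\in\{0,\pm1\}$.

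For (iv), I would first treat $\langle\alpha,\lambda\rangle>0$ with $\alpha,\lambda$ non-proportional: then $A_{\alpha,\lambda}$ and $A_{\lambda,\alpha}$ are positive integers of product at most $3$, so at least one equals $1$. If $A_{\lambda,\alpha}=1$, axiom (c) gives $\alpha-\lambda\in\Sigma$; if $A_{\alpha,\lambda}=1$, it gives $\lambda-\alpha\in\Sigma$, and then (i) yields $\alpha-\lambda\in\Sigma$. The proportional subcases $\lambda=c\alpha$ with $c\in\{\tfrac12,1,2\}$ are checked by hand, producing $\lambda$, $0$, and $-\alpha$ respectively, all in $\Sigma\cup\{0\}$. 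The statement for $\langle\alpha,\lambda\rangle<0$ follows by applying the previous case to $-\lambda$ using (i).

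Finally, (v) is assembled in three steps. \emph{No gaps}: if the string had a gap, I would select a genuine root $\beta=\lambda+r\alpha$ whose successor $\beta+\alpha$ is absent and a genuine root $\gamma=\lambda+s\alpha$ with $s\ge r+2$ whose predecessor $\gamma-\alpha$ is absent (both $\beta,\gamma\neq0$, since $0$ in the string would force a neighbour to lie in $\Sigma$); the contrapositive of (iv) then forces $\langle\beta,\alpha\rangle\ge0$ and $\langle\gamma,\alpha\rangle\le0$, contradicting $\langle\gamma-\beta,\alpha\rangle=(s-r)|\alpha|^{2}>0$. \emph{The relation $p-q=A_{\alpha,\lambda}$}: the reflection $s_\alpha$ preserves $\Sigma\cup\{0\}$ and sends $\lambda+n\alpha$ to $\lambda-(A_{\alpha,\lambda}+n)\alpha$, hence reverses the gapless index interval $[-p,q]$ about $-A_{\alpha,\lambda}/2$, and matching endpoints gives $p-q=A_{\alpha,\lambda}$. \emph{The length bound}: for the top element $\beta=\lambda+q\alpha$ one computes $A_{\alpha,\beta}=A_{\alpha,\lambda}+2q=p+q\ge0$; if $\lambda$ (hence the whole string) is non-proportional to $\alpha$, then by (ii) $A_{\alpha,\beta}\le3$, so $p+q\le3$ and, as no element of the string is $0$, there are at most four roots, while the proportional case reduces to the rank-one systems $A_1$ or $BC_1$ by inspection. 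The main obstacle I anticipate is the careful bookkeeping in the non-reduced case: the value $\pm4$ and the proportional ($BC$-type) strings are genuine exceptions that must be isolated throughout, most delicately in the final length count, where $0$ may belong to the string and must not be counted as a root.
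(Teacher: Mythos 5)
Your proof is correct: the Cauchy--Schwarz bound $A_{\alpha,\lambda}A_{\lambda,\alpha}=4\cos^2\theta\le 4$ on the integer Cartan products, the use of axiom (c) for (i) and (iv), the two-sided contrapositive argument for gaplessness, the involution $s_\alpha\colon \lambda+n\alpha\mapsto \lambda-(A_{\alpha,\lambda}+n)\alpha$ matching the endpoints of the index interval, and the separate treatment of the proportional ($A_1$/$BC_1$) strings all go through, including the non-reduced bookkeeping where $0$ may lie in the string. The paper itself gives no proof of this proposition --- it is imported from \cite[Proposition~2.48]{K} --- and your argument is essentially the standard one found there, so there is nothing to flag.
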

As it is usual in the theory of root systems, one can consider a subset $\Pi\subset \Sigma^+$ of simple roots. A positive root is said to be \emph{simple} if it cannot be written as the sum of two positive roots. The set of simple roots $\Pi$ is a basis of $V$ made of positive roots. Furthermore, we have the following 

\begin{proposition}\label{proposition:simple:basis}\emph{\cite[Proposition 2.49]{K}}
Let $n = \dim V$. Then, there are $n$ simple roots and they are linearly independent. If $\lambda$ is a root and it is written as $\lambda = \sum_{\alpha \in \Pi }n_{\alpha} \alpha$, then all the coefficients $n_\alpha$, with $\alpha \in \Pi$, are either non-negative integers (if $\lambda$ is a positive root), or non-positive integers (if $\lambda$ is a negative root).
\end{proposition}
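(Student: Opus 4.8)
The plan is to establish the three assertions of the statement in sequence: first that distinct simple roots meet at an obtuse (or right) angle, then that $\Pi$ is linearly independent, then the integrality and sign constraints on the coefficients of an arbitrary root, and finally the count $|\Pi| = n$. The engine throughout is Proposition~\ref{proposition:strings}, together with the separating functional that defines positivity.

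First I would prove the auxiliary fact that if $\alpha, \beta \in \Pi$ are distinct, then $\langle \alpha, \beta \rangle \leq 0$. Indeed, if $\langle \alpha, \beta \rangle > 0$, then Proposition~\ref{proposition:strings}~(\ref{proposition:strings:4}) gives $\alpha - \beta \in \Sigma \cup \{0\}$; since $\alpha \neq \beta$, this is a nonzero root, and whichever of $\pm(\alpha - \beta)$ is positive exhibits $\alpha$ or $\beta$ as a sum of two positive roots, contradicting the simplicity of one of them. Next, for linear independence, fix a vector $\gamma \in V$ realizing the positivity criterion, so that $\langle \gamma, \mu \rangle > 0$ for every $\mu \in \Sigma^+$. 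Suppose $\sum_{\alpha \in \Pi} c_\alpha \alpha = 0$, and separate the terms with $c_\alpha > 0$ from those with $c_\alpha < 0$ by writing $v = \sum_{c_\alpha > 0} c_\alpha \alpha = \sum_{c_\beta < 0} (-c_\beta) \beta$. Using the obtuseness just established, $|v|^2 = \langle v, v \rangle$ expands as a sum of terms $c_\alpha(-c_\beta)\langle \alpha, \beta \rangle \leq 0$, forcing $v = 0$; pairing with $\gamma$ then shows that every $c_\alpha$ vanishes, because each $\langle \gamma, \alpha \rangle$ is strictly positive. Hence $\Pi$ is linearly independent.

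I would then show, by induction on the finite set of values $\langle \gamma, \lambda \rangle$ with $\lambda \in \Sigma^+$, that every positive root is a non-negative integer combination of simple roots. A simple root is trivially of this form; a non-simple positive root decomposes by definition as $\lambda = \mu + \nu$ with $\mu, \nu \in \Sigma^+$, and since $\langle \gamma, \lambda \rangle = \langle \gamma, \mu \rangle + \langle \gamma, \nu \rangle$ with both summands positive, each of $\mu, \nu$ has strictly smaller $\gamma$-height, so the inductive hypothesis applies and the integrality and non-negativity are inherited by the sum. Negating yields the corresponding statement for negative roots. Finally, since $V = \spann \Sigma$ by axiom~(a) and every root lies in the $\mathbb{Z}$-span of $\Pi$, the simple roots span $V$; combined with their linear independence this makes $\Pi$ a basis, whence $|\Pi| = \dim V = n$.

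The step I expect to be the main obstacle is the linear independence argument: it is the only place where the geometry is genuinely used, combining the obtuseness of distinct simple roots with the separating functional $\gamma$ to squeeze $v$ to zero. The obtuseness lemma is an immediate consequence of Proposition~\ref{proposition:strings}~(\ref{proposition:strings:4}), and the coefficient statement is a routine height induction, so the crux is packaging the positivity data into the inner-product estimate that kills the putative relation.
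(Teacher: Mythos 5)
Your proof is correct, and in fact the paper contains no proof of this statement at all: it is quoted directly from Knapp \cite[Proposition~2.49]{K}, and your argument is essentially the standard one given there — obtuseness of distinct simple roots via Proposition~\ref{proposition:strings}~(\ref{proposition:strings:4}), linear independence by splitting a putative relation into its positive and negative parts and testing against the separating functional $\gamma$ furnished by the hyperplane defining positivity, and induction on the $\gamma$-height (legitimate since $\Sigma^+$ is finite) for the integrality and sign of the coefficients, with the count $|\Pi| = n$ following because the roots span $V$. No gaps.
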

For each root $\lambda = \sum_{\alpha \in \Pi }n_{\alpha} \alpha \in \Sigma$, the sum 
\begin{equation}\label{definition:level}
l(\lambda) = \sum_{\alpha \in \Pi} n_{\alpha}
\end{equation}
is called the \emph{level} of the root $\lambda$. Note that positive roots have positive level and negative roots have negative level. 
Any positive root of level at least two can be constructed as the sum of a positive root and a simple root, as we make more precise in the following

\begin{lemma}\label{exercise}\emph{\cite[p. 204, Exercise 7]{K}}
Let $\Pi$ be a set of simple roots of a root system $\Sigma$. Any $\lambda \in \Sigma^+$ can be written in the form 
\[
\lambda=\lambda_{i_1}+\lambda_{i_2} + \dots +\lambda_{i_k},
\]
where $\lambda_{i_j} \in \Pi$ and each partial summand from the left is in $\Sigma^+$.
\end{lemma}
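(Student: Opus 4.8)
The plan is to argue by induction on the level $l(\lambda)$ defined in~\eqref{definition:level}. If $l(\lambda)=1$, then $\lambda$ is itself a simple root, and the single-term expression $\lambda=\lambda_{i_1}$ with $\lambda_{i_1}=\lambda\in\Pi$ already has the required form, so the base case is immediate. The whole content therefore lies in the inductive step: assuming the result for every positive root of level strictly less than $l(\lambda)$, and given $\lambda\in\Sigma^+$ with $l(\lambda)\geq 2$, I would produce a simple root $\alpha\in\Pi$ such that $\lambda-\alpha$ is again a positive root, and then append $\alpha$ to the decomposition supplied by the inductive hypothesis.

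The heart of the argument is the existence of such an $\alpha$. Writing $\lambda=\sum_{\beta\in\Pi}n_\beta\beta$ with all $n_\beta\geq 0$ by Proposition~\ref{proposition:simple:basis}, I would expand $0<\langle\lambda,\lambda\rangle=\sum_{\beta\in\Pi}n_\beta\langle\lambda,\beta\rangle$, using that $\lambda$ is a nonzero vector. Since every coefficient $n_\beta$ is non-negative while the total sum is strictly positive, at least one summand must be strictly positive; that single summand forces both $n_\alpha>0$ and $\langle\lambda,\alpha\rangle>0$ for the corresponding $\alpha\in\Pi$. Proposition~\ref{proposition:strings}~(\ref{proposition:strings:4}), applied with $\lambda$ and $\alpha$ in the respective roles, then yields $\lambda-\alpha\in\Sigma\cup\{0\}$.

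It remains to verify that $\lambda-\alpha$ is in fact a \emph{positive} root. Subtracting the simple root $\alpha$ alters only the $\alpha$-coefficient, lowering it from $n_\alpha$ to $n_\alpha-1\geq 0$, so $\lambda-\alpha$ still has all coefficients non-negative. It is nonzero because $l(\lambda-\alpha)=l(\lambda)-1\geq 1$, so by Proposition~\ref{proposition:simple:basis} it cannot be a negative root, and hence $\lambda-\alpha\in\Sigma^+$. The inductive hypothesis applied to $\lambda-\alpha$ gives an expression $\lambda-\alpha=\lambda_{i_1}+\dots+\lambda_{i_{k-1}}$ all of whose left partial sums are positive roots; setting $\lambda_{i_k}:=\alpha$ produces $\lambda=\lambda_{i_1}+\dots+\lambda_{i_{k-1}}+\alpha$, whose left partial sums are exactly those of $\lambda-\alpha$ together with the final full sum $\lambda$, all of which are positive. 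This closes the induction.

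The only delicate point I anticipate is securing a \emph{good} simple root $\alpha$: one must use the strict positivity of $\langle\lambda,\lambda\rangle$ to extract an $\alpha$ that simultaneously satisfies $\langle\lambda,\alpha\rangle>0$ and actually appears in $\lambda$ with $n_\alpha>0$. The second condition is what guarantees that subtracting $\alpha$ does not introduce a negative coefficient and thereby spoil positivity; fortunately, as noted above, the same strictly positive summand delivers both conditions at once. Once this is in place, the remainder is a routine application of the cited propositions.
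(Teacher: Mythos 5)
Your proof is correct and follows essentially the same route as the paper's: induction on the level, extracting a simple root $\alpha\in\Pi$ with $\langle\lambda,\alpha\rangle>0$ from the strict positivity of $\langle\lambda,\lambda\rangle=\sum_{\beta\in\Pi}n_\beta\langle\lambda,\beta\rangle$, and subtracting it via Proposition~\ref{proposition:strings}~(\ref{proposition:strings:4}). Your additional step of securing $n_\alpha>0$ to check coefficientwise that $\lambda-\alpha$ stays positive is harmless but not needed, since (as the paper implicitly uses) once $\lambda-\alpha$ is known to be a nonzero root, its level $l(\lambda)-1\geq 1$ already forces it to be positive by Proposition~\ref{proposition:simple:basis}.
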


\begin{proof}
We proceed by induction on the level $l(\lambda)$ of the root $\lambda \in \Sigma^{+}$. Recall from Proposition~\ref{proposition:simple:basis} that $\lambda$ can be written as $\lambda = \sum_{\alpha \in \Pi} n_{\alpha} \alpha$, for some integers $n_{\alpha} \geq 0$, for each $\alpha \in \Pi$. The claim is obvious if $l(\lambda) = 1$. Assume that it is true for level $k > 1$ and let $\lambda \in \Sigma^+$ be a positive root satisfying $l(\lambda) = k+1$. If $\langle \lambda, \alpha \rangle \leq 0$ for all $\alpha \in \Pi$, then we would have
\[
0 < \langle \lambda, \lambda \rangle = \langle \lambda, \sum_{\alpha \in \Pi} n_{\alpha} \alpha \rangle \leq 0,
\]
which is a contradiction. Thus, there exists $\alpha \in \Pi$ such that $\langle \lambda, \alpha \rangle > 0$. Now, using Proposition~\ref{proposition:strings}~(\ref{proposition:strings:4}), $l(\lambda) > 2$ and $l(\alpha) = 1$, we deduce that $\lambda - \alpha \in \Sigma$. Hence, $\lambda = (\lambda - \alpha) + \alpha$ and the result follows by applying the induction hypothesis to $\lambda - \alpha$, which has level $k$. 
\end{proof}

The set $\Pi$ of simple roots allows to construct the Dynkin diagram associated with the root system $\Sigma$, which is a graph whose nodes correspond to the simple roots. The nodes corresponding to the simple roots $\alpha$, $\beta \in \Pi$ are joined by $A_{\alpha, \beta} \cdot A_{\beta, \alpha}$ edges. Dynkin diagrams were utilized to classify complex semisimple Lie algebras. See~\cite[Chapter~2]{K} for more details on Dynkin diagrams.

 A subset $\Phi$ of the set of simple roots $\Pi$ is said to be \emph{connected} if it cannot be expressed as a non-trivial union $\Phi_1 \cup \Phi_2$ where any element in $\Phi_1$ is orthogonal to any element in $\Phi_2$, or, equivalently, if there is a connected subgraph of the Dynkin diagram of $\Pi$ whose nodes correspond precisely to the roots in $\Phi$. Moreover, a root system $\Sigma$ is said to be \emph{irreducible} if its Dynkin diagram is connected, and~\emph{reducible} otherwise (see~\cite[p.~160]{K}). We finish this section with some auxiliary results, which are probably known, but we did not find an appropriate reference in the literature.

\begin{lemma}\label{lemma:sum:orthogonal:subsets}
Let $\Phi_0$, $\Phi_1$ be orthogonal subsets of $\Pi$. Let $\lambda_0$, $\lambda_1 \in \Sigma$ be roots spanned by $\Phi_0$ and $\Phi_1$, respectively. Then $\pm \lambda_0 \pm \lambda_1$ cannot be roots. 
\end{lemma}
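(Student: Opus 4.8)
The plan is to reduce the four sign combinations to a single statement and then exploit the interplay between orthogonality of \emph{simple} roots (which forces disjoint supports) and the same-sign constraint on the coefficients of a root from Proposition~\ref{proposition:simple:basis}. First I would observe that, since $\Phi_0$ and $\Phi_1$ are orthogonal subsets of $\Pi$, the subspaces $\spann\Phi_0$ and $\spann\Phi_1$ are orthogonal; hence $\langle\lambda_0,\lambda_1\rangle=0$, and moreover $\Phi_0\cap\Phi_1=\emptyset$, so the expansions of $\lambda_0$ and $\lambda_1$ with respect to $\Pi$ have disjoint supports. Because $-\lambda_i$ is again a root spanned by $\Phi_i$ (Proposition~\ref{proposition:strings}~(\ref{proposition:strings:1})), it suffices to prove that $\nu_0+\nu_1\notin\Sigma$ for arbitrary nonzero roots $\nu_0\in\spann\Phi_0\cap\Sigma$ and $\nu_1\in\spann\Phi_1\cap\Sigma$; the four cases $\pm\lambda_0\pm\lambda_1$ are then recovered by suitable choices of sign.

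For the contradiction, suppose $\mu:=\nu_0+\nu_1\in\Sigma$. Note that $\mu\neq0$, since $\nu_0=-\nu_1$ would place a nonzero vector in $\spann\Phi_0\cap\spann\Phi_1=\{0\}$. Writing $\mu$ over $\Pi$, its block of coefficients on $\Phi_0$ is that of $\nu_0$ and its block on $\Phi_1$ is that of $\nu_1$ (disjoint supports), and both blocks are nonzero. By Proposition~\ref{proposition:simple:basis} all coefficients of the root $\mu$ share a single sign, so $\nu_0$ and $\nu_1$ must have coefficients of the same sign; replacing $(\nu_0,\nu_1)$ by $(-\nu_0,-\nu_1)$ if necessary, which only turns $\mu$ into $-\mu\in\Sigma$, I may assume $\nu_0,\nu_1\in\Sigma^+$.

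The decisive step is to produce the root $\nu_1-\nu_0$. Since $\langle\nu_0,\nu_1\rangle=0$, the Cartan integer $A_{\nu_0,\nu_1}$ vanishes, so by Proposition~\ref{proposition:strings}~(\ref{proposition:strings:5}) the $\nu_0$-string through $\nu_1$ is symmetric and gapless, of the form $\nu_1+n\nu_0$ with $-p\le n\le p$. As $\mu=\nu_1+\nu_0$ is assumed to be a root we have $p\ge1$, whence the element with $n=-1$, namely $\nu_1-\nu_0$, also lies in $\Sigma$. But $\nu_1-\nu_0$ carries strictly positive coefficients on the nonempty support of $\nu_1$ inside $\Phi_1$ and strictly negative coefficients on the support of $\nu_0$ inside $\Phi_0$, so its coefficients do not all share a sign, contradicting Proposition~\ref{proposition:simple:basis}. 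This contradiction shows $\mu\notin\Sigma$ and completes the argument.

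I expect the main obstacle to be recognizing that purely metric information is \emph{not} enough: the configuration $\{\pm\nu_0,\pm\nu_1,\nu_0\pm\nu_1\}$ is a perfectly legitimate $B_2$-subsystem, so no bound on Cartan integers or norms can exclude $\mu$ on its own (indeed one checks $A_{\nu_0,\mu}=2$, which is admissible, and the lengths are forced to satisfy $|\nu_0|=|\nu_1|$ rather than anything contradictory). Orthogonality must therefore be combined with the hypothesis that $\Phi_0,\Phi_1$ consist of \emph{simple} roots: it is precisely the resulting disjointness of supports, fed into the sign rule of Proposition~\ref{proposition:simple:basis} through the auxiliary root $\nu_1-\nu_0$, that breaks the symmetry of the string and yields the contradiction.
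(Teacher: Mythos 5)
Your proof is correct, but it follows a genuinely different route from the paper's. The paper argues structurally: it observes that $\Sigma' = \spann_{\mathbb{Z}}(\Phi_0 \cup \Phi_1) \cap \Sigma$ is a root subsystem with simple system $\Phi_0 \cup \Phi_1$, whose Dynkin diagram is disconnected, so that $\Sigma'$ is reducible and splits as the disjoint union of the orthogonal subsystems $\Sigma_i = \spann_{\mathbb{Z}}(\Phi_i) \cap \Sigma$; any root $\pm\lambda_0 \pm \lambda_1$ in $\Sigma'$ would then have to lie in $\Sigma_0$ or $\Sigma_1$, yet it is orthogonal to neither. You instead argue locally: after normalizing signs via Proposition~\ref{proposition:simple:basis} so that $\nu_0, \nu_1 \in \Sigma^+$, you use $A_{\nu_0,\nu_1}=0$ and the gapless, symmetric string of Proposition~\ref{proposition:strings}~(\ref{proposition:strings:5}) to manufacture the element $\nu_1 - \nu_0$, whose mixed-sign coefficients over $\Pi$ (here the disjointness of supports, i.e.\ the \emph{simplicity} hypothesis, enters) violate the sign rule. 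The paper's proof buys more than the lemma states — the full splitting $\Sigma' = \Sigma_0 \sqcup \Sigma_1$, which is implicitly convenient elsewhere — at the cost of invoking the reducibility theory for disconnected Dynkin diagrams; your argument is more elementary and self-contained, using only the two quoted propositions, and your closing remark about the $B_2$ configuration $\{\pm\nu_0, \pm\nu_1, \nu_0 \pm \nu_1\}$ correctly identifies why orthogonality alone cannot suffice and where simplicity is indispensable. One pedantic point: the string through $\nu_1$ lives in $\Sigma \cup \{0\}$, so to conclude $\nu_1 - \nu_0 \in \Sigma$ you should note $\nu_1 - \nu_0 \neq 0$, which is immediate since $\nu_0$ and $\nu_1$ are nonzero with disjoint supports (the same intersection argument you already made for $\mu \neq 0$); this is a trivially fillable omission, not a gap.
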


\begin{proof}
Note that $\Sigma^\prime = \spann_\mathbb{Z}(\Phi_0 \cup \Phi_1) \cap \Sigma$ is a root subsystem of $\Sigma$, for which $\Phi_0 \cup \Phi_1$ is a simple system (after inducing in $\Sigma^\prime$ the positivity criterion of $\Sigma$). Since $\Phi_0$ is orthogonal to $\Phi_1$, the Dynkin diagram of $\Sigma^\prime$ is not connected. This implies that $\Sigma^\prime$ is a reducible root system~\cite[p.~160]{K}, that is, it can be written as the disjoint union of the orthogonal sets $\Sigma_0 = \spann_\mathbb{Z} (\Phi_0) \cap \Sigma$ and $\Sigma_1=\spann_\mathbb{Z} (\Phi_1) \cap \Sigma$. Take $\lambda_k \in \Sigma_k$, for $k \in \{0,1\}$. If $\pm \lambda_0 \pm \lambda_1$ is in $\spann_\mathbb{Z}(\Phi_0 \cup \Phi_1) \cap \Sigma$, then it must belong either to $\Sigma_0$ or to $\Sigma_1$. However $\pm \lambda_0 \pm \lambda_1$ is neither orthogonal to all the elements in $\Sigma_0$ nor to all the elements in $\Sigma_1$.
\end{proof}

\begin{lemma}\label{lemma:sum:connected:diagram}
Let $\Psi$ be a connected subset of the set $\Pi$ of simple roots of the root system $\Sigma$. Then, $\sum_{\alpha \in \Psi} \alpha$ is a positive root of $\Sigma$.
\end{lemma}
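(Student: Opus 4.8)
The plan is to prove the statement by induction on the cardinality $|\Psi|$, after first exploiting connectedness to order the roots of $\Psi$ conveniently. Since the subgraph of the Dynkin diagram spanned by $\Psi$ is connected, I can enumerate $\Psi = \{\alpha_1, \dots, \alpha_k\}$ so that for each $j \in \{2, \dots, k\}$ the node $\alpha_j$ is joined by at least one edge to some node in $\{\alpha_1, \dots, \alpha_{j-1}\}$; such an ordering exists for any connected graph, by building it up along a spanning tree and adding at each step a vertex adjacent to the part already chosen. Writing $\beta_j = \alpha_1 + \dots + \alpha_j$, the goal becomes to show that every $\beta_j$ is a positive root, the case $j = k$ being the assertion of the lemma.

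Before running the induction I would record that distinct simple roots $\alpha$, $\beta \in \Pi$ satisfy $\langle \alpha, \beta \rangle \leq 0$. This follows from Proposition~\ref{proposition:strings}~(\ref{proposition:strings:4}): if instead $\langle \alpha, \beta \rangle > 0$, then $\alpha - \beta \in \Sigma \cup \{0\}$, and since $\alpha$ and $\beta$ are linearly independent (Proposition~\ref{proposition:simple:basis}) this difference is a nonzero root; whichever of $\pm(\alpha - \beta)$ is positive then exhibits $\alpha$ or $\beta$ as a sum of two positive roots, contradicting simplicity. Moreover, adjacency of $\alpha$ and $\beta$ in the Dynkin diagram means $A_{\alpha,\beta} \cdot A_{\beta,\alpha} > 0$, hence $\langle \alpha, \beta \rangle \neq 0$, so adjacent simple roots in fact satisfy $\langle \alpha, \beta \rangle < 0$.

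For the induction, the base case $\beta_1 = \alpha_1 \in \Pi \subset \Sigma^+$ is immediate. Assuming $\beta_j \in \Sigma^+$, I would compute
\[
\langle \beta_j, \alpha_{j+1} \rangle = \sum_{i=1}^{j} \langle \alpha_i, \alpha_{j+1} \rangle.
\]
Every summand is $\leq 0$ by the observation above, and the summand corresponding to a neighbour of $\alpha_{j+1}$ among $\alpha_1, \dots, \alpha_j$ (which exists by the chosen ordering) is strictly negative; hence $\langle \beta_j, \alpha_{j+1} \rangle < 0$. Applying Proposition~\ref{proposition:strings}~(\ref{proposition:strings:4}) to the pair $\beta_j$, $\alpha_{j+1} \in \Sigma$ gives $\beta_{j+1} = \beta_j + \alpha_{j+1} \in \Sigma \cup \{0\}$. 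Since $\beta_{j+1}$ has level $j + 1 \geq 2 > 0$, it is nonzero, and being a non-negative integer combination of simple roots it is a positive root by Proposition~\ref{proposition:simple:basis}. This closes the induction.

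The only genuinely delicate point is the reduction to the convenient ordering: connectedness is used precisely to guarantee that each newly added simple root has a neighbour among the previously chosen ones, which is what forces the strict inequality $\langle \beta_j, \alpha_{j+1} \rangle < 0$ rather than merely $\langle \beta_j, \alpha_{j+1} \rangle \leq 0$. The latter would not suffice, since Proposition~\ref{proposition:strings}~(\ref{proposition:strings:4}) requires a strictly negative inner product to conclude that $\beta_j + \alpha_{j+1}$ is a root.
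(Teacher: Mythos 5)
Your proof is correct, and it takes a genuinely different route from the paper's at the combinatorial step, even though both arguments are inductions on $|\Psi|$ closed by the same tool, Proposition~\ref{proposition:strings}~(\ref{proposition:strings:4}). The paper argues top-down: since Dynkin diagrams contain no loops (citing \cite[Proposition~2.78~(b)]{K}), a connected $\Psi$ has a leaf, i.e.\ a node $\alpha$ adjacent to exactly one $\beta\in\Psi$; then $\Psi\setminus\{\alpha\}$ is still connected, the induction hypothesis makes $\gamma=\sum_{\nu\in\Psi\setminus\{\alpha\}}\nu$ a root, and $A_{\alpha,\gamma}=A_{\alpha,\beta}<0$ because all other summands vanish by non-adjacency (non-adjacency forces orthogonality). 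You argue bottom-up: you order $\Psi$ along a spanning tree so that each new simple root is adjacent to the part already summed, and you obtain the needed strict inequality $\langle\beta_j,\alpha_{j+1}\rangle<0$ not from orthogonality of non-neighbours but from the general fact that distinct simple roots have non-positive inner products — which you correctly derive inline from Proposition~\ref{proposition:strings}~(\ref{proposition:strings:4}) and simplicity — combined with one strictly negative term coming from the guaranteed neighbour. What your route buys: it nowhere uses acyclicity of the Dynkin diagram (it would work verbatim if the adjacency graph of $\Psi$ contained cycles, whereas the paper's leaf need not exist in that generality), at the modest cost of proving the non-positivity fact. What the paper's route buys: with a leaf in hand the Cartan integer computation is immediate and no ordering construction is required. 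Your closing care — ruling out $\beta_{j+1}=0$ via linear independence of $\Pi$ (Proposition~\ref{proposition:simple:basis}) and then deducing positivity from the non-negative coefficients — is a detail the paper leaves implicit, and you handle it correctly.
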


\begin{proof}
We proceed by induction on the number of elements in $\Psi$. If $\Psi$ contains just one root, the result is trivial. Hence, we will assume that $\Psi$ contains $n$ roots and that our claim is true for connected subsets of $\Pi$ of $n-1$ roots. Since $\Psi$ is connected, there must exist $\alpha \in \Psi$ just connected to one root $\beta \in \Psi$ in the Dynkin diagram of $\Sigma$. Otherwise, there is a loop, which is not possible~\cite[Proposition~2.78~(b)]{K}. In other words, there must exist $\alpha$ and $\beta$ in $\Psi$ such that $A_{\alpha, \beta} < 0 $ and $A_{\alpha, \nu} = 0$ for any $\nu \in \Psi \backslash \{ \alpha, \beta \}$. Now, $\Psi \backslash \{ \alpha \}$ is a connected subset of $\Pi$ of $n-1$ elements. Thus, $\gamma = \sum_{\nu \in \Psi \backslash \{ \alpha \}} \nu$ is a root by induction hypothesis. Moreover, $A_{\alpha, \gamma} <0 $ and $\gamma + \alpha = \sum_{\nu \in \Psi } \nu$ is a root by virtue of Proposition~\ref{proposition:strings}~(\ref{proposition:strings:4}).
\end{proof}

From Lemma~\ref{lemma:sum:connected:diagram} and a straightforward calculation with Cartan integers, we get 

\begin{corollary}\label{corollary:an}
Let $\Sigma$ be a root system of type $A_n$, $n \geq 1$, whose Dynkin diagram is 
\vspace{0.2cm}
\begin{center}
\begin{tikzpicture}[scale=1.5]
\draw (1, 0) circle (0.1);
\draw (3, 0) circle (0.1);
\draw (1.1, 0.) -- (1.5, 0.);
\draw (1.7, 0.) -- (1.82, 0.);
\draw (1.94, 0.) -- (2.06, 0.);
\draw (2.18, 0.) -- (2.3, 0.);
\draw (2.5, 0) -- (2.9, 0);
\draw (3., -0.3) node {$\alpha _n$};
\draw (1, -0.3) node {$\alpha _1$};
\end{tikzpicture}
\end{center}
Then $\beta_l = \sum_{i=1}^l \alpha_i$ is a root, with $1 \leq l \leq n$, and $A_{\beta_m, \beta_l} = 1$, with $1 \leq m < l \leq n$.
\end{corollary}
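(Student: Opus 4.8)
The plan is to deduce both assertions from Lemma~\ref{lemma:sum:connected:diagram} together with the standard inner-product data of an $A_n$ root system, exactly as the phrase ``straightforward calculation with Cartan integers'' suggests. For the first assertion, I observe that for each $l$ with $1 \le l \le n$ the set $\{\alpha_1, \dots, \alpha_l\}$ is a connected subchain of the Dynkin diagram of $\Sigma$, hence a connected subset of $\Pi$. Lemma~\ref{lemma:sum:connected:diagram} then immediately yields that $\beta_l = \sum_{i=1}^l \alpha_i$ is a positive root of $\Sigma$, which settles that each $\beta_l \in \Sigma$.

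For the second assertion I would compute the Cartan integer $A_{\beta_m, \beta_l} = 2\langle \beta_m, \beta_l\rangle / |\beta_m|^2$ directly. I first normalize so that every root of the simply laced system $A_n$ has squared length $2$; then the simple roots satisfy $\langle \alpha_i, \alpha_i\rangle = 2$, $\langle \alpha_i, \alpha_{i+1}\rangle = -1$, and $\langle \alpha_i, \alpha_j\rangle = 0$ whenever $|i-j| \ge 2$, these relations being precisely the content of the $A_n$ Dynkin diagram. In particular $|\beta_m|^2 = 2$, since $\beta_m$ is a root of a simply laced system by the first assertion. To obtain the numerator, I write $\beta_l = \beta_m + (\alpha_{m+1} + \dots + \alpha_l)$ for $m < l$, so that $\langle \beta_m, \beta_l\rangle = |\beta_m|^2 + \langle \beta_m, \alpha_{m+1} + \dots + \alpha_l\rangle$. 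Among the roots $\alpha_{m+1}, \dots, \alpha_l$, only $\alpha_{m+1}$ fails to be orthogonal to $\beta_m$, and the only summand of $\beta_m = \alpha_1 + \dots + \alpha_m$ not orthogonal to $\alpha_{m+1}$ is $\alpha_m$; hence $\langle \beta_m, \alpha_{m+1}+\dots+\alpha_l\rangle = \langle \alpha_m, \alpha_{m+1}\rangle = -1$.

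Combining these, $\langle \beta_m, \beta_l\rangle = 2 - 1 = 1$, and therefore $A_{\beta_m, \beta_l} = 2\cdot 1 / 2 = 1$, as claimed. I do not anticipate any genuine obstacle: the argument reduces to a routine bookkeeping of which pairwise inner products of simple roots vanish. The only point requiring mild care is the boundary behaviour---verifying that $\beta_m$ meets $\beta_l - \beta_m$ only through the single adjacent pair $(\alpha_m, \alpha_{m+1})$---which is exactly where the hypothesis $m < l$ enters, guaranteeing that $\alpha_{m+1}$ genuinely occurs in $\beta_l$.
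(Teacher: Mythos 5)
Your proposal is correct and follows exactly the route the paper intends: it invokes Lemma~\ref{lemma:sum:connected:diagram} applied to the connected subchain $\{\alpha_1,\dots,\alpha_l\}$ to see that each $\beta_l$ is a root, and then carries out the ``straightforward calculation with Cartan integers'' (using $|\beta_m|^2=2$ in the simply laced normalization and $\langle\beta_m,\beta_l-\beta_m\rangle=\langle\alpha_m,\alpha_{m+1}\rangle=-1$) to get $A_{\beta_m,\beta_l}=1$. The paper offers no further detail, so your write-up is precisely the argument it leaves to the reader.
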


\section{Structure of strings and general strategy}\label{section:structure}
From now on, let $\Sigma$ be a (possibly non-reduced) abstract root system, let $\Sigma^+$ be the set of positive roots, and let $\Pi$ be the set of simple roots for this criterion of positivity. Let $\Phi$ be a subset of $\Pi$ and let $\lambda$ be a root in $\Sigma \cup \{0\}$. We define the \emph{$\Phi$-string of $\lambda$} as the set of all the elements in $\Sigma \cup \{0\}$ of the form $\lambda + \sum_{\alpha \in \Phi} n_{\alpha} \alpha$, with $n_{\alpha} \in \mathbb{Z}$ for each $\alpha \in \Phi$. In what follows, we write $I_\Phi^\lambda$ to denote the set of elements of the $\Phi$-string of $\lambda$. Recall that the main aim of this paper is to determine the $\Phi$-string of $\lambda$ explicitly, for any abstract root system $\Sigma$, any subset $\Phi$ of simple roots, and any root $\lambda \in \Sigma$.

In this regard, this section is completely devoted to study the structure of these $\Phi$-strings, in order to design a strategy that will allow to determine all of them by means of a case-by-case investigation. Let us be more precise. As above, let $\Phi$ be a subset of $\Pi$ and $\lambda$ a root in $\Sigma$. Under certain extra circumstances (see Proposition~\ref{proposition:root:system:lambda:phi}), we will prove that $\Phi \cup \{\lambda\}$ constitutes a simple system for the root subsystem of $\Sigma$ that it spans. All the roots of the $\Phi$-string of $\lambda$ will be in this root subsystem spanned by $\Phi \cup \{\lambda\}$. This will allow us to reduce the determination of $\Phi$-strings in $\Sigma$ to a simpler computation in a smaller root system whose type can be deduced by the initial choice of $\Phi$.

\subsection{Structure of strings}
Let $\Phi$ be a subset of the set $\Pi$ of simple roots. Thus $\Phi$ spans a root subsystem of $\Sigma$, which we denote by $\Sigma_\Phi = \spann_\mathbb{Z} (\Phi) \cap \Sigma$. We take in $\Sigma_\Phi$ the positivity criterion of $\Sigma$, that is, $\Sigma_{\Phi}^{+} = \Sigma^{+ } \cap \Sigma_\Phi$. Now, $\Phi$ constitutes a simple system for $\Sigma_\Phi$. Moreover, we denote by $\Sigma^{\Phi}$ the set of positive roots of $\Sigma$ not spanned by $\Phi$, that is, $\Sigma^\Phi = \Sigma^{+} \backslash  \Sigma^{+}_{\Phi}$.

If $\lambda \in \Sigma$ is a root spanned by $\Phi$, $\lambda \in \Sigma_\Phi$, then the $\Phi$-string of $\lambda$ consists of the root subsystem $\Sigma_\Phi$ of $\Sigma$. Therefore, from now on we will assume that $\Phi$ is a proper subset of $\Pi$ and that $\lambda$ is not spanned by $\Phi$. Moreover, we can and will assume that $\lambda$ belongs to $\Sigma^{\Phi}$, that is, $\lambda$ is a positive root of $\Sigma$ not spanned by $\Phi$. This is possible because the $\Phi$-string of $\lambda$ would consist of the opposite roots to these in the $\Phi$-string of $-\lambda$, that is, $I_\Phi^\lambda = - I_\Phi^{-\lambda}$.

\begin{proposition} \label{proposition:root:system:lambda:phi}
Let $\Phi$ be a proper subset of the set $\Pi$ of simple roots and let $\lambda \in \Sigma^{\Phi}$ be a root of minimum level in its $\Phi$-string. Then:
\begin{enumerate}[{\rm (i)}]
\item The set $\Sigma_{\lambda, \Phi} = \spann_{\mathbb{Z}} (\{\lambda\} \cup \Phi) \cap \Sigma$ is a root subsystem of $\Sigma$ for which $\Pi_{\lambda, \Phi}=\{\lambda\} \cup \Phi$ is a simple system. \label{proposition:root:system:lambda:phi:1}
\item $\lambda$ is the unique root of minimum level in its $\Phi$-string.\label{proposition:root:system:lambda:phi:2}
\item If $\lambda + \sum_{\alpha \in \Phi} n_\alpha \alpha \in I_\Phi^\lambda$, then $n_\alpha$ is a non-negative integer for each $\alpha \in \Phi$.\label{proposition:root:system:lambda:phi:3}
\end{enumerate}
\end{proposition}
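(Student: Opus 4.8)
The plan is to prove the three items in the order (\ref{proposition:root:system:lambda:phi:3}), (\ref{proposition:root:system:lambda:phi:1}), (\ref{proposition:root:system:lambda:phi:2}), since the positivity of the coefficients will be the engine that drives the other two. First I would observe that $\Sigma_{\lambda,\Phi} = \spann_\mathbb{Z}(\{\lambda\}\cup\Phi)\cap\Sigma$ is automatically a root subsystem of $\Sigma$: it is closed under the reflections $A_{\alpha,\beta}$ by axiom (c) of an abstract root system, since subtracting integer multiples of $\lambda$ and of the roots in $\Phi$ keeps us inside $\spann_\mathbb{Z}(\{\lambda\}\cup\Phi)$. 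Inducing the positivity criterion of $\Sigma$, the nontrivial content of item (\ref{proposition:root:system:lambda:phi:1}) is that $\Pi_{\lambda,\Phi}=\{\lambda\}\cup\Phi$ is actually a \emph{simple} system for $\Sigma_{\lambda,\Phi}$, not merely a spanning set.

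\medskip

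For item (\ref{proposition:root:system:lambda:phi:3}), let $\mu=\lambda+\sum_{\alpha\in\Phi}n_\alpha\alpha\in I_\Phi^\lambda$ and suppose for contradiction that some coefficient $n_\beta$ is negative. The idea is to exploit the minimality of the level of $\lambda$. Writing $\lambda=\sum_{\gamma\in\Pi}m_\gamma\gamma$ in terms of the full simple system $\Pi$, the coefficient of any simple root $\gamma\in\Pi\setminus\Phi$ is the same for $\lambda$ and for every element of $I_\Phi^\lambda$, since we only add elements of $\spann\Phi$. Because $\lambda\in\Sigma^\Phi$ is positive and not spanned by $\Phi$, at least one such coefficient $m_\gamma$ (with $\gamma\notin\Phi$) is a positive integer, and this is shared by all of $I_\Phi^\lambda$. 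In particular every element of $I_\Phi^\lambda$ has a strictly positive coefficient on a simple root outside $\Phi$, so by Proposition~\ref{proposition:simple:basis} every element of $I_\Phi^\lambda$ is a \emph{positive} root of $\Sigma$. Now if $n_\beta<0$ for some $\beta\in\Phi$, then $\langle\mu,\beta\rangle$ can be analyzed to produce, via Proposition~\ref{proposition:strings}~(\ref{proposition:strings:4}), a root $\mu+\beta$ or $\mu-\beta$ still in $I_\Phi^\lambda$ with strictly smaller level; iterating this reduction (decreasing the negative coefficients toward zero) eventually yields an element of $I_\Phi^\lambda$ of level strictly below $l(\lambda)$, contradicting minimality. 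I expect the clean way to organize this is to take $\mu\in I_\Phi^\lambda$ of minimum level among those with some negative coefficient and derive the contradiction directly from the fact that $\sum_{\alpha\in\Phi}n_\alpha\langle\mu,\alpha\rangle$ controls the sign of at least one pairing.

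\medskip

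With item (\ref{proposition:root:system:lambda:phi:3}) in hand, item (\ref{proposition:root:system:lambda:phi:1}) follows cleanly: every element $\mu\in\Sigma_{\lambda,\Phi}^+$ is either spanned by $\Phi$ (hence a non-negative integer combination of $\Phi$ by Proposition~\ref{proposition:simple:basis} applied inside $\Sigma_\Phi$) or lies in $I_\Phi^\lambda$ (the unique $\lambda$-coefficient being one), in which case item (\ref{proposition:root:system:lambda:phi:3}) gives a non-negative integer combination of $\{\lambda\}\cup\Phi$; and every element of $\Sigma_{\lambda,\Phi}^-$ is the negative of such. Since $\{\lambda\}\cup\Phi$ is linearly independent (it sits inside the linearly independent set $\Pi$ together with one extra positive root $\lambda$ whose $\Pi\setminus\Phi$-support is nonzero), every positive root of $\Sigma_{\lambda,\Phi}$ is a non-negative integer combination of $\Pi_{\lambda,\Phi}$, which is exactly the defining property of a simple system. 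Finally, item (\ref{proposition:root:system:lambda:phi:2}) is immediate: any other root of minimum level in the $\Phi$-string would differ from $\lambda$ by a nonzero element of $\spann\Phi$, hence have a strictly negative coefficient on some $\alpha\in\Phi$ by item (\ref{proposition:root:system:lambda:phi:3}) applied symmetrically, which is impossible once we know $\lambda$ realizes the minimum.

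\medskip

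The main obstacle will be the level-reduction argument in item (\ref{proposition:root:system:lambda:phi:3}): one must ensure that the root produced by applying Proposition~\ref{proposition:strings}~(\ref{proposition:strings:4}) genuinely stays in $I_\Phi^\lambda$ (i.e. differs from $\lambda$ only by an element of $\spann\Phi$) and genuinely has smaller level, and that the induction terminates. The subtle point is guaranteeing that a \emph{simple} root $\beta\in\Phi$ can always be chosen so that $\mu\mp\beta$ strictly decreases the level while preserving membership in the string; the cleanest route is to pick $\beta$ realizing a negative coefficient and use that the pairing against the whole of $\Phi$ cannot be non-positive on all simple roots simultaneously without forcing $\mu\in\spann\Phi$, a case already excluded.
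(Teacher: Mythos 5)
Your plan hinges on proving item~(\ref{proposition:root:system:lambda:phi:3}) first, and that is exactly where the argument breaks. Write $\mu=\lambda+\nu^{+}-\nu^{-}\in I_\Phi^\lambda$ with $\nu^{\pm}$ non-negative combinations of $\Phi$ with disjoint supports and $\nu^{-}\neq 0$. The sign analysis you can actually carry out (minimality gives $\langle\lambda,\alpha\rangle\leq 0$ for all $\alpha\in\Phi$, and distinct simple roots pair non-positively) yields $\langle\mu,\nu^{-}\rangle<0$, hence some $\beta$ in the \emph{negative} support with $\langle\mu,\beta\rangle<0$; Proposition~\ref{proposition:strings}~(\ref{proposition:strings:4}) then produces $\mu+\beta$, whose level is strictly \emph{larger}, not smaller. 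Iterating your reduction of "negative coefficients toward zero" therefore climbs upward and terminates at a root with all coefficients non-negative and level at least $l(\lambda)$ --- no element of level below $l(\lambda)$ is ever produced, so minimality is never contradicted. To descend instead, you would need some $\beta\in\Phi$ with $\mu-\beta\in\Sigma$; but having a $\beta$ with $\langle\mu,\beta\rangle>0$ is sufficient, not necessary, for this, and guaranteeing a descent in general is precisely the no-gaps statement (Proposition~\ref{proposition:string:simple:rest}), whose proof in the paper uses the present proposition --- invoking it here would be circular. Your proposed patch, that $\langle\mu,\alpha\rangle\leq 0$ for all $\alpha\in\Phi$ forces $\mu\in\spann\Phi$ (or forces minimality), is false: Remark~\ref{remark:minimum:level:characterization} exhibits $\gamma=\lambda+\alpha_1$ in the $(A_2,C_3)$ case with $A_{\alpha_1,\gamma}=0$ and $A_{\alpha_2,\gamma}=-1$, non-positive against all of $\Phi$ yet neither spanned by $\Phi$ nor of minimum level; this stalling phenomenon is exactly why the converse in Proposition~\ref{proposition:root:minimum:level} needs a length hypothesis.

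There is a second, smaller error in your deduction of item~(\ref{proposition:root:system:lambda:phi:1}): you assert every positive root of $\Sigma_{\lambda,\Phi}$ is either spanned by $\Phi$ or lies in $I_\Phi^\lambda$, which overlooks roots with $\lambda$-coefficient at least $2$ (these abound in the $B_{n+1}$, $BC_{n+1}$, $C_{n+1}$, $F_4$ and $E_8$ cases of Tables~\ref{table:classical} and~\ref{table:exceptional}), and item~(\ref{proposition:root:system:lambda:phi:3}) says nothing about them. That part is repairable by checking instead that each element of $\Pi_{\lambda,\Phi}$ is indecomposable in $\Sigma_{\lambda,\Phi}^{+}$ and counting simple roots via Proposition~\ref{proposition:simple:basis} --- and this is in fact the paper's route, in the opposite logical order to yours: a decomposition $\lambda=\nu_1+\nu_2$ forces the $\lambda$-coefficients to be $1$ and $0$, so $\nu_1=\lambda+\theta\in I_\Phi^\lambda$ and $\nu_2=-\theta$ with $\theta\in\spann_{\mathbb{Z}}\Phi$, and a single appeal to minimality (if the coefficient sum of $\theta$ were negative, $\nu_1$ would undercut $l(\lambda)$) shows $\nu_2$ cannot be positive; items~(\ref{proposition:root:system:lambda:phi:2}) and~(\ref{proposition:root:system:lambda:phi:3}) then fall out of Proposition~\ref{proposition:simple:basis} applied in $\Sigma_{\lambda,\Phi}$. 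As sketched, your proof of item~(\ref{proposition:root:system:lambda:phi:3}) does not close, and everything downstream depends on it.
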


\begin{proof}
(\ref{proposition:root:system:lambda:phi:1}): Let $\lambda \in \Sigma^{\Phi}$ be a root of minimum level in its $\Phi$-string. Since $\lambda$ is not spanned by $\Phi$, we have that $\Pi_{\lambda, \Phi} = \{ \lambda \} \cup \Phi$ is a basis for its span. The set $\Sigma\cap\mathrm{span}_\mathbb{Z} \Pi_{\lambda, \Phi}$ satisfies the three conditions of a root system (see Section~\ref{section:preliminaries}). We denote by $\Sigma_{\lambda, \Phi} =\Sigma\cap\mathrm{span}_\mathbb{Z} \Pi_{\lambda, \Phi}$ the new root system and use the positivity criterion in $\Sigma$ to induce a positivity criterion in $\Sigma_{\lambda, \Phi}$. Now, we need to see that $\Pi_{\lambda, \Phi} = \{ \lambda \} \cup \Phi$ is a simple system for the root system $\Sigma_{\lambda, \Phi}$. In other words, we need to see that each root $\alpha \in \Pi_{\lambda, \Phi}$ cannot be written as $\alpha = \nu_1 + \nu_2$, for any $\nu_1$, $\nu_2 \in {\Sigma_{\lambda, \Phi}^{+}}$. In particular, this is true for all $\alpha \in \Phi$, since it is true in the root system $\Sigma$ and $\Sigma_{\lambda, \Phi} \subset \Sigma$. Put now  
\begin{equation}\label{equation:minimum:root}
\lambda = \nu_1 + \nu_2 = \left( n^{1}_{\lambda} \lambda + \sum_{\alpha \in \Phi} n^{1}_{\alpha} \alpha \right) + \left( n^{2}_{\lambda} \lambda + \sum_{\alpha \in \Phi} n^{2}_{\alpha} \alpha \right),
\end{equation}
where $\nu_1$, $\nu_2 \in {\Sigma^{+}_{\lambda, \Phi}}$ and the coefficient $n^{k}_{\nu}$ is integer for each $k \in \{1,2\}$ and each $\nu \in \{\lambda\} \cup \Phi$. Since $\Pi_{\lambda, \Phi}$ is a basis for its span, from~\eqref{equation:minimum:root} we deduce $1 = n^{1}_{\lambda} + n^{2}_{\lambda}$ and $n^{2}_{\alpha} = -n^{1}_{\alpha}$, for each $\alpha \in \Phi$. Since $\nu_1$, $\nu_2$ must be in $\Sigma^{+}$ and $\lambda$ is not spanned by $\Phi$, then $n^1_\lambda$, $n^2_\lambda = 1-n^1_\lambda \geq 0$ and hence we can and will assume $n^1_\lambda = 1$ (otherwise $n^2_\lambda = 1$ and we rename coefficients). Thus, we have
\begin{equation}\label{equation:minimum:root:third:step}
\lambda = \nu_1 + \nu_2 = \left(\lambda + \sum_{\alpha \in \Phi} n^1_{\alpha} \alpha \right) + \left(\sum_{\alpha \in \Phi} (-n^1_{\alpha}) \alpha \right).
\end{equation}
Now, if $\sum_{\alpha \in \Phi} n^1_{\alpha} < 0$, then $\nu_1$ would be a root in the $\Phi$-string of $\lambda$ of lower level than $\lambda$, which is a contradiction. Thus $\sum_{\alpha \in \Phi} n^1_{\alpha} \geq 0$ and $\nu_2$ cannot be a positive root. This proves that $\Pi_{\lambda, \Phi}$ is a simple system for the root system $\Sigma_{\lambda, \Phi}$.
		
(\ref{proposition:root:system:lambda:phi:2}): Let $\gamma = \lambda + \sum_{\alpha \in \Phi} n_{\alpha} \alpha$ be another root of minimum level in the $\Phi$-string of $\lambda$. We think of $\lambda$ and $\gamma$ in the root system $\Sigma_{\lambda, \Phi}$ with simple system $\Pi_{\lambda, \Phi}$. When we express $\gamma$ with respect to $\Pi_{\lambda, \Phi}$, the coefficient corresponding to $\lambda$ is one, and thus we deduce $n_{\alpha} \geq 0$ for all $\alpha \in \Phi$ by virtue of Proposition~\ref{proposition:simple:basis}. Since $\gamma$ is also of minimum level by assumption, we must have $\sum_{\alpha \in \Phi} n_{\alpha} = 0$. This means that $n_{\alpha} = 0$ for each $\alpha \in \Phi$, and thus $\gamma = \lambda$.

(\ref{proposition:root:system:lambda:phi:3}): This follows from combining the fact that $\Sigma_{\lambda, \Phi}$ is root system by virtue of~(\ref{proposition:root:system:lambda:phi:1}) with Proposition~\ref{proposition:simple:basis}.
\end{proof}

If $\Sigma$ is a root system, then each node of its Dynkin diagram is connected to at most three nodes~\cite[Proposition~2.78 (c)]{K}. From this and Proposition~\ref{proposition:root:system:lambda:phi}, we obtain

\begin{corollary}\label{corollary:three:simple:roots}
Let $\Phi_1, \dots, \Phi_n$ be connected and mutually orthogonal subsets of the set $\Pi$ of simple roots, and put $\Phi = \Phi_1 \cup \dots \cup \Phi_n$, with $n \geq 1$. Assume that $\Phi \neq \Pi$. Let $\lambda \in \Sigma^{\Phi}$ be the root of minimum level in its $\Phi$-string. Then:
\begin{enumerate}[{\rm (i)}]
\item If $n > 3$, the root system $\Sigma_{\lambda, \Phi}$ is reducible. In other words, the set $ \{ \alpha \in \Phi \, : \, A_{\alpha, \lambda} < 0  \} = \{ \alpha \in \Phi \, : \, \lambda + \alpha \in \Sigma^+ \}$ contains at most three roots.
\item  The $\Phi$-string of $\lambda$ coincides with the $\Psi$-string of $\lambda$, where $\Psi = \bigcup_{i=1}^k \Phi_{\sigma(i)}$ with $k \leq 3$, for some permutation $\sigma$ of the set $\{1, \dots, n \}$.
\end{enumerate}

\end{corollary}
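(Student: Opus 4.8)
The plan is to move everything inside the root system $\Sigma_{\lambda, \Phi}$, whose simple system is $\Pi_{\lambda, \Phi} = \{\lambda\} \cup \Phi$ by Proposition~\ref{proposition:root:system:lambda:phi}~(\ref{proposition:root:system:lambda:phi:1}), and to read off the conclusions from its Dynkin diagram. The nodes of this diagram are $\{\lambda\} \cup \Phi$. Since the $\Phi_i$ are mutually orthogonal, there are no edges between distinct $\Phi_i$ and $\Phi_j$, whereas each $\Phi_i$ spans a connected subgraph; hence the only edges that can glue different $\Phi_i$'s together are the ones emanating from the node $\lambda$.

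First I would control how $\lambda$ attaches to the pieces $\Phi_i$. I claim $\lambda$ is joined to at most one node of each $\Phi_i$: if $\lambda$ were joined to two nodes of some $\Phi_i$, then, since $\Phi_i$ is connected, a path inside $\Phi_i$ joining these two nodes together with the two edges from $\lambda$ would produce a cycle in the Dynkin diagram of $\Sigma_{\lambda, \Phi}$, contradicting the absence of loops~\cite[Proposition~2.78~(b)]{K} (as already used in Lemma~\ref{lemma:sum:connected:diagram}). Because $\lambda$ is joined to at most three nodes in total~\cite[Proposition~2.78~(c)]{K}, it is therefore joined to at most three of the $\Phi_i$'s. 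Next I would identify the adjacency set: for $\alpha \in \Phi$, the node $\alpha$ is joined to $\lambda$ precisely when $A_{\alpha, \lambda} < 0$. Indeed, $\alpha$ and $\lambda$ are distinct simple roots, so $\langle \alpha, \lambda \rangle \leq 0$; and if $\langle \alpha, \lambda \rangle = 0$, then since $\lambda - \alpha \notin \Sigma$ (a simple root minus a simple root is not a root), Proposition~\ref{proposition:strings}~(\ref{proposition:strings:5}) forces $\lambda + \alpha \notin \Sigma$, so the two nodes are not joined. Combining with Proposition~\ref{proposition:strings}~(\ref{proposition:strings:4}) gives the equality $\{\alpha \in \Phi : A_{\alpha, \lambda} < 0\} = \{\alpha \in \Phi : \lambda + \alpha \in \Sigma^+\}$, and this set, being exactly the set of $\Phi$-nodes adjacent to $\lambda$, has at most three elements.

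For statement~(i), if $n > 3$ then, since $\lambda$ attaches to at most three of the $\Phi_i$'s, at least one $\Phi_j$ is joined neither to $\lambda$ nor (by orthogonality) to any other $\Phi_i$. Thus the nodes of $\Phi_j$ form a union of connected components of the Dynkin diagram of $\Sigma_{\lambda, \Phi}$ disjoint from the rest, so the diagram is disconnected and $\Sigma_{\lambda, \Phi}$ is reducible.

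Finally, for statement~(ii), let $\Psi = \bigcup_{i=1}^{k} \Phi_{\sigma(i)}$ be the union of those $\Phi_i$ joined to $\lambda$, where $\sigma$ reorders the indices so that these are the first $k$; by the second paragraph $k \leq 3$. The connected component of the Dynkin diagram containing $\lambda$ has node set $\{\lambda\} \cup \Psi$, and the remaining nodes $\Phi \setminus \Psi$ span components orthogonal to it. Arguing as in Lemma~\ref{lemma:sum:orthogonal:subsets}, the reducible root system $\Sigma_{\lambda, \Phi}$ splits as the orthogonal disjoint union of the subsystem spanned by $\{\lambda\} \cup \Psi$ and the subsystem spanned by $\Phi \setminus \Psi$~\cite[p.~160]{K}, and each root lies entirely in one of the two pieces. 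Every root of the $\Phi$-string of $\lambda$ has $\lambda$-coefficient equal to $1$ with respect to $\Pi_{\lambda, \Phi}$, so it cannot lie in the piece spanned by $\Phi \setminus \Psi$; hence it lies in the piece spanned by $\{\lambda\} \cup \Psi$, i.e.\ its coefficients along $\Phi \setminus \Psi$ vanish, placing it in the $\Psi$-string of $\lambda$. The reverse inclusion is immediate since $\Psi \subseteq \Phi$, so the two strings coincide. The step I expect to be most delicate is the no-loop argument bounding the attachments of $\lambda$ to a single $\Phi_i$, together with the clean use of reducibility to confine the entire $\Phi$-string to the connected component of $\lambda$.
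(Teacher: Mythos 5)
Your proof is correct and follows exactly the route the paper intends: the paper derives this corollary, without writing out details, from Proposition~\ref{proposition:root:system:lambda:phi} together with the fact that each node of a Dynkin diagram meets at most three edges~\cite[Proposition~2.78~(c)]{K}, and your argument simply makes those details explicit (adjacency of $\alpha$ to $\lambda$ in the diagram of $\Sigma_{\lambda,\Phi}$ is equivalent to $A_{\alpha,\lambda}<0$ and to $\lambda+\alpha\in\Sigma^+$ via the string relation $p-q=A_{\alpha,\lambda}$, and the reducible splitting confines every root with $\lambda$-coefficient $1$ to the component of $\lambda$, as in Lemma~\ref{lemma:sum:orthogonal:subsets}). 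The auxiliary no-loop observation that $\lambda$ attaches to at most one node of each $\Phi_i$ is true but not even needed for the stated bounds, so your proof is, if anything, slightly more informative than required.
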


The next result can be understood as a generalization of the fact that $\alpha$-strings have no gaps, as expressed in Proposition~\ref{proposition:strings}~(\ref{proposition:strings:5}).

\begin{proposition}\label{proposition:string:simple:rest}
Let $\Phi$ be a proper subset of $\Pi$ and let $\gamma \in \Sigma^{\Phi}$ be a root not of minimum level in its $\Phi$-string. Then there exists $\beta \in \Phi$ such that $\gamma - \beta$ is a root in the $\Phi$-string of $\lambda$. 
\end{proposition}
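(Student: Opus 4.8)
The plan is to transport the whole question into the root subsystem $\Sigma_{\lambda, \Phi}$ produced by Proposition~\ref{proposition:root:system:lambda:phi}, where $\lambda$ denotes the root of minimum level in the $\Phi$-string of $\gamma$ (unique by item~(\ref{proposition:root:system:lambda:phi:2})) and $\Pi_{\lambda, \Phi} = \{\lambda\} \cup \Phi$ is a simple system. By item~(\ref{proposition:root:system:lambda:phi:3}) every root of the string has the form $\lambda + \sum_{\alpha \in \Phi} n_\alpha \alpha$ with $n_\alpha \geq 0$; in particular the $\lambda$-coefficient of $\gamma$ equals $1$, and $\mu := \gamma - \lambda = \sum_{\alpha \in \Phi} n_\alpha \alpha$ is a nonzero nonnegative combination of $\Phi$ because $\gamma \neq \lambda$. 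The goal then becomes to produce $\beta \in \Phi$ with $\gamma - \beta \in \Sigma$, which automatically lies in the $\Phi$-string of $\lambda$.

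First I would dispose of the easy case exactly as in Lemma~\ref{exercise}: if $\langle \gamma, \beta \rangle > 0$ for some $\beta \in \Phi$, then $\gamma - \beta \in \Sigma \cup \{0\}$ by Proposition~\ref{proposition:strings}~(\ref{proposition:strings:4}), and $\gamma - \beta \neq 0$ since $\gamma$ and $\beta$ have different $\lambda$-coefficients, so $\gamma - \beta$ is the root we want. Hence I may assume $\langle \gamma, \beta \rangle \leq 0$ for every $\beta \in \Phi$. Pairing $\gamma$ with itself gives $0 < |\gamma|^2 = \langle \gamma, \lambda \rangle + \sum_{\alpha \in \Phi} n_\alpha \langle \gamma, \alpha \rangle \leq \langle \gamma, \lambda \rangle$, so $\langle \gamma, \lambda \rangle > 0$, and Proposition~\ref{proposition:strings}~(\ref{proposition:strings:4}) yields $\mu = \gamma - \lambda \in \Sigma$; thus $\mu$ is in fact a positive root of $\Sigma_\Phi$. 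Moreover minimality of $\lambda$ forces $\langle \lambda, \alpha \rangle \leq 0$ for all $\alpha \in \Phi$ (otherwise $\lambda - \alpha \in \Sigma$ would be a lower-level element of the string), and inspecting the $\mu$-string through $\lambda$ via Proposition~\ref{proposition:strings}~(\ref{proposition:strings:5})—it reaches $\lambda + \mu = \gamma$ but cannot reach $\lambda - \mu$—gives $\langle \lambda, \mu \rangle < 0$.

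The heart of the proof, and the step I expect to be the main obstacle, is to rule out that $\gamma$ is \emph{blocked}, i.e.\ that $\gamma - \beta \notin \Sigma$ for every $\beta \in \Phi$; this is precisely the multidimensional analogue of the gap-free property of Proposition~\ref{proposition:strings}~(\ref{proposition:strings:5}). Assume $\gamma$ is blocked. Then $\mu$ cannot be simple (if $\mu \in \Phi$ then $\gamma - \mu = \lambda \in \Sigma$), so Lemma~\ref{exercise} provides $\beta \in \Phi$ with $\mu - \beta \in \Sigma_\Phi^+$. For any such admissible $\beta$, blockedness gives $\lambda + (\mu - \beta) = \gamma - \beta \notin \Sigma$, while minimality gives $\lambda - (\mu - \beta) \notin \Sigma$; hence the $(\mu - \beta)$-string through $\lambda$ reduces to $\{\lambda\}$, so $\langle \lambda, \mu - \beta \rangle = 0$, that is $\langle \lambda, \mu \rangle = \langle \lambda, \beta \rangle$. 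Expanding $\langle \lambda, \mu \rangle = \sum_{\alpha \in \Phi} n_\alpha \langle \lambda, \alpha \rangle$ and using that each summand is $\leq 0$, that $n_\beta \geq 1$, and that $\langle \lambda, \beta \rangle = \langle \lambda, \mu \rangle < 0$, this equality forces $n_\beta = 1$ together with $\langle \lambda, \alpha \rangle = 0$ for every other $\alpha$ in the support of $\mu$; in particular $\beta$ is the \emph{unique} support node of $\mu$ that is not orthogonal to $\lambda$.

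To close the argument I would exploit this rigidity through a dichotomy. If $\mu$ admits two distinct admissible simple roots $\beta \neq \beta'$, then under blockedness each of $\beta, \beta'$ would have to be that unique non-orthogonal support node, which is absurd; hence one of $\gamma - \beta$, $\gamma - \beta'$ is a root and we are done. The residual case is that $\mu$ has a single admissible simple root $\beta$, where the forced relation $n_\beta = 1$ must be contradicted; this reduces to the purely combinatorial fact that a non-simple positive root whose unique admissible simple root occurs with coefficient one cannot exist—equivalently, an admissible node of coefficient one, being a leaf of the connected support of $\mu$, always leaves a second admissible descent available. Here one would also bring in Proposition~\ref{proposition:strings}~(\ref{proposition:strings:3}): since blockedness additionally yields $\langle \mu, \alpha \rangle \leq 0$ for the other support nodes, one gets $\langle \mu, \beta \rangle \geq |\mu|^2$ and hence $A_{\mu, \beta} \geq 2$, which pins $\beta$ down as a long node carried by the short root $\mu$ and makes the absence of a second admissible node untenable. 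Establishing this last structural incompatibility—balancing the minimality of $\lambda$, the connectedness of root supports, and the integrality of the Cartan integers—is the delicate core; everything preceding it is routine once the problem is placed inside $\Sigma_{\lambda, \Phi}$.
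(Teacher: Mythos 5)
Your reduction into $\Sigma_{\lambda,\Phi}$, the disposal of the easy case $\langle\gamma,\beta\rangle>0$, the deduction that $\mu=\gamma-\lambda$ is a positive root of $\Sigma_\Phi$ with $A_{\mu,\lambda}<0$, and the rigidity analysis under blockedness (every descent $\beta$ of $\mu$ satisfies $A_{\mu-\beta,\lambda}=0$, forcing a \emph{unique} descent $\beta$ with $n_\beta=1$ and $\langle\lambda,\alpha\rangle=0$ for every other support node $\alpha$ of $\mu$) are all correct. But the proof then stops exactly where it must not. The residual case is dismissed by appeal to the claim that ``a non-simple positive root whose unique admissible simple root occurs with coefficient one cannot exist,'' which you flag as the delicate core and never prove. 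This is a genuine gap, and not a routine one: that claim is essentially the proposition itself in a smaller instance (it asserts that $\mu$, viewed in the $\Psi$-string of $\beta$ where $\Psi$ is the support of $\mu-\beta$, has a descent other than $\beta$), so invoking it is circular unless it is proven independently. The auxiliary assertions you sketch do not close it: nothing you established implies $\beta$ is a leaf of the support of $\mu$; and the length bookkeeping ($A_{\mu,\beta}\geq 2$, hence $\beta$ long, $\mu$ short, $|\beta|^2=2|\mu|^2$) is self-consistent rather than contradictory --- indeed your own constraints give $\langle\mu,\alpha\rangle=0$ for each support node $\alpha\neq\beta$, so by Proposition~\ref{proposition:strings}~(\ref{proposition:strings:5}) each such $\alpha$-string through $\mu$ is symmetric ($\mu-\alpha\in\Sigma$ if and only if $\mu+\alpha\in\Sigma$), and integrality of the Cartan integers alone produces no second descent.

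The paper closes precisely this point by induction on $|\Phi|$ (first for connected $\Phi$, with $|\Phi|=1$ settled by Proposition~\ref{proposition:strings}~(\ref{proposition:strings:5}), then reducing the non-connected case via Lemma~\ref{lemma:sum:orthogonal:subsets}): after showing the critical descent has coefficient one, it regards $\mu$ as a non-minimal root in the $\Psi$-string of that descent, where $\Psi$ is the (connected) support of the difference, and the induction hypothesis manufactures a second descent $\beta'\perp\lambda$, whence $A_{\lambda,\mu-\beta'}=A_{\lambda,\mu}<0$ and $\gamma-\beta'\in\Sigma$ by Proposition~\ref{proposition:strings}~(\ref{proposition:strings:4}). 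Your argument is repairable the same way at little cost: run your whole proof by induction on $|\Phi|$ (or on the level of $\mu$); in the residual case, $n_\beta=1$ means $\nu=\mu-\beta$ is supported on $\Psi\subseteq\Phi\setminus\{\beta\}$, $\beta$ is the minimum-level root of the $\Psi$-string containing $\mu$, and $\mu\neq\beta$, so the inductive hypothesis applied to the pair $(\beta,\Psi)$ yields a descent of $\mu$ in $\Psi$, contradicting the uniqueness you derived and refuting blockedness. Without some such inductive mechanism, the proposal is missing its decisive step.
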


\begin{proof}
First, we will assume that $\Phi$ is connected and proceed by induction on the number of roots in $\Phi$. If $\Phi$ contains just one root, our claim follows from Proposition~\ref{proposition:strings}~(\ref{proposition:strings:5}). From now on, we assume that $\Phi$ contains at least two roots. Hence, $\Sigma$ is not of type $G_2$, as this root system has two simple roots and $\Phi$ is a proper subset of $\Pi$.

Let us then assume that the result is true for $\Psi$-strings with $\Psi$ connected and $|\Psi| = n-1$, and put $|\Phi| = n$. Let $\gamma \in \Sigma^{\Phi}$ be a root not of minimum level in its $\Phi$-string. We will think of $\gamma$ in the root system $\Sigma_{\lambda, \Phi}$ with simple system $\Pi_{\lambda, \Phi}$ (see Proposition~\ref{proposition:root:system:lambda:phi}). According to Lemma~\ref{exercise}, we can write $\gamma = \mu + \nu$, for $\mu \in \Sigma_{\lambda, \Phi}^{+}$ and $\nu \in \Pi_{\lambda, \Phi}$. If $\nu$ is in $\Phi$, then we are done. Thus, assume that $\mu$ is a root spanned by $\Phi$ and $\nu = \lambda$. If $\mu \in \Phi$, we are done again. 
	
Hence, assume that $\mu$ is a root spanned by $\Phi$ with level $\geq 2$ (since it is spanned by $\Phi$ its level is the same in $\Sigma$ and in $\Sigma_{\lambda, \Phi}$). Since both $\mu$ and $\mu + \lambda = \gamma$ are roots but $\mu - \lambda$ cannot be a root, from Proposition~\ref{proposition:strings}~(\ref{proposition:strings:5}) we deduce $A_{\lambda, \mu} < 0$. By regarding $\mu$ as a root in the root system $\Sigma_{\Phi}$ with simple system $\Phi$, we get that $\mu - \alpha$ is a positive root spanned by $\Phi$ for some $\alpha \in \Phi$, by virtue of Lemma~\ref{exercise}. On the one hand, if $A_{\lambda, \mu - \alpha} >0$, from Proposition~\ref{proposition:strings}~(\ref{proposition:strings:4}) we would obtain that $\mu - \alpha- \lambda$ is a positive root or zero. This is not possible since $\lambda$ is not spanned by $\Phi$ and $\mu - \alpha$ is a positive root spanned by $\Phi$. On the other hand, if $A_{\lambda, \mu - \alpha} < 0$, from Proposition~\ref{proposition:strings}~(\ref{proposition:strings:4}) we would obtain that $\mu - \alpha + \lambda = \gamma - \alpha$ is a root and the result follows. Thus, assume that $0 = A_{\lambda, \mu - \alpha} = A_{\lambda, \mu} - A_{\lambda,\alpha}$. Then $ A_{\lambda,\alpha} = A_{\lambda, \mu} <0$ and $\alpha$ is connected to $\lambda$ in the Dynkin diagram of the root system $\Sigma_{\lambda, \Phi}$. Since $\Phi$ is connected, if there were another root $\beta \in \Phi$ connected to $\lambda$, we would have a loop. Thus, $\alpha \in \Phi$ is the unique root connected to $\lambda$ in the Dynkin diagram of the root system $\Sigma_{\lambda, \Phi}$. Put $\mu = \sum_{\beta \in \Phi } n_{\beta} \beta$, where $n_\beta \geq 0$ is an integer for each $\beta \in \Phi$. 
	
Therefore, from $ A_{\lambda,\alpha} = A_{\lambda, \mu}  <0$ and $A_{\beta, \lambda} = 0$ for all $\beta \in \Phi \backslash \{ \alpha \}$, we deduce $n_{\alpha} =1$. From this and Lemma~\ref{lemma:sum:orthogonal:subsets}, we deduce that $\mu - \alpha$ is spanned by a connected subset $\Psi \subset \Phi \backslash \{ \alpha \}$. Thus, we have that $\mu = \alpha + \sum_{\beta \in \Psi} n_{\beta} \beta$ is a root in the $\Psi$-string of $\alpha$, with $\alpha \in \Phi$ not spanned by $\Psi$. Thus $\mu$ is not of minimum level in its $\Psi$-string, as in that case $\mu = \alpha$ and we are assuming $l(\mu) \geq 2$. Thus, by induction hypothesis, we can take $\beta \in \Psi \subset \Phi \backslash \{\alpha\}$ such that $\mu - \beta$ is a root. But then $A_{\lambda, \mu-\beta} = A_{\lambda, \mu} <0$ and thus $\lambda + \mu -\beta = \gamma -\beta$ is a positive root by means of Proposition~\ref{proposition:strings}~(\ref{proposition:strings:4}). This proves our claim when $\Phi$ is a connected subset of $\Pi$.

Thus, let $\Phi_1, \dots, \Phi_n$ be mutually orthogonal connected subsets of $\Pi$ and put $\Phi = \Phi_1 \cup \dots \cup \Phi_n$, with $n \geq 1$. Let $\lambda \in \Sigma^\Phi$ be of minimum level in its $\Phi$-string and let $\gamma = \lambda + \sum_{k=1}^n \gamma_k$ be a root not of minimum level in the $\Phi$-string of $\lambda$, with $\gamma_k$ in $\spann \Phi_k$ for each $k \in \{1, \dots, n\}$. Considering $\gamma$ in the root system $\Sigma_{\lambda, \Phi}$, there must exist $\nu \in \Pi_{\lambda, \Phi}$ such that $\gamma - \nu$ is a root, by means of Lemma~\ref{exercise}. If $\nu$ belongs to $\Phi$ our claim follows. Thus, assume $\nu = \lambda$. Hence, using Lemma~\ref{lemma:sum:orthogonal:subsets} we deduce that $\gamma = \lambda + \gamma_l$, for some $l \in \{1, \dots, n\}$. Therefore, $\gamma$ is a root in the $\Phi_l$-string of $\lambda$, and $\Phi_l$ is connected, so we are led to the case studied above.
\end{proof}

\begin{proposition}\label{proposition:root:minimum:level}
Let $\Phi$ be a connected and proper subset of $\Pi$ and let $\lambda \in \Sigma^\Phi$ be a root with non-trivial $\Phi$-string. If $\lambda$ is of minimum level in its $\Phi$-string, then there exists a root $\alpha \in \Phi$ such that $A_{\alpha, \lambda} < 0$ and $A_{\beta, \lambda} = 0$ for each $\beta \in \Phi \backslash \{ \alpha\}$. If $|\beta| \leq |\alpha|$ for each $(\beta, \alpha) \in I_\Phi^\lambda \times \Phi$, the converse is true. 
\end{proposition}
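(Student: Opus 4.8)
The plan is to translate both implications into statements about the Dynkin diagram of the auxiliary root system $\Sigma_{\lambda, \Phi}$ supplied by Proposition~\ref{proposition:root:system:lambda:phi}, and then to settle the converse by a short norm computation exploiting the hypothesis $|\beta| \le |\alpha|$. Throughout I would use that, $\lambda$ and $\delta$ being simple roots of $\Sigma_{\lambda, \Phi}$ for each $\delta \in \Phi$, their inner product is non-positive, that $A_{\delta, \lambda} < 0$ is equivalent to $\lambda$ and $\delta$ being joined in the Dynkin diagram of $\Sigma_{\lambda, \Phi}$, and that this happens exactly when $\lambda + \delta$ is a root.

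For the \emph{forward} implication, assume $\lambda$ is of minimum level with non-trivial string. To produce one neighbour $\alpha$, pick a root $\mu \neq \lambda$ of smallest level in the string; by Proposition~\ref{proposition:string:simple:rest} its lowering $\mu - \beta$ is a root of the string of strictly smaller level for some $\beta \in \Phi$, so by minimality together with the uniqueness of the minimum-level root (Proposition~\ref{proposition:root:system:lambda:phi}~(\ref{proposition:root:system:lambda:phi:2})) it must equal $\lambda$. Hence $\mu = \lambda + \beta \in \Sigma$, so $\lambda$ and $\beta$ are adjacent in the diagram of $\Sigma_{\lambda, \Phi}$ and $A_{\beta, \lambda} < 0$; set $\alpha := \beta$. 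For uniqueness, if two distinct $\alpha, \alpha' \in \Phi$ were both joined to $\lambda$, then—since $\Phi$ is connected, hence contains a path from $\alpha$ to $\alpha'$—the vertices on that path together with $\lambda$ would form a loop in the Dynkin diagram of $\Sigma_{\lambda, \Phi}$, contradicting that Dynkin diagrams are loop-free \cite[Proposition~2.78~(b)]{K}. Thus $\alpha$ is unique and $A_{\beta, \lambda} = 0$ for every other $\beta \in \Phi$.

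For the \emph{converse}, assume the displayed Cartan conditions together with $|\mu| \le |\delta|$ for all $(\mu, \delta) \in I_\Phi^\lambda \times \Phi$. By the contrapositive of Proposition~\ref{proposition:string:simple:rest} it suffices to show $\lambda - \delta \notin \Sigma$ for every $\delta \in \Phi$. Suppose some $\lambda - \delta$ were a root; then $\lambda - \delta \in I_\Phi^\lambda$, so the size hypothesis forces $|\lambda - \delta| \le |\delta|$. On the other hand, using $2\langle \lambda, \delta \rangle = A_{\delta, \lambda}\,|\delta|^2$,
\[
|\lambda - \delta|^2 = |\lambda|^2 - A_{\delta, \lambda}\,|\delta|^2 + |\delta|^2 = |\lambda|^2 + (1 - A_{\delta, \lambda})\,|\delta|^2 > |\delta|^2,
\]
because $A_{\delta, \lambda} \le 0$ (it is $<0$ when $\delta = \alpha$ and $=0$ otherwise), so $1 - A_{\delta, \lambda} \ge 1$. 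This contradicts $|\lambda - \delta| \le |\delta|$. Hence no $\lambda - \delta$ is a root and $\lambda$ is of minimum level.

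I expect the genuine content to sit in the converse. The subtlety is that the Cartan data alone does not suffice: when $\delta \neq \alpha$ one only has $A_{\delta, \lambda} = 0$, which leaves room for a symmetric $\delta$-string $\{\lambda - \delta, \lambda, \lambda + \delta\}$ and hence the possibility that $\lambda - \delta$ is a root of lower level. The hypothesis $|\beta| \le |\alpha|$ is exactly what excludes this, since any such $\lambda - \delta$ in the string would be strictly longer than $\delta$; making this necessity explicit, and noting that the same inequality also handles $\delta = \alpha$ (where $A_{\alpha, \lambda} < 0$ only enlarges $|\lambda - \alpha|$ further), is the crux. The forward direction is then the routine loop-free argument on the diagram of $\Sigma_{\lambda, \Phi}$.
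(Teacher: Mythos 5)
Your proof is correct, and while it shares the paper's overall architecture (work inside the auxiliary system $\Sigma_{\lambda,\Phi}$ of Proposition~\ref{proposition:root:system:lambda:phi}, use loop-freeness of Dynkin diagrams for the uniqueness of the neighbour $\alpha$, and invoke the no-gaps Proposition~\ref{proposition:string:simple:rest} for the converse), both halves are executed by genuinely different local arguments. For the existence of $\alpha$ in the forward direction, the paper never produces an explicit root: it notes that $A_{\delta,\lambda}\leq 0$ for all $\delta\in\Phi$ because $\Pi_{\lambda,\Phi}$ is a simple system, and rules out total orthogonality by observing that it would make $\Sigma_{\lambda,\Phi}$ reducible and hence the $\Phi$-string trivial. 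You instead take the root $\mu\neq\lambda$ of second-smallest level, lower it by Proposition~\ref{proposition:string:simple:rest}, and use uniqueness of the minimum (Proposition~\ref{proposition:root:system:lambda:phi}~(\ref{proposition:root:system:lambda:phi:2})) to force $\mu=\lambda+\beta$; this is slightly longer and makes the forward half depend on the no-gaps result, which the paper's version does not, but it is constructive. For the converse, the paper argues by contrapositive through the integrality trichotomy of Proposition~\ref{proposition:strings}~(\ref{proposition:strings:3}): the length hypothesis puts $A_{\alpha,\lambda}$ and $A_{\alpha,\lambda-\alpha}=A_{\alpha,\lambda}-2$ in $\{0,\pm 1\}$, forcing $A_{\alpha,\lambda}=1$, contradicting $A_{\alpha,\lambda}\leq 0$. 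Your direct norm expansion $|\lambda-\delta|^2=|\lambda|^2+(1-A_{\delta,\lambda})|\delta|^2>|\delta|^2$ reaches the same contradiction more elementarily, without appealing to integrality of Cartan numbers, and it makes transparent exactly where the length hypothesis enters: under the stated Cartan conditions any root $\lambda-\delta$ in the string would be strictly longer than every root of $\Phi$, which is precisely the phenomenon realized by the counterexample of Remark~\ref{remark:minimum:level:characterization}. What the paper's route buys in exchange is the sharper numerical conclusion $A_{\alpha,\lambda}=1$, which is the form of the statement reused elsewhere (e.g.\ in the proofs of Section~\ref{section:classical:root:system}).
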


\begin{proof}
Let $\lambda \in \Sigma^{\Phi}$ be of minimum level in its $\Phi$-string. Since $\Sigma_{\lambda, \Phi}$ is a root system  for which $\Pi_{\lambda, \Phi}$ is a simple system (see Proposition~\ref{proposition:root:system:lambda:phi}), then $A_{\alpha, \lambda} \leq 0$ for each $\alpha \in \Phi$. If $A_{\alpha, \lambda} = 0$ for all $\alpha \in \Phi$, then $\Sigma_{\lambda, \Phi}$ is reducible and thus the $\Phi$-string of $\lambda$ would be trivial. Hence, there must exist a root $\alpha \in \Phi$ such that $A_{\alpha, \lambda} < 0$. If $A_{\beta, \lambda} < 0$ for some $\beta \in \Phi \backslash \{ \alpha \}$, then $\beta$ and $\alpha$ are connected to $\lambda$ in the Dynkin diagram of $\Sigma_{\lambda, \Phi}$. Since $\Phi$ is connected, there would be a loop, which is a contradiction. 

Let us prove the converse under the assumptions of the statement. Suppose that $\lambda$ is not of minimum level in its $\Phi$-string. Hence, from Proposition~\ref{proposition:string:simple:rest} we have that $\lambda - \alpha$ is a positive root in the $\Phi$-string of $\lambda$ for some $\alpha \in \Phi$. From Proposition~\ref{proposition:strings}~(\ref{proposition:strings:3}) we get that $A_{\alpha, \lambda}$, $A_{\alpha, \lambda-\alpha} \in \{0, \pm 1\}$. However, if $A_{\alpha, \lambda} \leq 0$, then $A_{\alpha, \lambda - \alpha} \leq -2$, which is a contradiction. Therefore, we deduce $A_{\alpha, \lambda} = 1$ and the result follows.
\end{proof}

The condition on the length of the roots in order to get the second implication of Proposition~\ref{proposition:root:minimum:level} cannot be omitted. See Remark~\ref{remark:minimum:level:characterization}.

\subsection{Strategy}\label{subsection:strategy}
Let $\Phi$ be a proper subset of the set $\Pi$ of simple roots of a root system $\Sigma$. Recall that $\Phi$ generates a root system $\Sigma_{\Phi}$ for which $\Phi$ is a simple system. Let $\lambda \in \Sigma^{\Phi}$ be the root of minimum level in its non-trivial $\Phi$-string. Now, $\Pi_{\lambda, \Phi} = \{ \lambda \} \cup \Phi$ constitutes a simple system for the root system $\Sigma_{\lambda, \Phi} =\Sigma\cap\mathrm{span}_\mathbb{Z} \Pi_{\lambda, \Phi}$, taking in this latter the positivity criterion induced by the one in $\Sigma$ (see Proposition~\ref{proposition:root:system:lambda:phi} for details). Thus, calculating the $\Phi$-string of $\lambda$ in $\Sigma$ is equivalent to obtaining the roots of the root system $\Sigma_{\lambda, \Phi}$ of the form
\begin{equation}\label{equation:string:count}
n_\lambda \lambda + \sum_{\alpha \in \Phi} n_\alpha \alpha
\end{equation}
with $n_\lambda =1$ and integers $n_\alpha \geq 0$ for each $\alpha \in \Phi$. In this regard, our general strategy to calculate the $\Phi$-string of $\lambda$ explicitly, for each $\Phi \subset \Pi$ and each $\lambda \in \Sigma^\Phi$, is the following:

\begin{enumerate}[1.]
\item First, we will determine the number of roots of the $\Phi$-string of $\lambda$, which is: the number of positive roots of the root system $\Sigma_{\lambda, \Phi}$, minus the number of positive roots of the root system $\Sigma_\Phi$ ($n_\lambda = 0$ in~\eqref{equation:string:count} for them), and minus the number of positive roots in $\Sigma_{\lambda, \Phi}$ with $n_\lambda \geq2$ in~\eqref{equation:string:count}. 


\item Then, we will explicitly obtain as many roots in the $\Phi$-string of $\lambda$ as it has. This process will be slightly different depending on whether $\Sigma_{\lambda, \Phi}$ is a classical root space or an exceptional root space. In the first case, we will obtain the roots starting from $\lambda$ and using results such as Proposition~\ref{proposition:strings}, Proposition~\ref{proposition:root:system:lambda:phi} or Proposition~\ref{proposition:root:minimum:level}. If $\Sigma_{\lambda, \Phi}$ happens to be an exceptional root system, then the list of its positive roots can be found in~\cite[p.~686--692]{K}, so we just gather those corresponding to the $\Phi$-string of~$\lambda$ from there.
	
\end{enumerate}

\section{Classical root systems}\label{section:classical:root:system}

In this section, $\Phi = \{ \alpha_1, \dots, \alpha_n \}$ will denote a proper connected subset of the set $\Pi$ of simple roots of the root system $\Sigma$, and $\lambda \in \Sigma^\Phi$ will denote the root of minimum level in its $\Phi$-string. Therefore, $\Sigma_{\lambda, \Phi} = \spann_{\mathbb{Z}} (\{\lambda\} \cup \Phi) \cap \Sigma$ is a root subsystem of $\Sigma$ for which $\Pi_{\lambda, \Phi} = \{\lambda\} \cup \Phi$ is a simple system, by virtue of Proposition~\ref{proposition:root:system:lambda:phi}. This section is completely devoted to determine the $\Phi$-string of $\lambda$ explicitly, whenever $\Sigma_{\lambda, \Phi}$ gives rise to a classical root system. 

In this line, in Subsection~\ref{subsection:an} we address all the cases when $\Sigma_\Phi$ is an $A_n$ root system, with $n \geq 1$. Similarly, in Subsection~\ref{subsection:bn} we consider simultaneously the pairs $(\Sigma_{\Phi}, \Sigma_{\lambda, \Phi}) \cong (B_n, B_{n+1})$ and $(\Sigma_{\Phi}, \Sigma_{\lambda, \Phi}) \cong (BC_n, BC_{n+1})$, with $n \geq 2$. Then, in Subsection~\ref{subsection:cn}, we analyze the pair $(\Sigma_{\Phi}, \Sigma_{\lambda, \Phi}) \cong (C_n, C_{n+1})$, with $n \geq 3$. Finally in Subsection~\ref{subsection:dn}, we consider the pair $(\Sigma_{\Phi}, \Sigma_{\lambda, \Phi}) \cong (D_n, D_{n+1})$, with $n \geq 3$.

In order to determine all these $\Phi$-strings, we will follow the strategy explained in Subsection~\ref{subsection:strategy}: first, we calculate the number of roots of the $\Phi$-string of $\lambda$, and then we will explicitly construct as many roots belonging to the $\Phi$-string of $\lambda$ as it has.

\subsection{$\Sigma_\Phi$ is an $A_n$ root subsystem of $\Sigma$}\label{subsection:an}
\begin{proposition}\label{proposition:a:string:type}
Let $ (\Sigma_\Phi, \Sigma_{\lambda, \Phi})$ be a pair of type $(A_n, A_{n+1})$, $(A_n, B_{n+1})$ or $(A_n, BC_{n+1})$, with $n \geq 1$, and let 
\\
\begin{equation*}\label{string:a:b:1}
\begin{tikzpicture}[scale=1.7]
\draw (0, 0) circle (0.1);
\draw (1, 0) circle (0.1);
\draw (3, 0) circle (0.1);
\draw (0.1, 0.0) -- (0.9, -0.0);
\draw (1., -0.3) node {$\alpha _1$};
\draw (1.1, 0.) -- (1.5, 0.);
\draw (1.7, 0.) -- (1.82, 0.);
\draw (1.94, 0.) -- (2.06, 0.);
\draw (2.18, 0.) -- (2.3, 0.);
\draw (2.5, 0) -- (2.9, 0);
\draw (3., -0.3) node {$\alpha _n$};
\draw (0, -0.3) node {$\lambda$};
\draw (4, 0) circle (0.1);
\draw (5, 0) circle (0.1);
\draw (7, 0) circle (0.1);
\draw (4.1, -0.04) -- (4.9, -0.04);
\draw (4.1, 0.04) -- (4.9, 0.04);
\draw (5., -0.3) node {$\alpha _1$};
\draw (5.1, 0.) -- (5.5, 0.);
\draw (5.7, 0.) -- (5.82, 0.);
\draw (5.94, 0.) -- (6.06, 0.);
\draw (6.18, 0.) -- (6.3, 0.);
\draw (6.5, 0) -- (6.9, 0);
\draw (7., -0.3) node {$\alpha _n$};
\draw (4, -0.3) node {$\lambda$};
\draw (3.5, 0) node {$\text{or}$};
\end{tikzpicture}
\end{equation*}
be the Dynkin diagram of $\Sigma_{\lambda, \Phi}$, respectively (the second one holds for both $\Sigma_{\lambda, \Phi} \cong B_{n+1}$ and $\Sigma_{\lambda, \Phi} \cong BC_{n+1}$). Then, all the roots in $I_{\Phi}^{\lambda}$ have length $|\lambda|$ and
\[
I_{\Phi}^{\lambda} = \{ \lambda \} \cup \left\{ \lambda + \sum_{i=1}^l \alpha_i \, : \, 1 \leq l \leq n  \right\}.
\]
\end{proposition}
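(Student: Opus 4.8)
The plan is to follow the two-step strategy of Subsection~\ref{subsection:strategy}: first determine $|I_\Phi^\lambda|$ by a cardinality count, and then exhibit exactly that many explicit roots. For the count I would use that $|I_\Phi^\lambda|$ equals the number of positive roots of $\Sigma_{\lambda,\Phi}$, minus those spanned by $\Phi$ (the positive roots of $\Sigma_\Phi$, for which $n_\lambda=0$ in~\eqref{equation:string:count}), minus the number $N_{\ge2}$ of positive roots with $n_\lambda\ge2$. For the first two I would invoke the standard counts $|\Sigma_\Phi^+|=\binom{n+1}{2}$ and $|\Sigma_{\lambda,\Phi}^+|$ equal to $\binom{n+2}{2}$, $(n+1)^2$ or $(n+1)(n+2)$ according as $\Sigma_{\lambda,\Phi}$ is of type $A_{n+1}$, $B_{n+1}$ or $BC_{n+1}$. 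To evaluate $N_{\ge2}$ I would pass to the usual coordinate model in which $\lambda=e_{n+1}$ and $\alpha_i=e_{n+1-i}-e_{n+2-i}$; there one has $\lambda+\sum_{i=1}^l\alpha_i=e_{n+1-l}$ by telescoping, the roots with $n_\lambda=0$ are exactly the $A_n$-roots $e_i-e_j$, and $N_{\ge2}$ counts the roots $e_i+e_j$ (type $B$) together with the $2e_i$ (type $BC$). In each of the three cases the arithmetic then collapses to $|I_\Phi^\lambda|=n+1$.

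Next I would exhibit $n+1$ distinct roots in $I_\Phi^\lambda$, namely $\gamma_0=\lambda$ and $\gamma_l=\lambda+\sum_{i=1}^l\alpha_i$ for $1\le l\le n$. Existence is immediate from Lemma~\ref{lemma:sum:connected:diagram}: each $\{\lambda,\alpha_1,\dots,\alpha_l\}$ is a connected initial segment of the chain $\Pi_{\lambda,\Phi}$, so its sum $\gamma_l$ is a positive root of $\Sigma_{\lambda,\Phi}$, and it lies in the $\Phi$-string because its $\lambda$-coefficient equals $1$. These roots have strictly increasing level, hence are distinct, and therefore they already fill out $I_\Phi^\lambda$. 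To control the length I would prove by induction on $l$ that $|\gamma_l|=|\lambda|$, the key input being $A_{\alpha_l,\gamma_{l-1}}=-1$. On one hand $\gamma_{l-1}-\alpha_l$ cannot be a root: expressed in $\Pi_{\lambda,\Phi}$ it has coefficient $+1$ on $\lambda$ and $-1$ on $\alpha_l$, contradicting Proposition~\ref{proposition:simple:basis}; since $\gamma_{l-1}+\alpha_l=\gamma_l$ is a root, the $\alpha_l$-string of Proposition~\ref{proposition:strings}~(\ref{proposition:strings:5}) forces $A_{\alpha_l,\gamma_{l-1}}\le-1$. On the other hand $|\gamma_{l-1}|=|\lambda|\le|\alpha_l|$ (in type $A_{n+1}$ all lengths agree, while for $B_{n+1}$ and $BC_{n+1}$ the $\alpha_i$ are long and $\lambda$ is short), so Proposition~\ref{proposition:strings}~(\ref{proposition:strings:3}) gives $A_{\alpha_l,\gamma_{l-1}}\in\{0,\pm1\}$. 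Together these yield $A_{\alpha_l,\gamma_{l-1}}=-1$, whence $\langle\gamma_{l-1},\alpha_l\rangle=-|\alpha_l|^2/2$ and $|\gamma_l|^2=|\gamma_{l-1}|^2+2\langle\gamma_{l-1},\alpha_l\rangle+|\alpha_l|^2=|\gamma_{l-1}|^2$, closing the induction.

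Combining the two steps gives the statement: $I_\Phi^\lambda$ has exactly $n+1$ elements, these are precisely $\lambda$ and the $\gamma_l$ with $1\le l\le n$, and all of them have length $|\lambda|$. The telescoping identity $\gamma_l=e_{n+1-l}$ and the cardinality bookkeeping are routine. The main obstacle I anticipate is the accurate determination of $N_{\ge2}$ in the non-simply-laced cases: one must check that no positive root of $B_{n+1}$ (resp.\ $BC_{n+1}$) other than the $e_i+e_j$ (resp.\ and the $2e_i$) has $n_\lambda\ge2$, and in particular that none of the $\gamma_l$ does. This is exactly where one needs to know that $\lambda$ sits at the short end of the double bond (so that $|\lambda|\le|\alpha_i|$): the connected system $\Sigma_\Phi$ forces the $\alpha_i$ to be long, since the short roots of $B_{n+1}$ and $BC_{n+1}$ are mutually orthogonal and cannot span a connected $A_n$ when $n\ge2$. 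It is precisely this length configuration that separates the present proposition from the $C_{n+1}$ case, where $\lambda$ is long and the $\Phi$-string is strictly larger.
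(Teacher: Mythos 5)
Your proposal is correct, and it follows the same two-step template the paper itself announces in Subsection~\ref{subsection:strategy} (count $|I_\Phi^\lambda|$, then exhibit that many roots), but you implement both steps differently. For the count, the paper stays intrinsic: it manufactures the roots with $n_\lambda \ge 2$ inside $\Sigma_{\lambda,\Phi}$ by hand --- $2\lambda+\beta_l$ and $2\lambda+\beta_l+\beta_m$ via $A_{\lambda,\alpha_1}=-2$ and two applications of Proposition~\ref{proposition:strings}~(\ref{proposition:strings:5}), plus $2\lambda$ and $2(\lambda+\beta_l)$ in the $BC$ case --- and subtracts from the known totals; you instead pass to the coordinate model $\lambda=e_{n+1}$, $\alpha_i=e_{n+1-i}-e_{n+2-i}$ and read off that the roots with $n_\lambda\ge 2$ are exactly the $e_i+e_j$ (and the $2e_i$ for $BC$). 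Your route is more transparent, at the mild cost of invoking the classification: the stated Dynkin diagram pins down $\Sigma_{\lambda,\Phi}$ only up to the $B_{n+1}$/$BC_{n+1}$ ambiguity, which the proposition's hypothesis resolves, so the identification with the standard model is legitimate. For the construction step, the paper obtains $\lambda+\beta_l$ from $A_{\beta_l,\lambda}=-1$ and Proposition~\ref{proposition:strings}~(\ref{proposition:strings:4}), whereas you apply Lemma~\ref{lemma:sum:connected:diagram} to the connected initial segments $\{\lambda,\alpha_1,\dots,\alpha_l\}$ of $\Pi_{\lambda,\Phi}$; this is equally valid and makes positivity and string membership immediate. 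Your length argument --- the induction in which $A_{\alpha_l,\gamma_{l-1}}=-1$ is forced by Proposition~\ref{proposition:simple:basis} (killing $\gamma_{l-1}-\alpha_l$) together with Proposition~\ref{proposition:strings}~(\ref{proposition:strings:3}) --- replaces the paper's one-line computation $|\lambda+\beta_l|^2=|\lambda|^2+|\beta_l|^2+|\beta_l|^2A_{\beta_l,\lambda}$, and it is here that the hypothesis $|\lambda|\le|\alpha_l|$ genuinely enters. A real merit of your write-up is that you justify why $\lambda$ must be the short root in the $B_{n+1}$/$BC_{n+1}$ cases when $n\ge 2$ (short roots there are mutually orthogonal, so they cannot carry a connected $A_n$), a point the paper uses silently when it asserts $A_{\lambda,\alpha_1}=-2$; for $n=1$, where $B_2\cong C_2$, the type label $(A_1,B_2)$ versus $(A_1,C_2)$ is itself the convention fixing which root is short, so your argument together with the hypothesis covers all cases, and this is precisely what separates the present proposition from the $C_{n+1}$ case of Proposition~\ref{proposition:c:string:type}.
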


\begin{proof}
According to Subsection~\ref{subsection:strategy}, the number of roots in the $\Phi$-string of $\lambda$ will be at most: the number of positive roots of the root system $\Sigma_{\lambda, \Phi} \cong A_{n+1}$ (respectively $\Sigma_{\lambda, \Phi} \cong B_{n+1}$ or $\Sigma_{\lambda, \Phi} \cong BC_{n+1}$), minus the number of positive roots of $\Sigma_\Phi \cong A_n$. We will see that $|I_{\Phi}^{\lambda}| \leq n+1$, which is direct when $\Sigma_{\lambda, \Phi} \cong A_{n+1}$ (see~\cite[p.~684]{K}). 

Note that $\beta_k = \sum_{i=1}^k \alpha_i$ is in $\Sigma$ and $A_{\beta_m, \beta_l} = 1$, for each $k$, $l$, $m \in \{1, \dots, n\}$ with $m < l$, as follows from Corollary~\ref{corollary:an}. If $\Sigma_{\lambda, \Phi}$ is not of type $A_{n+1}$, we get that $\beta_l + 2 \lambda$ and $\beta_m + \beta_l + 2 \lambda$ are in $\Sigma_{\lambda, \Phi}$ (but not in $I_\Phi^\lambda$) for each $l$, $m \in \{1, \dots, n\}$ with $m < l$, by using Proposition~\ref{proposition:strings}~(\ref{proposition:strings:5}) twice, first for $A_{\lambda, \beta_l} = A_{\lambda, \alpha_1} = -2$ and then for $A_{\beta_m, 2\lambda+\beta_l} = -1$. Thus, if $\Sigma_{\lambda, \Phi} \cong B_{n+1}$, then $|I_{\Phi}^{\lambda}| \leq n+1$ again (see~\cite[p.~684]{K}). Finally, if $\Sigma_{\lambda, \Phi} \cong BC_{n+1}$, then $2 \lambda$ and $2 (\beta_l + \lambda)$ are roots, as follows from combining $A_{\beta_l, \beta_l + 2\lambda} = 0$ with the fact that $2 \lambda$ is a root, and with Proposition~\ref{proposition:strings}~(\ref{proposition:strings:5}), for each $l \in \{1, \dots, n\}$. In summary, we always have $|I_{\Phi}^{\lambda}| \leq n+1$ (see~\cite[p.~339]{Jurgen}).

Note that $A_{\beta_l, \lambda} = -1$ and then we deduce that $\lambda + \beta_l$ is a root, for each $l \in \{ 1, \dots, n\}$, as follows from Proposition~\ref{proposition:strings}~(\ref{proposition:strings:4}). Thus, the $\Phi$-string of $\lambda$ consists of at least the root $\lambda$ and the $n$ roots of the form $\lambda + \beta_l$, with $l \in \{ 1, \dots, n\}$. Since $|I_{\Phi}^{\lambda}| \leq n+1$, they are all. The claim concerning the length of the roots of the $\Phi$-string of $\lambda$ comes from the computation
\[
|\lambda+\beta_l|^2 = |\lambda|^2 + |\beta_l|^2 + |\beta_l|^2 A_{\beta_l, \lambda} = |\lambda|^2, \, \, \text{for each $l \in \{ 1, \dots, n\}$}. \qedhere
\]
\end{proof}

\begin{proposition}\label{proposition:c:string:type}
Let $ (\Sigma_\Phi, \Sigma_{\lambda, \Phi}) \cong (A_n, C_{n+1})$, with $n \geq 1$, and let 
\\
\begin{equation*}\label{string:c:1}
\begin{tikzpicture}[scale=1.5]
\draw (0, 0) circle (0.1);
\draw (1, 0) circle (0.1);
\draw (3, 0) circle (0.1);
\draw (0.1, -0.04) -- (0.9, -0.04);
\draw (0.1, 0.04) -- (0.9, 0.04);
\draw (1., -0.3) node {$\alpha _1$};
\draw (1.1, 0.) -- (1.5, 0.);
\draw (1.7, 0.) -- (1.82, 0.);
\draw (1.94, 0.) -- (2.06, 0.);
\draw (2.18, 0.) -- (2.3, 0.);
\draw (2.5, 0) -- (2.9, 0);
\draw (3., -0.3) node {$\alpha _n$};
\draw (0, -0.3) node {$\lambda$};
\end{tikzpicture}
\end{equation*}
be the Dynkin diagram of $\Sigma_{\lambda, \Phi}$. Then
\[
I_{\Phi}^{\lambda} = \{ \lambda \} \cup \left\{ \lambda + \sum_{i=1}^l \alpha_i, \, \lambda + \sum_{i=1}^l \alpha_i + \sum_{j=1}^m \alpha_j\, : \, \substack{  1 \leq l \leq n \\ \, 1 \leq m \leq l} \right\}.
\]
Moreover, for any $k$, $l$, $m \in \{1, \dots n\}$ with $m < l$, we have
\[
|\lambda|^2 = |\lambda + 2 \sum_{i=1}^k \alpha_i|^2 = 2 |\lambda + \sum_{i=1}^l \alpha_i + \sum_{j=1}^m \alpha_j|^2.
\]
\end{proposition}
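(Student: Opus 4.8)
The plan is to follow the two-step strategy of Subsection~\ref{subsection:strategy}: first bound $|I_\Phi^\lambda|$ from above by a counting argument, and then exhibit exactly that many roots of the prescribed form, so that the two bounds meet. For the upper bound, I would use that every element of the $\Phi$-string is of the form $\lambda+\sum_{\alpha\in\Phi}n_\alpha\alpha$, hence has $\lambda$-coordinate equal to $1$ with respect to $\Pi_{\lambda,\Phi}$ and (by Proposition~\ref{proposition:root:system:lambda:phi}~(\ref{proposition:root:system:lambda:phi:3})) non-negative $n_\alpha$; thus it is one of the positive roots of $\Sigma_{\lambda,\Phi}\cong C_{n+1}$ not spanned by $\Phi$. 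Since $C_{n+1}$ has $(n+1)^2$ positive roots and $\Sigma_\Phi\cong A_n$ has $\binom{n+1}{2}$ of them (see~\cite[p.~684]{K}), this gives $|I_\Phi^\lambda|\le (n+1)^2-\binom{n+1}{2}=\binom{n+2}{2}$. I would not separately rule out a $\lambda$-coordinate equal to $2$: once I produce $\binom{n+2}{2}$ distinct string elements, this upper bound forces them to exhaust the positive roots with nonzero $\lambda$-coordinate.

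The construction rests on two Cartan-integer computations. By Corollary~\ref{corollary:an}, each $\beta_l=\sum_{i=1}^l\alpha_i$ is a root with $A_{\beta_m,\beta_l}=1$ for $m<l$ (and $A_{\beta_l,\beta_l}=2$); and since $\alpha_1$ is the only element of $\Phi$ meeting $\lambda$ in the Dynkin diagram, through a double bond with $\alpha_1$ short, one finds $A_{\beta_l,\lambda}=A_{\alpha_1,\lambda}=-2$ for every $l$. The first family, together with the diagonal $m=l$, comes from the $\beta_l$-string through $\lambda$: as $\lambda$ has minimum level in its $\Phi$-string, $\lambda-\beta_l$ is not a root, so $p=0$ in Proposition~\ref{proposition:strings}~(\ref{proposition:strings:5}), whence $q=-A_{\beta_l,\lambda}=2$ and both $\lambda+\beta_l$ and $\lambda+2\beta_l$ are roots. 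For $m<l$, I would then evaluate $A_{\beta_m,\lambda+\beta_l}=A_{\beta_m,\lambda}+A_{\beta_m,\beta_l}=-2+1=-1<0$ and apply Proposition~\ref{proposition:strings}~(\ref{proposition:strings:4}) to obtain the root $\lambda+\beta_l+\beta_m$. These elements are pairwise distinct because their coordinate vectors in the basis $\Pi_{\lambda,\Phi}$ are distinct, and there are $1+n+\binom{n+1}{2}=\binom{n+2}{2}$ of them, matching the upper bound; hence the displayed set is exactly $I_\Phi^\lambda$.

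I expect the only genuine obstacle to be the diagonal case $m=l$, that is, producing $\lambda+2\beta_l$: here $A_{\beta_l,\lambda+\beta_l}=0$, so Proposition~\ref{proposition:strings}~(\ref{proposition:strings:4}) yields nothing, and one must instead invoke the full $\beta_l$-string through $\lambda$ together with the minimality of $\lambda$ to force $q=2$. The length identities are then a routine expansion. From $\langle\lambda,\beta_k\rangle=\tfrac12 A_{\beta_k,\lambda}\,|\beta_k|^2=-|\beta_k|^2$ one gets $|\lambda+2\beta_k|^2=|\lambda|^2+4\langle\lambda,\beta_k\rangle+4|\beta_k|^2=|\lambda|^2$; and using the double-bond length ratio $|\lambda|^2=2|\beta_l|^2$ one first obtains $|\lambda+\beta_l|^2=|\lambda|^2-|\beta_l|^2=\tfrac12|\lambda|^2$, after which $A_{\beta_m,\lambda+\beta_l}=-1$ gives $|\lambda+\beta_l+\beta_m|^2=|\lambda+\beta_l|^2=\tfrac12|\lambda|^2$ for $m<l$, which is the asserted $|\lambda|^2=2|\lambda+\beta_l+\beta_m|^2$.
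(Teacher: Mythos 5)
Your proposal is correct and follows essentially the same route as the paper: the same upper bound $|I_\Phi^\lambda|\le(n+1)(n+2)/2$ obtained by subtracting the positive roots of $\Sigma_\Phi\cong A_n$ from those of $\Sigma_{\lambda,\Phi}\cong C_{n+1}$, the same construction of $\lambda+\beta_l$ and $\lambda+2\beta_l$ from $A_{\beta_l,\lambda}=-2$ via the $\beta_l$-string through $\lambda$ (the paper reads $q\ge 2$ directly from $p-q=-2$, so your appeal to minimality for $p=0$ is a harmless extra step), then $\lambda+\beta_l+\beta_m$ from $A_{\beta_m,\lambda+\beta_l}=-1$, and the same length computations. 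The only cosmetic difference is that you invoke Proposition~\ref{proposition:strings}~(\ref{proposition:strings:4}) where the paper cites part~(\ref{proposition:strings:5}) for the $m<l$ roots, which is immaterial.
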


\begin{proof}
According to Subsection~\ref{subsection:strategy}, the number of roots of $I_{\Phi}^{\lambda}$ will be at most: the number of positive roots of the root system $\Sigma_{\lambda, \Phi} \cong C_{n+1}$, minus the number of positive roots of $\Sigma_\Phi \cong A_n$. Thus, we have $|I_{\Phi}^{\lambda}| \leq (n+1)(n+2)/2$ (see~\cite[p. 685]{K}).

Recall that $\beta_l = \sum_{i=1}^l \alpha_i$ is a positive root of $\Sigma$ spanned by $\Phi$, for each $l \in \{1, \dots, n\}$, by means of Corollary~\ref{corollary:an}. Note also that $|\lambda|^2 = 2|\alpha|^2$ for any $\alpha \in \Sigma_\Phi$. Thus $A_{\beta_l, \lambda} = A_{\alpha_1, \lambda} = -2$, and from Proposition~\ref{proposition:strings}~\eqref{proposition:strings:5} we get that $\lambda + \varepsilon \beta_l$ is a positive root (in the $\Phi$-string of $\lambda$) for each $l \in \{1, \dots, n\}$ and each $\varepsilon \in \{1,2\}$. Now, recalling $A_{\beta_m, \beta_l} = 1$ for any $l$, $m \in \{1, \dots, n\}$ with $m < l$, from Corollary~\ref{corollary:an}, we have $A_{\beta_m, \lambda + \beta_l} = -1$ whenever $1\leq m < l \leq n$. Using again Proposition~\ref{proposition:strings}~\eqref{proposition:strings:5}, this implies that $\lambda+\beta_l+\beta_m$ is a root in the $\Phi$-string of $\lambda$ when $1\leq m < l \leq n$. Therefore, $\lambda$, $\lambda + \varepsilon \beta_l$, and $\lambda+\beta_l+\beta_m$, for each $l$, $m \in \{1, \dots, n\}$ with $m < l$, and each $\varepsilon \in \{1,2\}$, they are $(n+1)(n+2)/2$ roots in the $\Phi$-string of $\lambda$. Since $|I_{\Phi}^{\lambda}| \leq (n+1)(n+2)/2$, they are all of them.

In order to finish the proof, we just need to justify the claim concerning the length of the roots in $I_{\Phi}^{\lambda}$. Note that all the roots of the form $\beta_l$, with $l \in \{1, \dots, n\}$, have the same length, as they belong to $\Sigma_\Phi \cong A_n$. Recalling that $|\lambda|^2 = 2|\alpha|^2$ for any $\alpha \in \Sigma_\Phi$, we have
\[
|\lambda + \varepsilon \beta_l|^2 = |\lambda|^2 + \varepsilon^2 |\beta_l|^2 + \varepsilon |\beta_l|^2 A_{\beta_l, \lambda} = \varepsilon|\beta_l|^2,
\]
for each $l \in \{1, \dots, n\}$ and each $\varepsilon \in \{1,2\}$. Moreover,
\[
|\lambda + \beta_l + \beta_m|^2 = |\lambda + \beta_l|^2 + |\beta_m|^2 + |\beta_m|^2 A_{\beta_m, \lambda+\beta_l} = |\lambda + \beta_l|^2 =  |\beta_l|^2
\]
for any $l$, $m \in \{1, \dots, n\}$ with $m < l$. This completes the proof.
\end{proof}

\begin{remark}\label{remark:minimum:level:characterization}
In Proposition~\ref{proposition:c:string:type}, taking $n =2$ and $\gamma = \lambda + \alpha_1$, we have that $A_{\alpha_1, \gamma} = 0$ and $A_{\alpha_2, \gamma} = -1$. However, $\gamma$ is not of minimum level in the $\Phi$-string of $\lambda$. Thus, we cannot skip the condition on the lengths for the second implication in Proposition~\ref{proposition:root:minimum:level}.
\end{remark}

\begin{proposition}\label{proposition:d:string:type}
Let $ (\Sigma_\Phi, \Sigma_{\lambda, \Phi}) \cong (A_n, D_{n+1})$, with $n \geq 3$, and let 
\begin{equation*}
\begin{tikzpicture}[scale=1.5]
\draw (0, 0) circle (0.1);
\draw (1, 0) circle (0.1);
\draw (1, 1) circle (0.1);
\draw (3, 0) circle (0.1);
\draw (0.1, 0.) -- (0.9, 0.);
\draw (1., -0.3) node {$\alpha _2$};
\draw (1., 0.1) -- (1., 0.9);
\draw (1., 1.3) node {$\alpha _1$};
\draw (1.1, 0.) -- (1.5, 0.);
\draw (1.7, 0.) -- (1.82, 0.);
\draw (1.94, 0.) -- (2.06, 0.);
\draw (2.18, 0.) -- (2.3, 0.);
\draw (2.5, 0) -- (2.9, 0);
\draw (3., -0.3) node {$\alpha _n$};
\draw (0, -0.3) node {$\lambda$};
\end{tikzpicture}
\end{equation*}
be the Dynkin diagram of $\Sigma_{\lambda, \Phi}$. Then all the roots in $I_{\Phi}^{\lambda}$ have the same length and
\[
I_{\Phi}^{\lambda}= \{ \lambda \} \cup \left\{ \lambda + \sum_{i=2}^l \alpha_i, \,  \lambda + \sum_{i=2}^l \alpha_i + \sum_{j=1}^k \alpha_j \, : \, \substack{2 \leq l \leq n \\ 1 \leq k \leq l-1}   \right\}.
\]
\end{proposition}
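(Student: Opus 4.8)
The plan is to follow the two-step strategy of Subsection~\ref{subsection:strategy}, exactly as in the proofs of Proposition~\ref{proposition:a:string:type} and Proposition~\ref{proposition:c:string:type}. First I would bound $|I_\Phi^\lambda|$ from above by the number of positive roots of $\Sigma_{\lambda,\Phi}\cong D_{n+1}$ minus the number of positive roots of $\Sigma_\Phi\cong A_n$. Since $D_{n+1}$ has $(n+1)n$ positive roots and $A_n$ has $n(n+1)/2$ of them (see~\cite{K}), this gives $|I_\Phi^\lambda|\le n(n+1)/2$. It then remains to exhibit exactly $n(n+1)/2$ distinct roots of the two announced shapes; matching this count with the upper bound forces them to be all of $I_\Phi^\lambda$ (and, as a by-product, shows that no positive root of $\Sigma_{\lambda,\Phi}$ has $\lambda$-coefficient $\ge 2$). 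The length assertion is then free: $D_{n+1}$ is simply laced, so every root of $I_\Phi^\lambda\subset\Sigma_{\lambda,\Phi}$ has length $|\lambda|$.

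For the construction I would first record the adjacency data read off the Dynkin diagram of $\Sigma_{\lambda,\Phi}$: inside $\Pi_{\lambda,\Phi}$ the root $\lambda$ is joined only to $\alpha_2$, and since all roots have equal length this yields $A_{\alpha_2,\lambda}=A_{\lambda,\alpha_2}=-1$ and $A_{\alpha_i,\lambda}=0$ for $i\ne 2$. Writing $\beta_l=\sum_{i=2}^l\alpha_i$ (for $2\le l\le n$) and $\delta_k=\sum_{j=1}^k\alpha_j$ (for $1\le k\le n$), the first family is immediate: the set $\{\lambda,\alpha_2,\dots,\alpha_n\}$ is a connected chain of type $A_n$, so Lemma~\ref{lemma:sum:connected:diagram} (equivalently $A_{\beta_l,\lambda}=-1$ together with Proposition~\ref{proposition:strings}~(\ref{proposition:strings:4})) shows that $\lambda+\beta_l$ is a root for each $2\le l\le n$, producing $n-1$ roots.

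The core of the argument is the second family $\lambda+\beta_l+\delta_k$ with $2\le l\le n$ and $1\le k\le l-1$. Having already obtained $\lambda+\beta_l$, I would add $\delta_k$ to it by evaluating $A_{\delta_k,\lambda+\beta_l}=A_{\delta_k,\lambda}+A_{\delta_k,\beta_l}$ (since $\Sigma_{\lambda,\Phi}$ is simply laced, Cartan integers here are symmetric and split additively when a root is written out in simple roots). Using Corollary~\ref{corollary:an} for the $A_n$-roots $\delta_j=\sum_{i=1}^j\alpha_i$ — in particular $A_{\delta_k,\delta_l}=1$ for $k<l$ and the identity $\beta_l=\delta_l-\alpha_1$ — one finds $A_{\delta_k,\beta_l}=-1$ if $k=1$ and $0$ if $k\ge 2$, whereas the adjacency data give $A_{\delta_k,\lambda}=0$ if $k=1$ and $-1$ if $k\ge 2$. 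The point, which I expect to be the main (if mild) obstacle, is that these two contributions conspire so that $A_{\delta_k,\lambda+\beta_l}=-1$ \emph{uniformly} for every admissible pair $(l,k)$, the negativity coming from the $\beta_l$-side when $k=1$ and from the $\lambda$-side when $k\ge 2$. Proposition~\ref{proposition:strings}~(\ref{proposition:strings:4}) then gives that $\delta_k+(\lambda+\beta_l)=\lambda+\beta_l+\delta_k$ is a root, yielding $\sum_{l=2}^{n}(l-1)=n(n-1)/2$ further roots.

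Finally I would check distinctness and the total count. Reading off the coordinate vectors in the basis $\Phi$, the two families are disjoint (the $\alpha_1$-coefficient is $0$ in the first and $1$ in the second), and distinct indices give distinct roots: in $\lambda+\beta_l+\delta_k$ the largest index carrying a nonzero coefficient is $l$, while the indices carrying coefficient $2$ are exactly $\{2,\dots,k\}$, so the pair $(l,k)$ is recovered. Hence I have produced $1+(n-1)+n(n-1)/2=n(n+1)/2$ distinct roots of $I_\Phi^\lambda$, which by the upper bound must be all of them, completing the proof.
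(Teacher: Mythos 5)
Your proposal is correct, and its counting skeleton (bound $|I_\Phi^\lambda|$ above by $|\Sigma^+_{\lambda,\Phi}|-|\Sigma^+_\Phi| = (n+1)n - n(n+1)/2 = n(n+1)/2$, then exhibit that many distinct roots) is exactly the paper's; where you genuinely diverge is in how the second family is produced. The paper never computes a Cartan integer against $\lambda+\beta_l$ directly: it obtains $\lambda_l = \lambda + \sum_{i=2}^l \alpha_i$ from Proposition~\ref{proposition:a:string:type} applied to $\Phi\setminus\{\alpha_1\}$, then uses Proposition~\ref{proposition:root:minimum:level} to see that $\lambda_l$ is of minimum level in its $\{\alpha_1,\dots,\alpha_{l-1}\}$-string, so that by Proposition~\ref{proposition:root:system:lambda:phi} the set $\{\alpha_1,\dots,\alpha_{l-1}\}\cup\{\lambda_l\}$ is a simple system of type $A_l$, and a second application of Proposition~\ref{proposition:a:string:type} delivers the roots $\lambda_l+\sum_{j=1}^k\alpha_j$. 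You instead verify $A_{\delta_k,\lambda+\beta_l}=-1$ by hand, and your arithmetic checks out: $A_{\delta_k,\lambda}$ is $0$ for $k=1$ and $-1$ for $k\ge 2$, while $A_{\delta_k,\beta_l}=A_{\delta_k,\delta_l}-A_{\delta_k,\alpha_1}$ equals $1-2=-1$ for $k=1$ and $1-1=0$ for $2\le k\le l-1$, so the sum is uniformly $-1$ and Proposition~\ref{proposition:strings}~(\ref{proposition:strings:4}) applies (the sum cannot vanish since both summands are positive roots); note only that additivity of $A_{\delta_k,\cdot}$ in the second slot is automatic from bilinearity of the inner product and needs no simply-laced hypothesis, which is required only for the symmetry $A_{\alpha_2,\lambda}=A_{\lambda,\alpha_2}$ you also use. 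What the paper's route buys is uniformity: the same bootstrap through Propositions~\ref{proposition:root:system:lambda:phi}, \ref{proposition:root:minimum:level} and~\ref{proposition:a:string:type} recurs almost verbatim in the $B_n$, $C_n$ and $D_n$ subsections, so no new computation is ever done. What yours buys is self-containedness, an explicit distinctness check (the paper leaves the distinctness of its $n(n+1)/2$ roots implicit), and the free by-product that no positive root of $\Sigma_{\lambda,\Phi}$ has $\lambda$-coefficient $\ge 2$ --- consistent with $\lambda$ sitting at a fork tip of the $D_{n+1}$ diagram, where the highest root has coefficient $1$. The length claim via simply-lacedness coincides with the paper's closing remark.
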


\begin{proof}
Note that $n \leq 2$ would lead to $(\Sigma_\Phi, \Sigma_{\lambda, \Phi}) \cong (A_n, A_{n+1})$, analyzed in Proposition~\ref{proposition:a:string:type}. According to Subsection~\ref{subsection:strategy}, the number of roots in the $\Phi$-string of $\lambda$ is at most: the number of positive roots of the root system $\Sigma_{\lambda, \Phi} \cong D_{n+1}$, minus the number of positive roots of $\Sigma_\Phi \cong A_n$. Thus, $|I_{\Phi}^{\lambda}| \leq n (n+1) /2$ (see~\cite[p. 684-685]{K}). 

Note that all the roots in the $\Phi \backslash \{ \alpha_1 \}$-string of $\lambda$ are contained in the $\Phi$-string of $\lambda$. Since $\lambda$ is the root of minimum level in its $\Phi$-string, then it is also the root of minimum level in its $\Phi \backslash \{ \alpha_1 \}$-string. From Proposition~\ref{proposition:a:string:type} we get that the $\Phi \backslash \{ \alpha_1 \}$-string of $\lambda$ consists of the root $\lambda$ and the roots of the form
\[
\lambda_l  = \lambda + \sum_{i=2}^{l} \alpha_i, \, \, \text{for each $l \in \{ 2, \dots, n \}$.}
\]
The root $\lambda_l$ is the root of minimum level in its $\{ \alpha_1, \dots, \alpha_{l-1}\}$-string for each $l \in \{ 2, \dots, n \}$, as follows from Proposition~\ref{proposition:root:minimum:level} taking into account $A_{\alpha_1,\lambda_l} <0$ and $A_{\alpha_i,\lambda_l}=0$ for each $i\in\{2,\dots,l-1\}$. Since $\{\alpha_1, \dots, \alpha_{l-1}\}$ is the simple system for an $A_{l-1}$ root system, then $\{ \alpha_1, \dots, \alpha_{l-1} \} \cup \{ \lambda_l\}$ is a simple system for an $A_l$ root system, for each $l \in \{2, \dots, n \}$, as follows from Proposition~\ref{proposition:root:system:lambda:phi}. Using again Proposition~\ref{proposition:a:string:type}, we obtain that the $\{ \alpha_1, \dots, \alpha_{l-1} \}$-string of $\lambda_l$ consists of the root $\lambda_l$ and the roots of the form
\[
\lambda_l^k =  \lambda_l + \sum_{j=1}^{k} \alpha_j, \, \, \text{for each $k \in \{1, \dots, l-1 \}$ and each $l \in \{ 2, \dots, n \}$.}
\]
Thus, $\lambda$, $\lambda_l$, with $l \in \{2, \dots, n\}$, and $\lambda_l^k$, with $l \in \{2, \dots, n\}$ and $k \in \{1, \dots, l-1 \}$, they are $n(n+1)/2 \geq |I_{\Phi}^{\lambda}|$ roots in the $\Phi$-string of $\lambda$, so they are all of them. They all have the same length since all the roots in a $D_{n+1}$ root system have the same length.
\end{proof}

\subsection{$\Sigma_\Phi$ is a $B_n$ or a $BC_n$ root subsystem of $\Sigma$.}\label{subsection:bn}

In this subsection, $\Phi = \{ \alpha_1, \dots, \alpha_n \}$ will be a proper subset of $\Pi$ generating a $B_n$ or a $BC_n$ root system $\Sigma_\Phi$, and $\lambda \in \Sigma^\Phi$ will denote, as usual, the root of minimum level in its $\Phi$-string. Then, $\Sigma_{\lambda, \Phi}$ must be: either a $B_{n+1}$ or an $F_4$ (provided that $n=3$) root system, when $\Sigma_\Phi \cong B_n$; or a $BC_{n+1}$ root system, when $\Sigma_\Phi \cong BC_n$. The $F_4$ case is addressed in Proposition~\ref{proposition:b:f:string:type}.

\begin{proposition}\label{proposition:b:string:type}
Let $(\Sigma_\Phi, \Sigma_{\lambda, \Phi})$ be a pair of type $(B_n, B_{n+1})$ or $(BC_n, BC_{n+1})$, $n \geq 2$, and let 
\begin{equation*}
\begin{tikzpicture}[scale=1.5]
\draw (0, 0) circle (0.1);
\draw (1, 0) circle (0.1);
\draw (3, 0) circle (0.1);
\draw (4, 0) circle (0.1);
\draw (3.1, -0.04) -- (3.9, -0.04);
\draw (3.1, 0.04) -- (3.9, 0.04);
\draw (4., -0.3) node {$\alpha _n$};
\draw (0.1, 0.) -- (0.9, 0.);
\draw (1., -0.3) node {$\alpha _1$};
\draw (1.1, 0.) -- (1.5, 0.);
\draw (1.7, 0.) -- (1.82, 0.);
\draw (1.94, 0.) -- (2.06, 0.);
\draw (2.18, 0.) -- (2.3, 0.);
\draw (2.5, 0) -- (2.9, 0);
\draw (3., -0.3) node {$\alpha_{n-1}$};
\draw (0, -0.3) node {$\lambda$};
\end{tikzpicture}
\end{equation*}
be the Dynkin diagram of $\Sigma_{\lambda, \Phi}$. Then
\[
I_{\Phi}^{\lambda} = \left\{ \lambda \right\} \cup \left\{\lambda + \sum_{i=1}^l \alpha_i, \lambda + \sum_{i=1}^n \alpha_i  + \sum_{j=0}^k \alpha_{n-j}\, : \, \substack{1 \leq l \leq n \\ 0 \leq k \leq n-1}  \right\}, 
\]
and $|\beta|^2 = 2 | \lambda + \sum_{i=1}^{n}  \alpha_i |^2$ for any $\beta \in I_{\Phi}^{\lambda}$ different from $\lambda + \sum_{i=1}^{n}  \alpha_i$.
\end{proposition}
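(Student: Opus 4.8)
The plan is to apply the two-step counting strategy of Subsection~\ref{subsection:strategy}: bound $|I_\Phi^\lambda|$ from above and then exhibit that many roots explicitly. For the pair $(B_n,B_{n+1})$ the upper bound is the number of positive roots of $B_{n+1}$ minus the number of positive roots of $B_n$, namely $(n+1)^2-n^2=2n+1$, provided no positive root of $\Sigma_{\lambda,\Phi}$ has $\lambda$-coefficient at least $2$. This last point holds because the highest root of $B_{n+1}$ is $\lambda+2\sum_{i=1}^n\alpha_i$, whose $\lambda$-coefficient is $1$, and every positive root is dominated by it. For the pair $(BC_n,BC_{n+1})$ I would reduce to the previous case: the roots of $BC_{n+1}$ that do not already belong to its underlying $B_{n+1}$ subsystem are the doubled roots $2\gamma$ with $\gamma$ short, and these have even $\lambda$-coefficient, hence never equal to $1$; therefore they contribute nothing to $I_\Phi^\lambda$, the bound $|I_\Phi^\lambda|\le 2n+1$ persists, and in fact the two cases yield literally the same string.

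Next I would produce the \emph{first family}. By Lemma~\ref{lemma:sum:connected:diagram} each $\beta_l=\sum_{i=1}^l\alpha_i$ is a positive root, and since $\lambda$ is joined in the Dynkin diagram only to $\alpha_1$ we have $\langle\lambda,\beta_l\rangle=\langle\lambda,\alpha_1\rangle<0$; Proposition~\ref{proposition:strings}~(\ref{proposition:strings:4}) then gives $\lambda+\beta_l\in I_\Phi^\lambda$ for every $1\le l\le n$. This yields $\lambda$ together with $n$ further roots, among them the distinguished root $\mu=\lambda+\sum_{i=1}^n\alpha_i=\lambda+\beta_n$.

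The \emph{second family} is the delicate part, because it requires crossing from the long roots to the short root $\mu$ and back. The key observation is that $A_{\alpha_n,\lambda+\beta_{n-1}}=-2$ while $\lambda+\beta_{n-1}-\alpha_n$ is not a root, so by Proposition~\ref{proposition:strings}~(\ref{proposition:strings:5}) the $\alpha_n$-string through $\lambda+\beta_{n-1}$ has $q=2$; it therefore contains $\mu=\lambda+\beta_{n-1}+\alpha_n$ and, crucially, $\nu_0=\lambda+\beta_{n-1}+2\alpha_n=\lambda+\sum_{i=1}^n\alpha_i+\alpha_n\in I_\Phi^\lambda$. Setting $\nu_k=\nu_{k-1}+\alpha_{n-k}$, I would then check inductively that $\langle\nu_{k-1},\alpha_{n-k}\rangle<0$ for $1\le k\le n-1$, so that Proposition~\ref{proposition:strings}~(\ref{proposition:strings:4}) makes each $\nu_k=\lambda+\sum_{i=1}^n\alpha_i+\sum_{j=0}^k\alpha_{n-j}$ a root of $I_\Phi^\lambda$. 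This produces $n$ roots in the second family; together with $\lambda$ and the first family we obtain $1+n+n=2n+1$ distinct roots, matching the upper bound, so they exhaust $I_\Phi^\lambda$.

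Finally, for the length assertion I would compute $|\lambda+\beta_l|^2=|\lambda|^2+A_{\beta_l,\lambda}|\beta_l|^2+|\beta_l|^2=|\beta_l|^2$, using $A_{\beta_l,\lambda}|\beta_l|^2=2\langle\lambda,\alpha_1\rangle=-|\lambda|^2$; this is long for $l\le n-1$ and equals $|\mu|^2=\tfrac12|\lambda|^2$ for $l=n$, because $\beta_n=\beta_{n-1}+\alpha_n$ absorbs the short simple root $\alpha_n$. A similar computation, together with $\langle\mu,\alpha_n\rangle=0$ and $\langle\nu_{k-1},\alpha_{n-k}\rangle=-1$, shows every second-family root is long, so $\mu$ is the unique short root and $|\beta|^2=2|\mu|^2$ for all other $\beta\in I_\Phi^\lambda$. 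I expect the main obstacle to be precisely the long-to-short transition at $\mu$: since $A_{\alpha_n,\mu}=0$ one cannot invoke Proposition~\ref{proposition:strings}~(\ref{proposition:strings:4}) directly, which is why the construction is routed through the $\alpha_n$-string of $\lambda+\beta_{n-1}$, and the subsequent bookkeeping of the Cartan integers $A_{\alpha_{n-k},\nu_{k-1}}$ must be carried out with care.
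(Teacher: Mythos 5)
Your proposal is correct and takes essentially the same route as the paper: the same upper bound $|I_{\Phi}^{\lambda}|\le 2n+1$ (your parity argument for the doubled roots in the $BC$ case replaces the paper's explicit identification of the extra root $2\lambda+2\sum_{i=1}^{n}\alpha_i$ with $\lambda$-coefficient two), and the same construction pivoting through the $\alpha_n$-string of $\lambda+\beta_{n-1}$ with Cartan integer $-2$ to obtain $\mu$ and $\nu_0$. The only cosmetic difference is that you verify the two $A$-type legs by direct inductive Cartan-integer computations, where the paper instead invokes Proposition~\ref{proposition:a:string:type} together with Proposition~\ref{proposition:root:minimum:level}; your computations (including the length bookkeeping) check out.
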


\begin{proof}
According to Subsection~\ref{subsection:strategy}, the number of roots in the $\Phi$-string of $\lambda$ is at most: the number of positive roots of the root system $\Sigma_{\lambda, \Phi} \cong B_{n+1}$ (respectively $\Sigma_{\lambda, \Phi} \cong BC_{n+1}$), minus the number of positive roots of $\Sigma_\Phi \cong B_n$ (respectively $\Sigma_{\Phi} \cong BC_{n}$). Recall from the proof of Proposition~\ref{proposition:a:string:type} that $2 \lambda + 2 \sum_{\alpha\in\Phi} \alpha$ is a root (not in $I_\Phi^\lambda)$ when $\Sigma_{\lambda, \Phi} \cong BC_{n+1}$. Thus, for both cases we get $|I_{\Phi}^{\lambda}| \leq 2n + 1$ (see~\cite[p. 339]{Jurgen}  and~\cite[p. 684]{K}).

Since $\lambda$ is of minimum level in its $\Phi$-string, it is also of minimum level in its $\Phi \backslash \{ \alpha_n\}$-string. But $\Sigma_{\lambda, \Phi \backslash \{ \alpha_n\} }$ is an $A_n$ root system (see Proposition~\ref{proposition:root:system:lambda:phi}) and hence, from Proposition~\ref{proposition:a:string:type} we get that 
\[
\lambda_l =  \lambda + \sum_{i=1}^l \alpha_i, \, \, \, \text{for each $l \in \{ 1, \dots, n -1 \}$},
\]
is a root in the $\Phi$-string of $\lambda$, and they all have length $|\lambda|$. Moreover, $A_{\alpha_n, \lambda_{n-1}} =A_{\alpha_n, \alpha_{n-1}} = -2$ and hence $\lambda_n = \lambda_{n-1} + \alpha_n$ and $\lambda_n + \alpha_n$ are roots by means of Proposition~\ref{proposition:strings}~(\ref{proposition:strings:5}). Note that $2|\lambda_n|^2=|\lambda_n+\alpha_n|^2=|\lambda|^2$. Now, $\lambda_n + \alpha_n$ is the root of minimum level in its $\Phi \backslash \{ \alpha_n\}$-string, as follows from combining Proposition~\ref{proposition:root:minimum:level} with $A_{\alpha_{n-1}, \lambda_n + \alpha_n} = -1$ and $A_{\alpha_{l}, \lambda_n + \alpha_n} = 0$ for each $l$ in $\{ 1, \dots, n-2 \}$. Since $A_{\lambda_n + \alpha_n, \alpha_{n-1}} = -1$, then $\Sigma_{\lambda_n + \alpha_n, \Phi \backslash \{ \alpha_n \}}$ is an $A_n$ root system, and using Proposition~\ref{proposition:a:string:type}, we deduce that
\begin{equation}\label{equation:b:bc}
\lambda^k = \lambda + \sum_{i=1}^{n}  \alpha_i + \sum_{j=0}^k  \alpha_{n-j}, \, \, \text{for each $k$ in $\{1, \dots, n-1\}$},
\end{equation}
is a root in the $\Phi$-string of $\lambda$ of length $|\lambda_n + \alpha_n| = |\lambda|$. Note that $k = 0$ in~\eqref{equation:b:bc} gives $\lambda_n + \alpha_n$. Thus, $\lambda$, $\lambda_l$ with $l \in \{1, \dots, n\}$, and $\lambda^k$ with $k \in \{0, \dots, n-1\}$, they give rise to $2n+1 \geq |I_\Phi^\lambda|$ roots in the $\Phi$-string of $\lambda$, so they are all of them.
\end{proof}

\subsection{$\Sigma_\Phi$ is a $C_n$ root subsystem of $\Sigma$} \label{subsection:cn}

In this subsection, $\Phi = \{ \alpha_1, \dots, \alpha_n \}$ will be a subset of $\Pi$ generating a $C_n$ root system $\Sigma_\Phi$. As usual, $\lambda \in \Sigma^\Phi$ will denote the root of minimum level in its $\Phi$-string. Note that $\Sigma_{\lambda, \Phi}$ must be either a $C_{n+1}$ or an $F_4$, provided that $n=3$, root system. The $F_4$ case is addressed in Proposition~\ref{proposition:c:f:string:type}.

\begin{proposition}\label{proposition:c:pure:string:type}
Let $ (\Sigma_\Phi, \Sigma_{\lambda, \Phi}) \cong (C_n, C_{n+1})$, $n \geq 3$, and let 
\\
\begin{equation*}
\begin{tikzpicture}[scale=1.5]
\draw (0, 0) circle (0.1);
\draw (1, 0) circle (0.1);
\draw (3, 0) circle (0.1);
\draw (4, 0) circle (0.1);
\draw (3.1, -0.04) -- (3.9, -0.04);
\draw (3.1, 0.04) -- (3.9, 0.04);
\draw (4., -0.3) node {$\alpha _n$};
\draw (0.1, 0.) -- (0.9, 0.);
\draw (1., -0.3) node {$\alpha _1$};
\draw (1.1, 0.) -- (1.5, 0.);
\draw (1.7, 0.) -- (1.82, 0.);
\draw (1.94, 0.) -- (2.06, 0.);
\draw (2.18, 0.) -- (2.3, 0.);
\draw (2.5, 0) -- (2.9, 0);
\draw (3., -0.3) node {$\alpha_{n-1}$};
\draw (0, -0.3) node {$\lambda$};
\end{tikzpicture}
\end{equation*}
be the Dynkin diagram of $\Sigma_{\lambda, \Phi}$. Then all the roots in $I_{\Phi}^{\lambda}$ have the same length, and
\begin{equation*}
I_{\Phi}^{\lambda} = \left\{ \lambda \right\} \cup \left\{\lambda + \sum_{i=1}^l \alpha_i, \,  \lambda + \sum_{i=1}^{n} \alpha_i +\sum_{j=1}^{k} \alpha_{n-j} \, :  \, \substack{1 \leq l \leq n \\ 1 \leq k \leq  n-1}  \right\}.
\end{equation*}
\end{proposition}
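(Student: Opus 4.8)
The plan is to follow the counting-and-construction strategy of Subsection~\ref{subsection:strategy}, in close parallel with the case $(B_n, B_{n+1})$ treated in Proposition~\ref{proposition:b:string:type}. First I would bound $|I_\Phi^\lambda|$ from above. By the strategy, $|I_\Phi^\lambda|$ equals the number of positive roots of $\Sigma_{\lambda,\Phi} \cong C_{n+1}$, minus the number of positive roots of $\Sigma_\Phi \cong C_n$, minus the number of positive roots of $\Sigma_{\lambda,\Phi}$ whose $\lambda$-coordinate $n_\lambda$ in~\eqref{equation:string:count} is at least $2$. Using the standard counts $(n+1)^2$ and $n^2$ for these two root systems (see~\cite[p.~685]{K}), it suffices to exhibit at least one positive root with $n_\lambda \geq 2$ in order to conclude $|I_\Phi^\lambda| \leq (n+1)^2 - n^2 - 1 = 2n$. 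The genuinely new feature relative to the $B$-case is precisely the presence of such a root, since for $(B_n, B_{n+1})$ every positive root not spanned by $\Phi$ already has $n_\lambda = 1$.

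To produce this root, I would take the highest root $\theta_\Phi = 2\sum_{i=1}^{n-1}\alpha_i + \alpha_n$ of $\Sigma_\Phi \cong C_n$. Since $\lambda$ is joined only to $\alpha_1$ in the Dynkin diagram of $\Sigma_{\lambda,\Phi}$, with $A_{\lambda,\alpha_1} = -1$ ($\lambda$ and $\alpha_1$ being short roots joined by a single bond), and $\lambda$ is orthogonal to $\alpha_2, \dots, \alpha_n$, a direct computation gives $A_{\lambda, \theta_\Phi} = 2A_{\lambda,\alpha_1} = -2$. As $\theta_\Phi$ is spanned by $\Phi$ while $\lambda$ is not, $\theta_\Phi - \lambda$ cannot be a root, so the $\lambda$-string through $\theta_\Phi$ has $p = 0$ and hence $q = -A_{\lambda,\theta_\Phi} = 2$ by Proposition~\ref{proposition:strings}~(\ref{proposition:strings:5}). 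Thus $\theta_\Phi + 2\lambda$ is a positive root with $n_\lambda = 2$ (in fact the highest root of $C_{n+1}$), which is not in $I_\Phi^\lambda$, yielding the desired bound $|I_\Phi^\lambda| \leq 2n$.

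It then remains to construct $2n$ roots in $I_\Phi^\lambda$, which I would do by splitting off the long root $\alpha_n$ and invoking Proposition~\ref{proposition:a:string:type} twice. Since $\lambda$ is of minimum level in its $\Phi$-string it is also of minimum level in its $\Phi \backslash \{\alpha_n\}$-string, and $\Sigma_{\lambda, \Phi \backslash \{\alpha_n\}}$ is of type $A_n$ (by Proposition~\ref{proposition:root:system:lambda:phi}); Proposition~\ref{proposition:a:string:type} then yields the roots $\lambda_l = \lambda + \sum_{i=1}^l \alpha_i$ for $0 \leq l \leq n-1$, all of length $|\lambda|$. Because $A_{\alpha_n, \lambda_{n-1}} = A_{\alpha_n, \alpha_{n-1}} = -1$ (here $\alpha_n$ is the long root), Proposition~\ref{proposition:strings}~(\ref{proposition:strings:5}) gives that $\lambda_n = \lambda_{n-1} + \alpha_n$ is a root, while $\lambda_n + \alpha_n$ is not, and a length computation shows $|\lambda_n| = |\lambda|$. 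Using $A_{\alpha_{n-1}, \lambda_n} = -1$ and $A_{\alpha_i, \lambda_n} = 0$ for $i < n-1$ together with the length condition and Proposition~\ref{proposition:root:minimum:level}, the root $\lambda_n$ is of minimum level in its $\Phi \backslash \{\alpha_n\}$-string and $\Sigma_{\lambda_n, \Phi \backslash \{\alpha_n\}}$ is again of type $A_n$; Proposition~\ref{proposition:a:string:type} then produces the roots $\lambda_n + \sum_{j=1}^k \alpha_{n-j}$ for $1 \leq k \leq n-1$. Counting $\lambda$, the $\lambda_l$ with $1 \leq l \leq n$, and these last $n-1$ roots gives exactly $2n$ roots, matching the upper bound, so these are all of $I_\Phi^\lambda$; and since each is obtained from $\lambda$ by adjoining roots across bonds with Cartan integer $-1$, they all have length $|\lambda|$.

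The step I expect to be the main obstacle, or at least the one distinguishing this case from Proposition~\ref{proposition:b:string:type}, is the length bookkeeping when passing through the long simple root $\alpha_n$: one must check both that adjoining $\alpha_n$ to the short root $\lambda_{n-1}$ again produces a short root, and that the $\lambda$-string computation correctly accounts for the unique extra root $\theta_\Phi + 2\lambda$ of $\lambda$-coordinate $2$, so that the upper bound is $2n$ rather than $2n+1$.
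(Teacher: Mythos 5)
Your proposal is correct and is essentially the paper's proof: the same upper bound $|I_\Phi^\lambda| \leq (n+1)^2 - n^2 - 1 = 2n$, obtained by discarding the unique positive root with $\lambda$-coefficient $2$ (the paper simply cites it as the maximal root $2\lambda + 2\sum_{i=1}^{n-1}\alpha_i + \alpha_n$ of $\Sigma_{\lambda,\Phi}$, which you re-derive via the $\lambda$-string through the highest root of $\Sigma_\Phi$, a correct computation), followed by the same construction of $2n$ roots through two applications of Proposition~\ref{proposition:a:string:type} on either side of the long simple root $\alpha_n$.

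The one point where you diverge from the paper, and the only one needing attention, is the minimality of $\lambda_n = \lambda + \sum_{i=1}^{n}\alpha_i$ in its $\Phi \backslash \{\alpha_n\}$-string. You invoke the converse implication of Proposition~\ref{proposition:root:minimum:level}, whose length hypothesis ($|\beta| \leq |\alpha|$ for every $(\beta,\alpha) \in I_{\Phi\backslash\{\alpha_n\}}^{\lambda_n} \times (\Phi\backslash\{\alpha_n\})$) you assert without verifying; Remark~\ref{remark:minimum:level:characterization} shows, inside a $C$-type ambient system no less, that this converse genuinely fails without that hypothesis, so the check is not optional. It does hold here: every long root of $\Sigma_{\lambda,\Phi} \cong C_{n+1}$ has $\lambda$-coefficient $0$ or $2$ when written with respect to $\Pi_{\lambda,\Phi}$, whereas every element of the $\Phi\backslash\{\alpha_n\}$-string of $\lambda_n$ has $\lambda$-coefficient $1$ and is therefore short, as are all $\alpha_i$ with $i \leq n-1$. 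The paper sidesteps the issue entirely: it deduces minimality from Proposition~\ref{proposition:string:simple:rest}, observing that $\lambda_n - \alpha_i$ is not a root for any $i \in \{1,\dots,n-1\}$ (its support in $\Pi_{\lambda,\Phi}$ is disconnected). With either repair your argument is complete, including the length claims.
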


\begin{proof}
The number of roots in the $\Phi$-string of $\lambda$ is at most: the number of positive roots of the root system $\Sigma_{\lambda, \Phi} \cong C_{n+1}$, minus the number of positive roots of $\Sigma_\Phi \cong C_n$. Furthermore, note that the root of maximum level in $\Sigma_{\lambda, \Phi}$ is $ 2 \lambda + 2\sum_{i=1}^{n-1} \alpha_i + \alpha_n$  (see~\cite[p. 685]{K}), which does not belong neither to $\Sigma_\Phi$ nor to $I_\Phi^\lambda$. Thus, we obtain $|I_{\Phi}^{\lambda}| \leq  2n$ (see~\cite[p. 685]{K}). 

Since $\lambda$ is of minimum level in its $\Phi$-string, it is also of minimum level in its $\Phi \backslash \{ \alpha_n \}$-string. Note that $\Sigma_{\lambda, \Phi \backslash \{ \alpha_n \} }$ is an $A_n$ root system. Therefore, using Proposition~\ref{proposition:a:string:type}, we deduce that 
\[
\lambda_l=\lambda + \sum_{i=1}^l \alpha_i, \, \, \text{for each $l \in \{1, \dots, n-1\}$},
\]
is a root of length of $|\lambda|$ in the $\Phi$-string of $\lambda$. Moreover, $A_{\alpha_n, \lambda_{n-1}} = -1$ and from Proposition~\ref{proposition:strings}~(\ref{proposition:strings:4}) we get that $\lambda_n = \lambda_{n-1} + \alpha_n$ is a positive root of the same length of $\lambda$, as $|\lambda_{n}|^2 = |\lambda_{n-1}|^2+|\alpha_n|^2 + |\alpha_n|^2 A_{\alpha_n, \lambda_{n-1}} = |\lambda_{n-1}|^2 = |\lambda|^2$. Note that $\lambda_n$ is of minimum level in its $\Phi \backslash \{\alpha_n\}$-string. This follows from Proposition~\ref{proposition:string:simple:rest}, taking into account that $\lambda_n - \alpha_i$ is not a root for any $i \in \{1, \dots, n-1\}$. Thus, $\Sigma_{\lambda_n, \Phi \backslash \{ \alpha_n \} }$ is an $A_n$ root system, as $A_{\alpha_{n-1}, \lambda_n} = A_{\lambda_n, \alpha_{n-1}} = -1$ and $A_{\alpha_{i}, \lambda_n} =0$ for any $i \in \{1, \dots, n-2 \}$. Thus, using again Proposition~\ref{proposition:a:string:type}, we deduce that 
\[
\lambda^k = \lambda_n +  \sum_{j=1}^k \alpha_{n-j} =  \lambda + \sum_{i=1}^n \alpha_i + \sum_{j=1}^k \alpha_{n-j}, \, \, \text{for each $k \in \{ 1, \dots ,n-1 \}$},
\]
is a root of length of $|\lambda|$ in the $\Phi$-string of $\lambda$. Then $\lambda$, $\lambda_l$ with $l \in \{1, \dots, n\}$, and $\lambda^k$ with $k \in \{1, \dots, n-1\}$, they are $2n \geq |I_{\Phi}^{\lambda}|$ roots of the $\Phi$-string of~$\lambda$, so they are all of them and we have seen that they all have the same length. 
\end{proof}

\subsection{$\Sigma_\Phi$ is a $D_n$ root subsystem of $\Sigma$}\label{subsection:dn}

\begin{proposition}\label{proposition:d:pure:string:type}
Let $ (\Sigma_\Phi, \Sigma_{\lambda, \Phi}) \cong (D_n, D_{n+1})$, with $n \geq 3$ and $D_3 \cong A_3$, and let 
\begin{equation*}
\begin{tikzpicture}[scale=1.5]
\draw (0, 0) circle (0.1);
\draw (1, 0) circle (0.1);
\draw (3, 0) circle (0.1);
\draw (4, 1) circle (0.1);
\draw (4, 0) circle (0.1);
\draw (5, 0) circle (0.1);
\draw (0.1, 0.) -- (0.9, 0.);
\draw (1., -0.3) node {$\alpha _1$};
\draw (1.1, 0.) -- (1.5, 0.);
\draw (1.7, 0.) -- (1.82, 0.);
\draw (1.94, 0.) -- (2.06, 0.);
\draw (2.18, 0.) -- (2.3, 0.);
\draw (2.5, 0) -- (2.9, 0);
\draw (3., -0.3) node {$\alpha _{n-3}$};
\draw (3.1, 0.) -- (3.9, 0.);
\draw (4., -0.3) node {$\alpha _{n-2}$};
\draw (4.1, 0.) -- (4.9, 0.);
\draw (5., -0.3) node {$\alpha _n$};
\draw (4., 0.1) -- (4., 0.9);
\draw (4., 1.3) node {$\alpha _{n-1}$};
\draw (0, -0.3) node {$\lambda$};
\end{tikzpicture}
\end{equation*}
be the Dynkin diagram of $\Sigma_{\lambda, \Phi}$. Then, all the roots in $I_\Phi^\lambda$ have the same length and
\begin{equation}\label{string:dn:dn+1}
I_{\Phi}^{\lambda} =  \left\{ \lambda, \, \lambda + \sum_{\substack{i=1 \\ i \neq n-1}}^n \alpha_i  \right\} \cup \left\{\,\lambda + \sum_{i=1}^l \alpha_i,  \,  \lambda + \sum_{i=1}^{n} \alpha_i +\sum_{j=2}^{k} \alpha_{n-j} \, :  \, \substack{1 \leq l \leq n \\  2 \leq k \leq n-1}  \right\}.
\end{equation}

\end{proposition}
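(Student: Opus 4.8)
The plan is to follow the strategy of Subsection~\ref{subsection:strategy}: first bound $|I_\Phi^\lambda|$ from above, then exhibit exactly that many roots, exactly as in the proofs of Propositions~\ref{proposition:b:string:type} and~\ref{proposition:c:pure:string:type}. Since $\Sigma_\Phi \cong D_n$ and $\Sigma_{\lambda,\Phi}\cong D_{n+1}$, the numbers of positive roots are $n(n-1)$ and $n(n+1)$ respectively (see~\cite[pp.~684--685]{K}), so the difference $n(n+1)-n(n-1)=2n$ counts the positive roots of $\Sigma_{\lambda,\Phi}$ with $n_\lambda\geq 1$ in~\eqref{equation:string:count}. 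Because $\lambda$ occupies the end node of the long chain of the $D_{n+1}$ diagram, its coefficient in the highest root of $\Sigma_{\lambda,\Phi}$ equals $1$; hence no positive root has $n_\lambda\geq 2$, and I would conclude $|I_\Phi^\lambda|\leq 2n$. Since $D_{n+1}$ is simply laced, all roots of $\Sigma_{\lambda,\Phi}$ share a common length, so the length assertion of the statement will be automatic once $I_\Phi^\lambda$ is identified.

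Next I would produce a first batch of roots through an $A$-type substring. As $\lambda$ is of minimum level in its $\Phi$-string, it is of minimum level in its $(\Phi\setminus\{\alpha_n\})$-string, and $\Sigma_{\lambda,\Phi\setminus\{\alpha_n\}}$ is an $A_n$ root system with simple system $\{\lambda,\alpha_1,\dots,\alpha_{n-1}\}$ by Proposition~\ref{proposition:root:system:lambda:phi}. Applying Proposition~\ref{proposition:a:string:type} then gives that $\lambda_l=\lambda+\sum_{i=1}^{l}\alpha_i$ is a root for $0\leq l\leq n-1$. I would then attach $\alpha_n$ in the two available ways: a direct computation with Cartan integers, using that $\alpha_n$ is joined in the $D_n$ diagram only to $\alpha_{n-2}$ and that $A_{\alpha_n,\lambda}=0$, yields $A_{\alpha_n,\lambda_{n-2}}=A_{\alpha_n,\lambda_{n-1}}=A_{\alpha_n,\alpha_{n-2}}=-1$, so by Proposition~\ref{proposition:strings}~(\ref{proposition:strings:4}) both
\[
\nu=\lambda_{n-2}+\alpha_n=\lambda+\sum_{\substack{i=1\\ i\neq n-1}}^{n}\alpha_i
\quad\text{and}\quad
\lambda_n=\lambda_{n-1}+\alpha_n=\lambda+\sum_{i=1}^{n}\alpha_i
\]
are roots. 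Together with the $\lambda_l$ this accounts for $n+2$ distinct roots.

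For the remaining $n-2$ roots I would run one further $A$-type substring based at $\lambda_n$. Using Proposition~\ref{proposition:string:simple:rest} together with the fact that $\lambda_n-\alpha_i$ is not a root for any $i\in\{1,\dots,n-2\}$, I would argue that $\lambda_n$ is of minimum level in its $\{\alpha_1,\dots,\alpha_{n-2}\}$-string. Since $A_{\alpha_i,\lambda_n}=0$ for $i\leq n-3$ and $A_{\alpha_{n-2},\lambda_n}=-1$, the root $\lambda_n$ is joined only to the end node $\alpha_{n-2}$ of the $A_{n-2}$ chain $\{\alpha_1,\dots,\alpha_{n-2}\}$, so $\Sigma_{\lambda_n,\{\alpha_1,\dots,\alpha_{n-2}\}}$ is of type $A_{n-1}$ by Proposition~\ref{proposition:root:system:lambda:phi}. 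A final application of Proposition~\ref{proposition:a:string:type}, after re-indexing the chain as $\alpha_{n-2},\alpha_{n-3},\dots,\alpha_1$, shows that this string consists of $\lambda_n$ together with $\lambda_n+\sum_{j=2}^{k}\alpha_{n-j}=\lambda+\sum_{i=1}^{n}\alpha_i+\sum_{j=2}^{k}\alpha_{n-j}$ for $2\leq k\leq n-1$. Collecting the three families produces the $2n$ distinct roots listed in~\eqref{string:dn:dn+1}, and since $|I_\Phi^\lambda|\leq 2n$ they exhaust the string.

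The main obstacle I anticipate is the bookkeeping in this last step: one must check both that $\lambda_n$ is genuinely of minimum level in its $\{\alpha_1,\dots,\alpha_{n-2}\}$-string and that this substring attaches to the chain at the correct end, so that the iterated $A$-string yields precisely the nested sums $\sum_{j=2}^{k}\alpha_{n-j}$ appearing in~\eqref{string:dn:dn+1} rather than some other contiguous block. The computations $A_{\alpha_n,\lambda_{n-2}}=A_{\alpha_n,\lambda_{n-1}}=-1$ and the non-root-ness of the differences $\lambda_n-\alpha_i$ are exactly where the $D$-type branching enters, in contrast to the simpler $B$ and $C$ cases, and these are the points that must be verified carefully to avoid miscounting near the fork.
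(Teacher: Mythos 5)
Your proposal is correct and takes essentially the same route as the paper's proof: the upper bound $|I_\Phi^\lambda|\leq 2n$ from the positive-root counts of $D_{n+1}$ and $D_n$, the $A_n$-string producing $\lambda_l=\lambda+\sum_{i=1}^l\alpha_i$, the two attachments of $\alpha_n$ across the fork, and a final $A_{n-1}$-string based at $\lambda+\sum_{i=1}^n\alpha_i$ running along $\alpha_{n-2},\dots,\alpha_1$. The only cosmetic deviations are that the paper obtains $\lambda+\sum_{i\neq n-1}\alpha_i$ by a second application of Proposition~\ref{proposition:a:string:type} to the $\Phi\setminus\{\alpha_{n-1}\}$-string rather than by your direct Cartan-integer step, and it certifies the minimum-level property of $\lambda+\sum_{i=1}^n\alpha_i$ via the converse in Proposition~\ref{proposition:root:minimum:level} (available since all roots of $D_{n+1}$ have equal length) instead of your non-rootness checks through Proposition~\ref{proposition:string:simple:rest} --- the latter being exactly the device the paper itself uses in the $C_n$ case, and your flagged verifications ($A_{\alpha_i,\lambda_n}=0$ for $i\leq n-3$, $A_{\alpha_{n-2},\lambda_n}=-1$, whence $\lambda_n-\alpha_i\notin\Sigma$ in a simply-laced system) do go through.
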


\begin{proof}
The number of roots in the $\Phi$-string of $\lambda$ will be at most: the number of positive roots of the root system $\Sigma_{\lambda, \Phi} \cong D_{n+1}$, minus the number of positive roots of $\Sigma_\Phi \cong D_n$. Hence $|I_{\Phi}^{\lambda}| \leq 2n$ (see~\cite[p.~685]{K}).

Since $\lambda$ is of minimum level in its $\Phi$-string, it is also of minimum level in its $\Phi \backslash \{ \alpha_n\}$ and its $\Phi \backslash \{ \alpha_{n-1}\}$-string. Note that both $\Sigma_{\lambda, \Phi \backslash \{ \alpha_n\}}$ and $\Sigma_{\lambda, \Phi \backslash \{ \alpha_{n-1}\}}$ are $A_n$ root systems. Thus, using Proposition~\ref{proposition:a:string:type}, we deduce that
\[
\lambda_n = \lambda + \sum_{i=1}^{n-2} \alpha_i + \alpha_n \quad  \text{and} \quad  \lambda_l = \lambda + \sum_{i=1}^{l} \alpha_i, \, \, \text{for each $l$ in $\{ 1, \dots ,n-1 \}$}, 
\]
are roots in the $\Phi$-string of $\lambda$. Moreover, $A_{\alpha_n, \lambda_{n-1}}=-1$ and hence $\gamma = \lambda_{n-1} + \alpha_n$ is a root in the $\Phi$-string of $\lambda$ by virtue of Proposition~\ref{proposition:strings}~(\ref{proposition:strings:4}). Now, $\Phi \backslash \{ \alpha_{n-1}, \alpha_n \}$ is a connected subset of $\Phi$ and $\gamma$ is of minimum level in its $\Phi \backslash \{ \alpha_{n-1}, \alpha_n \}$-string, as follows from Proposition~\ref{proposition:root:minimum:level} and the fact that $A_{\alpha_{n-2}, \gamma}=-1$ and $A_{\alpha_k, \gamma} = 0$, with $1 \leq k \leq n-3$. Now, $\Sigma_{\gamma, \Phi \backslash \{ \alpha_{n-1}, \alpha_n \}}$ is a root system of type $A_{n-1}$. Hence, using again Proposition~\ref{proposition:a:string:type} we deduce that 
\[
\gamma_k = \gamma + \sum_{j=2}^{k} \alpha_{n-j}= \lambda + \sum_{i=1}^n  \alpha_i  + \sum_{j=2}^{k} \alpha_{n-j}, \, \, \text{for each $k$ in $\{ 2, \dots, n-1 \}$},
\]
is a root in the $\Phi$-string of $\lambda$. Thus, $\lambda$, $\lambda_l$ with $l \in \{ 1, \dots, n \}$, $\gamma$, and $\gamma_k$ with $k \in \{ 2, \dots, n-1 \}$, they are $2n \geq |I_\Phi^\lambda|$ roots in the $\Phi$-string of $\lambda$, so they are all. They all have the same length since in the root system $\Sigma_{\lambda, \Phi} \cong D_{n+1}$ all the roots~do.
\end{proof}

\section{Exceptional root systems}\label{section:exceptional:root:system}
As usual in this paper, let $\Phi$ be a proper connected subset of the set $\Pi$ of simple roots, and let $\lambda \in \Sigma^\Phi$ be the root of minimum level in its $\Phi$-string. Recall that under these circumstances, $\Sigma_{\lambda, \Phi}$ is a root subsystem of $\Sigma$ for which $\Pi_{\lambda, \Phi} = \{\lambda\} \cup \Phi$ is a simple system, by virtue of Proposition~\ref{proposition:root:system:lambda:phi}. This section is completely devoted to determine the $\Phi$-string of $\lambda$ explicitly, whenever $\Sigma_{\lambda, \Phi}$ gives rise to an exceptional root system.

In this line, in Subsections~\ref{subsection:a:e} and~\ref{subsection:d:e} we address the cases $(\Sigma_\Phi, \Sigma_{\lambda, \Phi}) \cong (A_n, E_{n+1})$  and $(\Sigma_\Phi, \Sigma_{\lambda, \Phi}) \cong (D_n, E_{n+1})$, respectively, with $n \in \{5,6,7\}$. Then, in Subsection~\ref{subsection:e:e}, we analyze the case $(\Sigma_\Phi, \Sigma_{\lambda, \Phi}) \cong (E_n, E_{n+1})$, with $n \in \{6,7\}$. Finally, in Subsections~\ref{subsection:b:f} and~\ref{subsection:c:f}, we focus on the cases $(\Sigma_\Phi, \Sigma_{\lambda, \Phi}) \cong (B_3, F_4)$ and $(\Sigma_\Phi, \Sigma_{\lambda, \Phi}) \cong (C_3, F_4)$, respectively. In order to do so, we will follow the notation of~\cite{K} to express the roots in $E_8$ and $F_4$ root systems. More precisely, let 
\begin{equation*}\label{string:e6:e7}
\begin{tikzpicture}[scale=1.4]
\draw (0, 0) circle (0.1);
\draw (1, 0) circle (0.1);
\draw (2, 0) circle (0.1);
\draw (3, 0) circle (0.1);
\draw (4, 0) circle (0.1);
\draw (5, 0) circle (0.1);
\draw (6, 0) circle (0.1);
\draw (4, 1) circle (0.1);
\draw (0.1, 0.) -- (0.9, 0.);
\draw (1., -0.3) node {$\alpha _7$};
\draw (1.1, 0.) -- (1.9, 0.);
\draw (2., -0.3) node {$\alpha _6$};
\draw (2.1, 0.) -- (2.9, 0.);
\draw (3., -0.3) node {$\alpha _5$};
\draw (3.1, 0.) -- (3.9, 0.);
\draw (4., -0.3) node {$\alpha _4$};
\draw (4.1, 0.) -- (4.9, 0.);
\draw (5., -0.3) node {$\alpha _3$};
\draw (5.1, 0.) -- (5.9, 0.);
\draw (6., -0.3) node {$\alpha _1$};
\draw (4., 0.1) -- (4., 0.9);
\draw (4., 1.3) node {$\alpha _2$};
\draw (0, -0.3) node {$\alpha _8$};
\end{tikzpicture}
\qquad \text{and} \qquad
\begin{tikzpicture}[scale=1.4]
\draw (0, 0) circle (0.1);
\draw (1, 0) circle (0.1);
\draw (2, 0) circle (0.1);
\draw (3, 0) circle (0.1);
\draw (0.1, 0.) -- (0.9, 0.);
\draw (1., -0.3) node {$\beta_2$};
\draw (1.1, -0.04) -- (1.9, -0.04);
\draw (1.1, 0.04) -- (1.9, 0.04);
\draw (2., -0.3) node {$\beta_3$};
\draw (2.1, 0.) -- (2.9, 0.);
\draw (3., -0.3) node {$\beta_4$};
\draw (0, -0.3) node {$\beta_1$};
\end{tikzpicture}
\end{equation*}
be the Dynkin diagrams of an $E_8$ and an $F_4$ root system, respectively. Then
\[
\left(
\begin{array}{@{}c@{}c@{}c@{}c@{}c@{}c@{}c@{}}
	&  &  &  & a_2 &  & \\
	a_8 \, & a_7 \, & a_6 \, & a_5 \, & a_4 \,& a_3 \, & a_1 \,
\end{array}
\right)
\qquad
\text{and}
\qquad
(b_1 \, b_2 \,b_3 \,b_4)
\]
denote the root $\sum_{i=1}^8 a_i \alpha_i$ and the root $\sum_{j=1}^4 b_j \beta_j$ of the root systems $E_8$ and $F_4$, respectively. Note that this notation allows us also to express roots in both $E_6$ or $E_7$ root systems, just by taking $a_8 = a_7 = 0$ or $a_8 = 0$, respectively.

As explained in Subsection~\ref{subsection:strategy}, the approach when $\Sigma_{\lambda, \Phi}$ is an exceptional root system is slightly different from the case when it is a classical root system. We will still start by calculating the number $m$ of roots in the $\Phi$-string of $\lambda$. However, after that we will just give a list of $m$ roots in $\Sigma_{\lambda, \Phi}^{+}$ which are at same time in the $\Phi$-string of $\lambda$ and in the corresponding list of positive roots of $E_6$ (\cite[p.~687]{K}), $E_7$ (\cite[p.~688]{K}), $E_8$ (\cite[p.~689--690]{K}) or $F_4$ (\cite[p.~691]{K}) root systems.

In Subsections~\ref{subsection:a:e}, \ref{subsection:d:e} and~\ref{subsection:e:e}, the root system $\Sigma_{\lambda, \Phi}$ will be always of type $E$. This implies that all the roots of the $\Phi$-string of $\lambda$ will have the same length (as $\lambda$). Therefore, we omit this argument in each proof for avoiding repetition.

\subsection{$\Sigma_\Phi$ is an $A_n$ root subsystem of $\Sigma$}\label{subsection:a:e}

Recall that $\Phi = \{ \alpha_1, \dots, \alpha_n \}$ is a proper connected subset of $\Pi$, $\lambda \in \Sigma^{\Phi}$ is the root of minimum level in its $\Phi$-string, and $\Sigma_{\lambda, \Phi}$ denotes the root subsystem of $\Sigma$ for which $\Pi_{\lambda, \Phi} = \{ \lambda \} \cup \Phi$ is a simple system.
\begin{proposition}\label{proposition:a:e:string:type}
Let $ (\Sigma_\Phi, \Sigma_{\lambda, \Phi}) \cong (A_n, E_{n+1})$, with $n$ in $\{5, 6, 7\}$, and let 
\begin{equation*}\label{string:a:e}
\begin{tikzpicture}[scale=1.5]
\draw (1, 0) circle (0.1);
\draw (3, 0) circle (0.1);
\draw (4, 0) circle (0.1);
\draw (5, 0) circle (0.1);
\draw (4, 1) circle (0.1);
\draw (6, 0) circle (0.1);
\draw (4., 0.1) -- (4., 0.9);
\draw (4., 1.3) node {$\lambda$};
\draw (3.1, 0.) -- (3.9, 0.);
\draw (4., -0.3) node {$\alpha _3$};
\draw (4.1, 0.) -- (4.9, 0.);
\draw (5., -0.3) node {$\alpha _2$};
\draw (5.1, 0.) -- (5.9, 0.);
\draw (6., -0.3) node {$\alpha _1$};
\draw (1.1, 0.) -- (1.5, 0.);
\draw (1.7, 0.) -- (1.82, 0.);
\draw (1.94, 0.) -- (2.06, 0.);
\draw (2.18, 0.) -- (2.3, 0.);
\draw (2.5, 0) -- (2.9, 0);
\draw (3., -0.3) node {$\alpha _4$};
\draw (1, -0.3) node {$\alpha _n$};
\end{tikzpicture}
\end{equation*}
be the Dynkin diagram of $\Sigma_{\lambda, \Phi}$. Then, the $\Phi$-string of $\lambda$ consists of the first 20 roots below if $n = 5$, the first 35 below if $n=6$, and all of them otherwise, and they all have the same length:
\begin{align*}
&\left(
\begin{array}{@{}c@{}c@{}c@{}c@{}c@{}c@{}c@{}}
	&  &  &  & 1 &  & \\
	0 & 0 & 0 & 0 & 0 & 0 & 0
\end{array}
\right),
\left(
\begin{array}{@{}c@{}c@{}c@{}c@{}c@{}c@{}c@{}}
	&  &  &  & 1 &  & \\
	0 & 0 & 0 & 0 & 1 & 0 & 0
\end{array}
\right),
\left(
\begin{array}{@{}c@{}c@{}c@{}c@{}c@{}c@{}c@{}}
	&  &  &  & 1 &  & \\
	0 & 0 & 0 & 1 & 1 & 0 & 0
\end{array}
\right),
\left(
\begin{array}{@{}c@{}c@{}c@{}c@{}c@{}c@{}c@{}}
	&  &  &  & 1 &  & \\
	0 & 0 & 0 & 0 & 1 & 1 & 0
\end{array}
\right),
\left(
\begin{array}{@{}c@{}c@{}c@{}c@{}c@{}c@{}c@{}}
	&  &  &  & 1 &  & \\
	0 & 0 & 1 & 1 & 1 & 0 & 0
\end{array}
\right),
\left(
\begin{array}{@{}c@{}c@{}c@{}c@{}c@{}c@{}c@{}}
	&  &  &  & 1 &  & \\
	0 & 0 & 0 & 1 & 1 & 1 & 0
\end{array}
\right),
\left(
\begin{array}{@{}c@{}c@{}c@{}c@{}c@{}c@{}c@{}}
	&  &  &  & 1 &  & \\
	0 & 0 & 0 & 0 & 1 & 1 & 1
\end{array}
\right), 
\\&
\left(
\begin{array}{@{}c@{}c@{}c@{}c@{}c@{}c@{}c@{}}
	&  &  &  & 1 &  & \\
	0 & 0 & 1 & 1 & 1 & 1 & 0
\end{array}
\right),
\left(
\begin{array}{@{}c@{}c@{}c@{}c@{}c@{}c@{}c@{}}
	&  &  &  & 1 &  & \\
	0 & 0 & 0 & 1 & 2 & 1 & 0
\end{array}
\right),
\left(
\begin{array}{@{}c@{}c@{}c@{}c@{}c@{}c@{}c@{}}
	&  &  &  & 1 &  & \\
	0 & 0 & 0 & 1 & 1 & 1 & 1
\end{array}
\right),
\left(
\begin{array}{@{}c@{}c@{}c@{}c@{}c@{}c@{}c@{}}
	&  &  &  & 1 &  & \\
	0 & 0 & 1 & 1 & 2 & 1 & 0
\end{array}
\right),
\left(
\begin{array}{@{}c@{}c@{}c@{}c@{}c@{}c@{}c@{}}
	&  &  &  & 1 &  & \\
	0 & 0 & 1 & 1 & 1 & 1 & 1
\end{array}
\right),
\left(
\begin{array}{@{}c@{}c@{}c@{}c@{}c@{}c@{}c@{}}
	&  &  &  & 1 &  & \\
	0 & 0 & 0 & 1 & 2 & 1 & 1
\end{array}
\right),
\left(
\begin{array}{@{}c@{}c@{}c@{}c@{}c@{}c@{}c@{}}
	&  &  &  & 1 &  & \\
	0 & 0 & 1 & 2 & 2 & 1 & 0
\end{array}
\right), 
\\&
\left(
\begin{array}{@{}c@{}c@{}c@{}c@{}c@{}c@{}c@{}}
	&  &  &  & 1 &  & \\
	0 & 0 & 1 & 1 & 2 & 1 & 1
\end{array}
\right),
\left(
\begin{array}{@{}c@{}c@{}c@{}c@{}c@{}c@{}c@{}}
	&  &  &  & 1 &  & \\
	0 & 0 & 0 & 1 & 2 & 2 & 1
\end{array}
\right),
\left(
\begin{array}{@{}c@{}c@{}c@{}c@{}c@{}c@{}c@{}}
	&  &  &  & 1 &  & \\
	0 & 0 & 1 & 2 & 2 & 1 & 1
\end{array}
\right),
\left(
\begin{array}{@{}c@{}c@{}c@{}c@{}c@{}c@{}c@{}}
	&  &  &  & 1 &  & \\
	0 & 0 & 1 & 1 & 2 & 2 & 1
\end{array}
\right),
\left(
\begin{array}{@{}c@{}c@{}c@{}c@{}c@{}c@{}c@{}}
	&  &  &  & 1 &  & \\
	0 & 0 & 1 & 2 & 2 & 2 & 1
\end{array}
\right),
\left(
\begin{array}{@{}c@{}c@{}c@{}c@{}c@{}c@{}c@{}}
	&  &  &  & 1 &  & \\
	0 & 0 & 1 & 2 & 3 & 2 & 1
\end{array}
\right),
\left(
\begin{array}{@{}c@{}c@{}c@{}c@{}c@{}c@{}c@{}}
	&  &  &  & 1 &  & \\
	0 & 1 & 1 & 1 & 1 & 0 & 0
\end{array}
\right),
\\&
\left(
\begin{array}{@{}c@{}c@{}c@{}c@{}c@{}c@{}c@{}}
	&  &  &  & 1 &  & \\
	0 & 1 & 1 & 1 & 1 & 1 & 0
\end{array}
\right),
\left(
\begin{array}{@{}c@{}c@{}c@{}c@{}c@{}c@{}c@{}}
	&  &  &  & 1 &  & \\
	0 & 1 & 1 & 1 & 2 & 1 & 0
\end{array}
\right),
\left(
\begin{array}{@{}c@{}c@{}c@{}c@{}c@{}c@{}c@{}}
	&  &  &  & 1 &  & \\
	0 & 1 & 1 & 2 & 2 & 1 & 0
\end{array}
\right),
\left(
\begin{array}{@{}c@{}c@{}c@{}c@{}c@{}c@{}c@{}}
	&  &  &  & 1 &  & \\
	0 & 1 & 2 & 2 & 2 & 1 & 0
\end{array}
\right),
\left(
\begin{array}{@{}c@{}c@{}c@{}c@{}c@{}c@{}c@{}}
	&  &  &  & 1 &  & \\
	0 & 1 & 1 & 1 & 1 & 1 & 1
\end{array}
\right),
\left(
\begin{array}{@{}c@{}c@{}c@{}c@{}c@{}c@{}c@{}}
	&  &  &  & 1 &  & \\
	0 & 1 & 1 & 1 & 2 & 1 & 1
\end{array}
\right),
\left(
\begin{array}{@{}c@{}c@{}c@{}c@{}c@{}c@{}c@{}}
	&  &  &  & 1 &  & \\
	0 & 1 & 1 & 2 & 2 & 1 & 1
\end{array}
\right), 
\end{align*}
\begin{align*}
&
\left(
\begin{array}{@{}c@{}c@{}c@{}c@{}c@{}c@{}c@{}}
	&  &  &  & 1 &  & \\
	0 & 1 & 2 & 2 & 2 & 1 & 1
\end{array}
\right),
\left(
\begin{array}{@{}c@{}c@{}c@{}c@{}c@{}c@{}c@{}}
	&  &  &  & 1 &  & \\
	0 & 1 & 1 & 1 & 2 & 2 & 1
\end{array}
\right),
\left(
\begin{array}{@{}c@{}c@{}c@{}c@{}c@{}c@{}c@{}}
	&  &  &  & 1 &  & \\
	0 & 1 & 1 & 2 & 2 & 2 & 1
\end{array}
\right),
\left(
\begin{array}{@{}c@{}c@{}c@{}c@{}c@{}c@{}c@{}}
	&  &  &  & 1 &  & \\
	0 & 1 & 2 & 2 & 2 & 2 & 1
\end{array}
\right),
\left(
\begin{array}{@{}c@{}c@{}c@{}c@{}c@{}c@{}c@{}}
	&  &  &  & 1 &  & \\
	0 & 1 & 1 & 2 & 3 & 2 & 1
\end{array}
\right),
\left(
\begin{array}{@{}c@{}c@{}c@{}c@{}c@{}c@{}c@{}}
	&  &  &  & 1 &  & \\
	0 & 1 & 2 & 2 & 3 & 2 & 1
\end{array}
\right),
\left(
\begin{array}{@{}c@{}c@{}c@{}c@{}c@{}c@{}c@{}}
	&  &  &  & 1 &  & \\
	0 & 1 & 2 & 3 & 3 & 2 & 1
\end{array}
\right), \\&
\left(
\begin{array}{@{}c@{}c@{}c@{}c@{}c@{}c@{}c@{}}
	&  &  &  & 1 &  & \\
	1 & 1 & 1 & 1 & 1 & 0 & 0
\end{array}
\right),
\left(
\begin{array}{@{}c@{}c@{}c@{}c@{}c@{}c@{}c@{}}
	&  &  &  & 1 &  & \\
	1 & 1 & 1 & 1 & 1 & 1 & 0
\end{array}
\right),
\left(
\begin{array}{@{}c@{}c@{}c@{}c@{}c@{}c@{}c@{}}
	&  &  &  & 1 &  & \\
	1 & 1 & 1 & 1 & 2 & 1 & 0
\end{array}
\right),
\left(
\begin{array}{@{}c@{}c@{}c@{}c@{}c@{}c@{}c@{}}
	&  &  &  & 1 &  & \\
	1 & 1 & 1 & 1 & 1 & 1 & 1
\end{array}
\right),
\left(
\begin{array}{@{}c@{}c@{}c@{}c@{}c@{}c@{}c@{}}
	&  &  &  & 1 &  & \\
	1 & 1 & 1 & 2 & 2 & 1 & 0
\end{array}
\right),
\left(
\begin{array}{@{}c@{}c@{}c@{}c@{}c@{}c@{}c@{}}
	&  &  &  & 1 &  & \\
	1 & 1 & 1 & 1 & 2 & 1 & 1
\end{array}
\right),
\left(
\begin{array}{@{}c@{}c@{}c@{}c@{}c@{}c@{}c@{}}
	&  &  &  & 1 &  & \\
	1 & 1 & 2 & 2 & 2 & 1 & 0
\end{array}
\right), \\&
\left(
\begin{array}{@{}c@{}c@{}c@{}c@{}c@{}c@{}c@{}}
	&  &  &  & 1 &  & \\
	1 & 1 & 1 & 2 & 2 & 1 & 1
\end{array}
\right),
\left(
\begin{array}{@{}c@{}c@{}c@{}c@{}c@{}c@{}c@{}}
	&  &  &  & 1 &  & \\
	1 & 1 & 1 & 1 & 2 & 2 & 1
\end{array}
\right),
\left(
\begin{array}{@{}c@{}c@{}c@{}c@{}c@{}c@{}c@{}}
	&  &  &  & 1 &  & \\
	1 & 2 & 2 & 2 & 2 & 1 & 0
\end{array}
\right),
\left(
\begin{array}{@{}c@{}c@{}c@{}c@{}c@{}c@{}c@{}}
	&  &  &  & 1 &  & \\
	1 & 1 & 2 & 2 & 2 & 1 & 1
\end{array}
\right),
\left(
\begin{array}{@{}c@{}c@{}c@{}c@{}c@{}c@{}c@{}}
	&  &  &  & 1 &  & \\
	1 & 1 & 1 & 2 & 2 & 2 & 1
\end{array}
\right),
\left(
\begin{array}{@{}c@{}c@{}c@{}c@{}c@{}c@{}c@{}}
	&  &  &  & 1 &  & \\
	1 & 2 & 2 & 2 & 2 & 1 & 1
\end{array}
\right),
\left(
\begin{array}{@{}c@{}c@{}c@{}c@{}c@{}c@{}c@{}}
	&  &  &  & 1 &  & \\
	1 & 1 & 2 & 2 & 2 & 2 & 1
\end{array}
\right), \\&
\left(
\begin{array}{@{}c@{}c@{}c@{}c@{}c@{}c@{}c@{}}
	&  &  &  & 1 &  & \\
	1 & 1 & 1 & 2 & 3 & 2 & 1
\end{array}
\right),
\left(
\begin{array}{@{}c@{}c@{}c@{}c@{}c@{}c@{}c@{}}
	&  &  &  & 1 &  & \\
	1 & 2 & 2 & 2 & 2 & 2 & 1
\end{array}
\right),
\left(
\begin{array}{@{}c@{}c@{}c@{}c@{}c@{}c@{}c@{}}
	&  &  &  & 1 &  & \\
	1 & 1 & 2 & 2 & 3 & 2 & 1
\end{array}
\right),
\left(
\begin{array}{@{}c@{}c@{}c@{}c@{}c@{}c@{}c@{}}
	&  &  &  & 1 &  & \\
	1 & 2 & 2 & 2 & 3 & 2 & 1
\end{array}
\right),
\left(
\begin{array}{@{}c@{}c@{}c@{}c@{}c@{}c@{}c@{}}
	&  &  &  & 1 &  & \\
	1 & 1 & 2 & 3 & 3 & 2 & 1
\end{array}
\right),
\left(
\begin{array}{@{}c@{}c@{}c@{}c@{}c@{}c@{}c@{}}
	&  &  &  & 1 &  & \\
	1 & 2 & 2 & 3 & 3 & 2 & 1
\end{array}
\right),
\left(
\begin{array}{@{}c@{}c@{}c@{}c@{}c@{}c@{}c@{}}
	&  &  &  & 1 &  & \\
	1 & 2 & 3 & 3 & 3 & 2 & 1
\end{array}
\right).
\end{align*}
\end{proposition}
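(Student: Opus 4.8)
The plan is to run the strategy of Subsection~\ref{subsection:strategy} inside $\Sigma_{\lambda,\Phi}\cong E_{n+1}$. Under the simple system $\Pi_{\lambda,\Phi}=\{\lambda\}\cup\Phi$ the root $\lambda$ occupies the trivalent node of the $E_{n+1}$ diagram, so, by the strategy, determining $I_\Phi^\lambda$ is the same as listing exactly those positive roots of $E_{n+1}$ whose coefficient on $\lambda$ equals $1$. Because $E_{n+1}$ is simply laced, every such root has length $|\lambda|$, which already settles the length assertion (as recorded in the paragraph preceding this subsection), so only the enumeration remains.

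The first step is to fix a dictionary between the labelling of the statement and Knapp's labelling of $E_8$ used in the array notation above, in which the coefficient of the trivalent node is written on top. Matching the two diagrams at the trivalent node, $\lambda$ is Knapp's $\alpha_2$ (hence membership in $I_\Phi^\lambda$ is exactly the condition that the top entry equals $1$), while the simple roots that are missing when $n=5,6$ are precisely those whose coefficients occupy the first one or two slots of the bottom row. Consequently the three cases are nested: the subsystem spanned by $\{\lambda,\alpha_1,\dots,\alpha_n\}$ is cut out inside $E_8$ by the vanishing of these outer coefficients, so the sublist with the first two bottom entries equal to $0$ is the $n=5$ string and the sublist with the first bottom entry equal to $0$ is the $n=6$ string.

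Next I would compute the cardinality. By Subsection~\ref{subsection:strategy}, $|I_\Phi^\lambda|$ is the number of positive roots of $E_{n+1}$, minus the $\binom{n+1}{2}=15,21,28$ positive roots of $\Sigma_\Phi\cong A_n$ (those with top entry $0$), minus the positive roots of $E_{n+1}$ whose top entry is at least $2$. Since $E_6$, $E_7$, $E_8$ have $36$, $63$, $120$ positive roots, and Knapp's tables~\cite{K} exhibit $1$, $7$, $36$ positive roots with top entry $\ge 2$ respectively, one obtains $|I_\Phi^\lambda|=20,35,56=\binom{n+1}{3}$. The value $\binom{n+1}{3}$ is conceptually transparent: grading $E_{n+1}$ by the coefficient of the trivalent node realizes the top-entry-$1$ part as the third exterior power $\Lambda^3$ of the standard representation of the $A_n$ Levi factor; I would mention this only as a check, since it is not needed for the argument.

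Finally I would exhibit the list. For each of the $\binom{n+1}{3}$ displayed arrays I would note that its top entry is $1$ and that it occurs in Knapp's enumeration of the positive roots of $E_6$, $E_7$ or $E_8$~\cite{K}; as these arrays are pairwise distinct and exactly $|I_\Phi^\lambda|$ in number, they are the whole $\Phi$-string, and the nesting of the second paragraph identifies the first $20$ and first $35$ with the cases $n=5$ and $n=6$. The only genuine obstacle is bookkeeping: pinning down the labelling dictionary so that the trivalent node is consistently the top entry, and then carrying out the lengthy but entirely mechanical cross-check of the $56$ vectors against Knapp's list. No further conceptual input is required beyond the count above.
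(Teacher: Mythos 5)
Your proposal is correct and essentially coincides with the paper's proof: you compute $|I_\Phi^\lambda|=20,35,56$ as the number of positive roots of $\Sigma_{\lambda,\Phi}\cong E_{n+1}$ minus those of $\Sigma_\Phi\cong A_n$ minus those whose $\lambda$-coefficient is at least two, and then verify the displayed arrays against Knapp's tables of positive roots, the length claim being automatic since $E_{n+1}$ is simply laced (exactly as the paper notes before Subsection~\ref{subsection:a:e}). One terminological slip with no effect on the argument: $\lambda$ corresponds to Knapp's $\alpha_2$, the degree-one node \emph{adjacent} to the trivalent node $\alpha_4$ (it is the coefficient of $\alpha_2$ that is written on top in the array notation), and your $\Lambda^3$ interpretation of the count $\binom{n+1}{3}$ is a correct bonus consistency check not present in the paper.
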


\begin{proof}
The number of roots in the $\Phi$-string of $\lambda$ will be: the number of positive roots of the root system $\Sigma_{\lambda, \Phi} \cong E_{n+1}$, minus the number of positive roots of $\Sigma_\Phi \cong A_n$, and minus the number of (positive) roots of $\Sigma_{\lambda, \Phi}$ whose coefficient corresponding to $\lambda$ when expressed with respect to  $\Pi_{\lambda, \Phi}$ is greater than one, where $n \in \{5, 6, 7\}$. According to~\cite[p.~687--690]{K}, we have that $|I_\Phi^\lambda|$ is 20 if $n =5$, 35 if $n = 6$ and 56 if $n = 7$.

Now, one can check that the first 20 roots in the statement of this result are positive roots in an $E_6$ root system, the first 35 are positive roots in an $E_7$ root system, and all of them, 56, are positive roots of an $E_8$ root system (see~\cite[p.~687--690]{K}). They all belong to $I_\Phi^\lambda$ as their coefficient corresponding to $\lambda$ with respect to $\Pi_{\lambda, \Phi}$ is one.
\end{proof}

\subsection{$\Sigma_\Phi$ is a $D_n$ root subsystem of $\Sigma$}\label{subsection:d:e}

\begin{proposition}\label{proposition:d:e:string:type}
Let $ (\Sigma_\Phi, \Sigma_{\lambda, \Phi}) \cong (D_n, E_{n+1})$, with $n$ in $\{5, 6, 7\}$, and let 
\begin{equation*}\label{string:d:e}
\begin{tikzpicture}[scale=1.5]
\draw (1, 0) circle (0.1);
\draw (3, 0) circle (0.1);
\draw (4, 0) circle (0.1);
\draw (5, 0) circle (0.1);
\draw (4, 1) circle (0.1);
\draw (6, 0) circle (0.1);
\draw (4., 0.1) -- (4., 0.9);
\draw (4., 1.3) node {$\alpha _2$};
\draw (3.1, 0.) -- (3.9, 0.);
\draw (4., -0.3) node {$\alpha _3$};
\draw (4.1, 0.) -- (4.9, 0.);
\draw (5., -0.3) node {$\alpha _1$};
\draw (5.1, 0.) -- (5.9, 0.);
\draw (6., -0.3) node {$\lambda$};
\draw (1.1, 0.) -- (1.5, 0.);
\draw (1.7, 0.) -- (1.82, 0.);
\draw (1.94, 0.) -- (2.06, 0.);
\draw (2.18, 0.) -- (2.3, 0.);
\draw (2.5, 0) -- (2.9, 0);
\draw (3., -0.3) node {$\alpha _4$};
\draw (1, -0.3) node {$\alpha _n$};
\end{tikzpicture}
\end{equation*}
be the Dynkin diagram of $\Sigma_{\lambda, \Phi}$. Then, the $\Phi$-string of $\lambda$ consists of the first 16 roots below if $n = 5$, the first 32 roots below if $n=6$, and all of them otherwise, and they all have the same length:
\begin{align*}
	&\left(
	\begin{array}{@{}c@{}c@{}c@{}c@{}c@{}c@{}c@{}}
		&  &  &  & 0 &  & \\
		0 & 0 & 0 & 0 & 0 & 0 & 1
	\end{array}
	\right),
	\left(
	\begin{array}{@{}c@{}c@{}c@{}c@{}c@{}c@{}c@{}}
		&  &  &  & 0 &  & \\
		0 & 0 & 0 & 0 & 0 & 1 & 1
	\end{array}
	\right),
	\left(
	\begin{array}{@{}c@{}c@{}c@{}c@{}c@{}c@{}c@{}}
		&  &  &  & 0 &  & \\
		0 & 0 & 0 & 0 & 1 & 1 & 1
	\end{array}
	\right),
	\left(
	\begin{array}{@{}c@{}c@{}c@{}c@{}c@{}c@{}c@{}}
		&  &  &  & 0 &  & \\
		0 & 0 & 0 & 1 & 1 & 1 & 1
	\end{array}
	\right),
	\left(
	\begin{array}{@{}c@{}c@{}c@{}c@{}c@{}c@{}c@{}}
		&  &  &  & 1 &  & \\
		0 & 0 & 0 & 0 & 1 & 1 & 1
	\end{array}
	\right),
	\left(
	\begin{array}{@{}c@{}c@{}c@{}c@{}c@{}c@{}c@{}}
		&  &  &  & 0 &  & \\
		0 & 0 & 1 & 1 & 1 & 1 & 1
	\end{array}
	\right),
	\left(
	\begin{array}{@{}c@{}c@{}c@{}c@{}c@{}c@{}c@{}}
		&  &  &  & 1 &  & \\
		0 & 0 & 0 & 1 & 1 & 1 & 1
	\end{array}
	\right), \\&
	\left(
	\begin{array}{@{}c@{}c@{}c@{}c@{}c@{}c@{}c@{}}
		&  &  &  & 1 &  & \\
		0 & 0 & 1 & 1 & 1 & 1 & 1
	\end{array}
	\right),
	\left(
	\begin{array}{@{}c@{}c@{}c@{}c@{}c@{}c@{}c@{}}
		&  &  &  & 1 &  & \\
		0 & 0 & 0 & 1 & 2 & 1 & 1
	\end{array}
	\right),
	\left(
	\begin{array}{@{}c@{}c@{}c@{}c@{}c@{}c@{}c@{}}
		&  &  &  & 1 &  & \\
		0 & 0 & 1 & 1 & 2 & 1 & 1
	\end{array}
	\right),
	\left(
	\begin{array}{@{}c@{}c@{}c@{}c@{}c@{}c@{}c@{}}
		&  &  &  & 1 &  & \\
		0 & 0 & 0 & 1 & 2 & 2 & 1
	\end{array}
	\right),
	\left(
	\begin{array}{@{}c@{}c@{}c@{}c@{}c@{}c@{}c@{}}
		&  &  &  & 1 &  & \\
		0 & 0 & 1 & 2 & 2 & 1 & 1
	\end{array}
	\right),
	\left(
	\begin{array}{@{}c@{}c@{}c@{}c@{}c@{}c@{}c@{}}
		&  &  &  & 1 &  & \\
		0 & 0 & 1 & 1 & 2 & 2 & 1
	\end{array}
	\right),
	\left(
	\begin{array}{@{}c@{}c@{}c@{}c@{}c@{}c@{}c@{}}
		&  &  &  & 1 &  & \\
		0 & 0 & 1 & 2 & 2 & 2 & 1
	\end{array}
	\right), \\&
	\left(
	\begin{array}{@{}c@{}c@{}c@{}c@{}c@{}c@{}c@{}}
		&  &  &  & 1 &  & \\
		0 & 0 & 1 & 2 & 3 & 2 & 1
	\end{array}
	\right),
	\left(
	\begin{array}{@{}c@{}c@{}c@{}c@{}c@{}c@{}c@{}}
		&  &  &  & 2 &  & \\
		0 & 0 & 1 & 2 & 3 & 2 & 1
	\end{array}
	\right),
	\left(
	\begin{array}{@{}c@{}c@{}c@{}c@{}c@{}c@{}c@{}}
		&  &  &  & 0 &  & \\
		0 & 1 & 1 & 1 & 1 & 1 & 1
	\end{array}
	\right),
	\left(
	\begin{array}{@{}c@{}c@{}c@{}c@{}c@{}c@{}c@{}}
		&  &  &  & 1 &  & \\
		0 & 1 & 1 & 1 & 1 & 1 & 1
	\end{array}
	\right),
	\left(
	\begin{array}{@{}c@{}c@{}c@{}c@{}c@{}c@{}c@{}}
		&  &  &  & 1 &  & \\
		0 & 1 & 1 & 1 & 2 & 1 & 1
	\end{array}
	\right),
	\left(
	\begin{array}{@{}c@{}c@{}c@{}c@{}c@{}c@{}c@{}}
		&  &  &  & 1 &  & \\
		0 & 1 & 1 & 2 & 2 & 1 & 1
	\end{array}
	\right),
	\left(
	\begin{array}{@{}c@{}c@{}c@{}c@{}c@{}c@{}c@{}}
		&  &  &  & 1 &  & \\
		0 & 1 & 1 & 1 & 2 & 2 & 1
	\end{array}
	\right),
\\&
\left(
	\begin{array}{@{}c@{}c@{}c@{}c@{}c@{}c@{}c@{}}
		&  &  &  & 1 &  & \\
		0 & 1 & 2 & 2 & 2 & 1 & 1
	\end{array}
	\right),
	\left(
	\begin{array}{@{}c@{}c@{}c@{}c@{}c@{}c@{}c@{}}
		&  &  &  & 1 &  & \\
		0 & 1 & 1 & 2 & 2 & 2 & 1
	\end{array}
	\right),
	\left(
	\begin{array}{@{}c@{}c@{}c@{}c@{}c@{}c@{}c@{}}
		&  &  &  & 1 &  & \\
		0 & 1 & 2 & 2 & 2 & 2 & 1
	\end{array}
	\right),
	\left(
	\begin{array}{@{}c@{}c@{}c@{}c@{}c@{}c@{}c@{}}
		&  &  &  & 1 &  & \\
		0 & 1 & 1 & 2 & 3 & 2 & 1
	\end{array}
	\right),
	\left(
	\begin{array}{@{}c@{}c@{}c@{}c@{}c@{}c@{}c@{}}
		&  &  &  & 1 &  & \\
		0 & 1 & 2 & 2 & 3 & 2 & 1
	\end{array}
	\right),
	\left(
	\begin{array}{@{}c@{}c@{}c@{}c@{}c@{}c@{}c@{}}
		&  &  &  & 2 &  & \\
		0 & 1 & 1 & 2 & 3 & 2 & 1
	\end{array}
	\right),
	\left(
	\begin{array}{@{}c@{}c@{}c@{}c@{}c@{}c@{}c@{}}
		&  &  &  & 1 &  & \\
		0 & 1 & 2 & 3 & 3 & 2 & 1
	\end{array}
	\right),
\end{align*}
\begin{align*}
&
	\left(
	\begin{array}{@{}c@{}c@{}c@{}c@{}c@{}c@{}c@{}}
		&  &  &  & 2 &  & \\
		0 & 1 & 2 & 2 & 3 & 2 & 1
	\end{array}
	\right), 
	\left(
	\begin{array}{@{}c@{}c@{}c@{}c@{}c@{}c@{}c@{}}
		&  &  &  & 2 &  & \\
		0 & 1 & 2 & 3 & 3 & 2 & 1
	\end{array}
	\right),
	\left(
	\begin{array}{@{}c@{}c@{}c@{}c@{}c@{}c@{}c@{}}
		&  &  &  & 2 &  & \\
		0 & 1 & 2 & 3 & 4 & 2 & 1
	\end{array}
	\right),
	\left(
	\begin{array}{@{}c@{}c@{}c@{}c@{}c@{}c@{}c@{}}
		&  &  &  & 2 &  & \\
		0 & 1 & 2 & 3 & 4 & 3 & 1
	\end{array}
	\right),
	\left(
	\begin{array}{@{}c@{}c@{}c@{}c@{}c@{}c@{}c@{}}
		&  &  &  & 0 &  & \\
		1 & 1 & 1 & 1 & 1 & 1 & 1
	\end{array}
	\right),
	\left(
	\begin{array}{@{}c@{}c@{}c@{}c@{}c@{}c@{}c@{}}
		&  &  &  & 1 &  & \\
		1 & 1 & 1 & 1 & 1 & 1 & 1
	\end{array}
	\right),
	\left(
	\begin{array}{@{}c@{}c@{}c@{}c@{}c@{}c@{}c@{}}
		&  &  &  & 1 &  & \\
		1 & 1 & 1 & 1 & 2 & 1 & 1
	\end{array}
	\right),\\&
	\left(
	\begin{array}{@{}c@{}c@{}c@{}c@{}c@{}c@{}c@{}}
		&  &  &  & 1 &  & \\
		1 & 1 & 1 & 2 & 2 & 1 & 1
	\end{array}
	\right),
	\left(
	\begin{array}{@{}c@{}c@{}c@{}c@{}c@{}c@{}c@{}}
		&  &  &  & 1 &  & \\
		1 & 1 & 1 & 1 & 2 & 2 & 1
	\end{array}
	\right),
	\left(
	\begin{array}{@{}c@{}c@{}c@{}c@{}c@{}c@{}c@{}}
		&  &  &  & 1 &  & \\
		1 & 1 & 2 & 2 & 2 & 1 & 1
	\end{array}
	\right),
	\left(
	\begin{array}{@{}c@{}c@{}c@{}c@{}c@{}c@{}c@{}}
		&  &  &  & 1 &  & \\
		1 & 1 & 1 & 2 & 2 & 2 & 1
	\end{array}
	\right),
	\left(
	\begin{array}{@{}c@{}c@{}c@{}c@{}c@{}c@{}c@{}}
		&  &  &  & 1 &  & \\
		1 & 2 & 2 & 2 & 2 & 1 & 1
	\end{array}
	\right),
	\left(
	\begin{array}{@{}c@{}c@{}c@{}c@{}c@{}c@{}c@{}}
		&  &  &  & 1 &  & \\
		1 & 1 & 2 & 2 & 2 & 2 & 1
	\end{array}
	\right),
	\left(
	\begin{array}{@{}c@{}c@{}c@{}c@{}c@{}c@{}c@{}}
		&  &  &  & 1 &  & \\
		1 & 1 & 1 & 2 & 3 & 2 & 1
	\end{array}
	\right), \\&
	\left(
	\begin{array}{@{}c@{}c@{}c@{}c@{}c@{}c@{}c@{}}
		&  &  &  & 1 &  & \\
		1 & 2 & 2 & 2 & 2 & 2 & 1
	\end{array}
	\right),
	\left(
	\begin{array}{@{}c@{}c@{}c@{}c@{}c@{}c@{}c@{}}
		&  &  &  & 1 &  & \\
		1 & 1 & 2 & 2 & 3 & 2 & 1
	\end{array}
	\right),
	\left(
	\begin{array}{@{}c@{}c@{}c@{}c@{}c@{}c@{}c@{}}
		&  &  &  & 2 &  & \\
		1 & 1 & 1 & 2 & 3 & 2 & 1
	\end{array}
	\right),
	\left(
	\begin{array}{@{}c@{}c@{}c@{}c@{}c@{}c@{}c@{}}
		&  &  &  & 1 &  & \\
		1 & 2 & 2 & 2 & 3 & 2 & 1
	\end{array}
	\right),
	\left(
	\begin{array}{@{}c@{}c@{}c@{}c@{}c@{}c@{}c@{}}
		&  &  &  & 1 &  & \\
		1 & 1 & 2 & 3 & 3 & 2 & 1
	\end{array}
	\right),
	\left(
	\begin{array}{@{}c@{}c@{}c@{}c@{}c@{}c@{}c@{}}
		&  &  &  & 2 &  & \\
		1 & 1 & 2 & 2 & 3 & 2 & 1
	\end{array}
	\right),
	\left(
	\begin{array}{@{}c@{}c@{}c@{}c@{}c@{}c@{}c@{}}
		&  &  &  & 1 &  & \\
		1 & 2 & 2 & 3 & 3 & 2 & 1
	\end{array}
	\right), \\&
	\left(
	\begin{array}{@{}c@{}c@{}c@{}c@{}c@{}c@{}c@{}}
		&  &  &  & 2 &  & \\
		1 & 2 & 2 & 2 & 3 & 2 & 1
	\end{array}
	\right),
	\left(
	\begin{array}{@{}c@{}c@{}c@{}c@{}c@{}c@{}c@{}}
		&  &  &  & 2 &  & \\
		1 & 1 & 2 & 3 & 3 & 2 & 1
	\end{array}
	\right),
	\left(
	\begin{array}{@{}c@{}c@{}c@{}c@{}c@{}c@{}c@{}}
		&  &  &  & 1 &  & \\
		1 & 2 & 3 & 3 & 3 & 2 & 1
	\end{array}
	\right),
	\left(
	\begin{array}{@{}c@{}c@{}c@{}c@{}c@{}c@{}c@{}}
		&  &  &  & 2 &  & \\
		1 & 2 & 2 & 3 & 3 & 2 & 1
	\end{array}
	\right),
	\left(
	\begin{array}{@{}c@{}c@{}c@{}c@{}c@{}c@{}c@{}}
		&  &  &  & 2 &  & \\
		1 & 1 & 2 & 3 & 4 & 2 & 1
	\end{array}
	\right),
	\left(
	\begin{array}{@{}c@{}c@{}c@{}c@{}c@{}c@{}c@{}}
		&  &  &  & 2 &  & \\
		1 & 2 & 3 & 3 & 3 & 2 & 1
	\end{array}
	\right),
	\left(
	\begin{array}{@{}c@{}c@{}c@{}c@{}c@{}c@{}c@{}}
		&  &  &  & 2 &  & \\
		1 & 2 & 2 & 3 & 4 & 2 & 1
	\end{array}
	\right), \\&
	\left(
	\begin{array}{@{}c@{}c@{}c@{}c@{}c@{}c@{}c@{}}
		&  &  &  & 2 &  & \\
		1 & 1 & 2 & 3 & 4 & 3 & 1
	\end{array}
	\right),
	\left(
	\begin{array}{@{}c@{}c@{}c@{}c@{}c@{}c@{}c@{}}
		&  &  &  & 2 &  & \\
		1 & 2 & 3 & 3 & 4 & 2 & 1
	\end{array}
	\right),
	\left(
	\begin{array}{@{}c@{}c@{}c@{}c@{}c@{}c@{}c@{}}
		&  &  &  & 2 &  & \\
		1 & 2 & 2 & 3 & 4 & 3 & 1
	\end{array}
	\right),
	\left(
	\begin{array}{@{}c@{}c@{}c@{}c@{}c@{}c@{}c@{}}
		&  &  &  & 2 &  & \\
		1 & 2 & 3 & 4 & 4 & 2 & 1
	\end{array}
	\right),
	\left(
	\begin{array}{@{}c@{}c@{}c@{}c@{}c@{}c@{}c@{}}
		&  &  &  & 2 &  & \\
		1 & 2 & 3 & 3 & 4 & 3 & 1
	\end{array}
	\right),
	\left(
	\begin{array}{@{}c@{}c@{}c@{}c@{}c@{}c@{}c@{}}
		&  &  &  & 2 &  & \\
		1 & 2 & 3 & 4 & 4 & 3 & 1
	\end{array}
	\right),
	\left(
	\begin{array}{@{}c@{}c@{}c@{}c@{}c@{}c@{}c@{}}
		&  &  &  & 2 &  & \\
		1 & 2 & 3 & 4 & 5 & 3 & 1
	\end{array}
	\right), \\&
	\left(
	\begin{array}{@{}c@{}c@{}c@{}c@{}c@{}c@{}c@{}}
		&  &  &  & 3 &  & \\
		1 & 2 & 3 & 4 & 5 & 3 & 1
	\end{array}
	\right).
\end{align*}

\end{proposition}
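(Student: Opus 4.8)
The plan is to reproduce the strategy of Subsection~\ref{subsection:strategy} exactly as in the proof of Proposition~\ref{proposition:a:e:string:type}; the only structural difference is that here $\Sigma_\Phi \cong D_n$ is the subdiagram of $\Sigma_{\lambda, \Phi} \cong E_{n+1}$ obtained by deleting the \emph{end} node of the chain, which we have identified with $\lambda$. Under the correspondence between the labeling of the statement and Knapp's standard labeling of the $E_8$ diagram fixed before Subsection~\ref{subsection:a:e}, the node $\lambda$ plays the role of the end node $\alpha_1$, so that $n_\lambda$ equals the rightmost entry $a_1$ of the bottom row, and $\Phi = \{\alpha_1, \dots, \alpha_n\}$ corresponds to the $D_n$ sub-root-system spanned by all the remaining nodes. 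I would first record this dictionary so that the condition ``$\lambda$-coefficient equal to one'' becomes simply ``$a_1 = 1$''.

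Next I would compute $|I_\Phi^\lambda|$ by the subtraction formula of Subsection~\ref{subsection:strategy}: it equals the number of positive roots of $E_{n+1}$, minus the number of positive roots of $\Sigma_\Phi \cong D_n$ (the roots with $n_\lambda = 0$), minus the number of positive roots of $E_{n+1}$ with $n_\lambda \geq 2$. From Knapp's tables~\cite[p.~687--690]{K} one reads $36$, $63$, $120$ positive roots in $E_6, E_7, E_8$ and $20$, $30$, $42$ positive roots in $D_5, D_6, D_7$, while the positive roots with $a_1 \geq 2$ number $0$, $1$, $14$ for $n = 5, 6, 7$. This gives $|I_\Phi^\lambda| = 16, 32, 64$. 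A convenient independent check is that these equal $2^{n-1}$, the dimension of the half-spin representation of $D_n$, whose weights are precisely the roots of $E_{n+1}$ with $n_\lambda = 1$.

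Since $\Sigma_{\lambda, \Phi}$ is of type $E$, all its roots share a common length, so the length assertion is immediate and, as announced before Subsection~\ref{subsection:a:e}, requires no separate argument.

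It then remains to exhibit $|I_\Phi^\lambda|$ roots with $n_\lambda = 1$. Mimicking Proposition~\ref{proposition:a:e:string:type}, I would verify directly against Knapp's lists that the first $16$ displayed tuples are the positive roots of $E_6$ (those with $a_7 = a_8 = 0$), the first $32$ are the positive roots of $E_7$ (those with $a_8 = 0$), and all $64$ are positive roots of $E_8$, observing in each case that $a_1 = 1$ so that every displayed tuple lies in $I_\Phi^\lambda$. As the number of displayed tuples matches the count from the previous step, they exhaust the $\Phi$-string. The main obstacle is purely the bookkeeping: fixing the labeling dictionary correctly, counting the positive roots with $n_\lambda \geq 2$---the genuinely new feature relative to the $A_n$--$E_{n+1}$ case, since for $n = 6, 7$ these are nonempty and must be removed---and transcribing the long explicit list from the tables without error. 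The value $2^{n-1}$ is the safest guard against a miscount.
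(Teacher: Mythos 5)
Your proposal is correct and follows essentially the same route as the paper's proof: count $|I_\Phi^\lambda|$ as $|E_{n+1}^+|-|D_n^+|$ minus the number of positive roots with $\lambda$-coefficient at least two (giving $16$, $32$, $64$), then verify against Knapp's tables that the displayed tuples are positive roots of $E_6$, $E_7$, $E_8$ respectively with $\lambda$-coefficient one, the length claim being automatic since $\Sigma_{\lambda,\Phi}$ is of type $E$. Your added cross-check that these counts equal $2^{n-1}$, the dimension of a half-spin representation of $D_n$ whose weights are exactly the roots with $n_\lambda=1$, is a nice safeguard not present in the paper, but it does not change the argument.
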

\begin{proof}
The number of roots in the $\Phi$-string of $\lambda$ will be: the number of positive roots of the root system $\Sigma_{\lambda, \Phi} \cong E_{n+1}$, minus the number of positive roots of $\Sigma_\Phi \cong D_n$, and minus the number of (positive) roots in $\Sigma_{\lambda, \Phi}$ whose coefficient corresponding to $\lambda$ when expressed with respect to  $\Pi_{\lambda, \Phi}$ is greater than one, where $n \in \{5, 6, 7\}$. According to~\cite[p.~687--690]{K}, we have that $|I_\Phi^\lambda|$ is 16 if $n =5$, 32 if $n = 6$ and 64 if $n = 7$.

Now, one can check that the first 16 roots in the statement of this result are positive roots in an $E_6$ root system, the first 32 are positive roots in an $E_7$ root system, and all of them, 64, are positive roots in an $E_8$ root system (see~\cite[p.~687--690]{K}). They all belong to $I_\Phi^\lambda$ as their coefficient corresponding to $\lambda$ with respect to $\Pi_{\lambda, \Phi}$ is one.
\end{proof}

\subsection{$\Sigma_\Phi$ is an $E_n$ root subsystem of $\Sigma$}\label{subsection:e:e}

\begin{proposition}\label{proposition:e6:e7:string:type}
Let $ (\Sigma_\Phi, \Sigma_{\lambda, \Phi}) \cong (E_6, E_7)$ and let 
\begin{equation*}
\begin{tikzpicture}[scale=1.5]
\draw (1, 0) circle (0.1);
\draw (2, 0) circle (0.1);
\draw (3, 0) circle (0.1);
\draw (4, 0) circle (0.1);
\draw (5, 0) circle (0.1);
\draw (6, 0) circle (0.1);
\draw (4, 1) circle (0.1);
\draw (1.1, 0.) -- (1.9, 0.);
\draw (2., -0.3) node {$\alpha _6$};
\draw (2.1, 0.) -- (2.9, 0.);
\draw (3., -0.3) node {$\alpha _5$};
\draw (3.1, 0.) -- (3.9, 0.);
\draw (4., -0.3) node {$\alpha _4$};
\draw (4.1, 0.) -- (4.9, 0.);
\draw (5., -0.3) node {$\alpha _3$};
\draw (5.1, 0.) -- (5.9, 0.);
\draw (6., -0.3) node {$\alpha _1$};
\draw (4., 0.1) -- (4., 0.9);
\draw (4., 1.3) node {$\alpha _2$};
\draw (1, -0.3) node {$\lambda$};
\end{tikzpicture}
\end{equation*}
be the Dynkin diagram of $\Sigma_{\lambda, \Phi}$. Then, the $\Phi$-string of $\lambda$ consists of the following roots, and they all have the same length:

\begin{align*}
&\left(
\begin{array}{@{}c@{}c@{}c@{}c@{}c@{}c@{}c@{}}
	&  &  &  & 0 &  & \\
	0 & 1 & 0 & 0 & 0 & 0 & 0
\end{array}
\right),
\left(
\begin{array}{@{}c@{}c@{}c@{}c@{}c@{}c@{}c@{}}
	&  &  &  & 0 &  & \\
	0 & 1 & 1 & 0 & 0 & 0 & 0
\end{array}
\right),
\left(
\begin{array}{@{}c@{}c@{}c@{}c@{}c@{}c@{}c@{}}
	&  &  &  & 0 &  & \\
	0 & 1 & 1 & 1 & 0 & 0 & 0
\end{array}
\right),
\left(
\begin{array}{@{}c@{}c@{}c@{}c@{}c@{}c@{}c@{}}
	&  &  &  & 0 &  & \\
	0 & 1 & 1 & 1 & 1 & 0 & 0
\end{array}
\right),
\left(
\begin{array}{@{}c@{}c@{}c@{}c@{}c@{}c@{}c@{}}
	&  &  &  & 0 &  & \\
	0 & 1 & 1 & 1 & 1 & 1 & 0
\end{array}
\right),
\left(
\begin{array}{@{}c@{}c@{}c@{}c@{}c@{}c@{}c@{}}
	&  &  &  & 1 &  & \\
	0 & 1 & 1 & 1 & 1 & 0 & 0
\end{array}
\right),
\left(
\begin{array}{@{}c@{}c@{}c@{}c@{}c@{}c@{}c@{}}
	&  &  &  & 1 &  & \\
	0 & 1 & 1 & 1 & 1 & 1 & 0
\end{array}
\right), \\&
\left(
\begin{array}{@{}c@{}c@{}c@{}c@{}c@{}c@{}c@{}}
	&  &  &  & 0 &  & \\
	0 & 1 & 1 & 1 & 1 & 1 & 1
\end{array}
\right),
\left(
\begin{array}{@{}c@{}c@{}c@{}c@{}c@{}c@{}c@{}}
	&  &  &  & 1 &  & \\
	0 & 1 & 1 & 1 & 2 & 1 & 0
\end{array}
\right),
\left(
\begin{array}{@{}c@{}c@{}c@{}c@{}c@{}c@{}c@{}}
	&  &  &  & 1 &  & \\
	0 & 1 & 1 & 1 & 1 & 1 & 1
\end{array}
\right),
\left(
\begin{array}{@{}c@{}c@{}c@{}c@{}c@{}c@{}c@{}}
	&  &  &  & 1 &  & \\
	0 & 1 & 1 & 2 & 2 & 1 & 0
\end{array}
\right),
\left(
\begin{array}{@{}c@{}c@{}c@{}c@{}c@{}c@{}c@{}}
	&  &  &  & 1 &  & \\
	0 & 1 & 1 & 1 & 2 & 1 & 1
\end{array}
\right),
\left(
\begin{array}{@{}c@{}c@{}c@{}c@{}c@{}c@{}c@{}}
	&  &  &  & 1 &  & \\
	0 & 1 & 2 & 2 & 2 & 1 & 0
\end{array}
\right),
\left(
\begin{array}{@{}c@{}c@{}c@{}c@{}c@{}c@{}c@{}}
	&  &  &  & 1 &  & \\
	0 & 1 & 1 & 2 & 2 & 1 & 1
\end{array}
\right), 
\end{align*}
\begin{align*}
&
\left(
\begin{array}{@{}c@{}c@{}c@{}c@{}c@{}c@{}c@{}}
	&  &  &  & 1 &  & \\
	0 & 1 & 1 & 1 & 2 & 2 & 1
\end{array}
\right),
\left(
\begin{array}{@{}c@{}c@{}c@{}c@{}c@{}c@{}c@{}}
	&  &  &  & 1 &  & \\
	0 & 1 & 2 & 2 & 2 & 1 & 1
\end{array}
\right),
\left(
\begin{array}{@{}c@{}c@{}c@{}c@{}c@{}c@{}c@{}}
	&  &  &  & 1 &  & \\
	0 & 1 & 1 & 2 & 2 & 2 & 1
\end{array}
\right),
\left(
\begin{array}{@{}c@{}c@{}c@{}c@{}c@{}c@{}c@{}}
	&  &  &  & 1 &  & \\
	0 & 1 & 2 & 2 & 2 & 2 & 1
\end{array}
\right),
\left(
\begin{array}{@{}c@{}c@{}c@{}c@{}c@{}c@{}c@{}}
	&  &  &  & 1 &  & \\
	0 & 1 & 1 & 2 & 3 & 2 & 1
\end{array}
\right),
\left(
\begin{array}{@{}c@{}c@{}c@{}c@{}c@{}c@{}c@{}}
	&  &  &  & 1 &  & \\
	0 & 1 & 2 & 2 & 3 & 2 & 1
\end{array}
\right),
\left(
\begin{array}{@{}c@{}c@{}c@{}c@{}c@{}c@{}c@{}}
	&  &  &  & 2 &  & \\
	0 & 1 & 1 & 2 & 3 & 2 & 1
\end{array}
\right), \\&
\left(
\begin{array}{@{}c@{}c@{}c@{}c@{}c@{}c@{}c@{}}
	&  &  &  & 1 &  & \\
	0 & 1 & 2 & 3 & 3 & 2 & 1
\end{array}
\right),
\left(
\begin{array}{@{}c@{}c@{}c@{}c@{}c@{}c@{}c@{}}
	&  &  &  & 2 &  & \\
	0 & 1 & 2 & 2 & 3 & 2 & 1
\end{array}
\right),
\left(
\begin{array}{@{}c@{}c@{}c@{}c@{}c@{}c@{}c@{}}
	&  &  &  & 2 &  & \\
	0 & 1 & 2 & 3 & 3 & 2 & 1
\end{array}
\right),
\left(
\begin{array}{@{}c@{}c@{}c@{}c@{}c@{}c@{}c@{}}
	&  &  &  & 2 &  & \\
	0 & 1 & 2 & 3 & 4 & 2 & 1
\end{array}
\right),
\left(
\begin{array}{@{}c@{}c@{}c@{}c@{}c@{}c@{}c@{}}
	&  &  &  & 2 &  & \\
	0 & 1 & 2 & 3 & 4 & 3 & 1
\end{array}
\right),
\left(
\begin{array}{@{}c@{}c@{}c@{}c@{}c@{}c@{}c@{}}
	&  &  &  & 2 &  & \\
	0 & 1 & 2 & 3 & 4 & 3 & 2
\end{array}
\right).
\end{align*}

\end{proposition}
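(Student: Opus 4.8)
The plan is to follow the two-step strategy of Subsection~\ref{subsection:strategy}: first determine the cardinality $|I_\Phi^\lambda|$ purely by counting, and then exhibit exactly that many explicit elements of the $\Phi$-string by reading them off Knapp's list of positive roots of $E_7$. Recall from Proposition~\ref{proposition:root:system:lambda:phi} that $\Pi_{\lambda,\Phi}=\{\lambda\}\cup\Phi$ is a simple system for $\Sigma_{\lambda,\Phi}\cong E_7$, and that computing $I_\Phi^\lambda$ amounts to collecting the positive roots of $\Sigma_{\lambda,\Phi}$ whose coefficient $n_\lambda$ with respect to $\Pi_{\lambda,\Phi}$ equals one.

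For the count, I would use that $E_7$ has $63$ positive roots while $\Sigma_\Phi\cong E_6$ has $36$; the roots with $n_\lambda=0$ are exactly the latter. The point to settle is that there are no positive roots of $\Sigma_{\lambda,\Phi}$ with $n_\lambda\ge 2$. This follows from the position of $\lambda$ in the diagram: reading the $E_7$ Dynkin diagram above, $\lambda$ is the terminal node of the longest of the three arms emanating from the branch node $\alpha_4$, which is precisely the node whose coefficient in the highest root of $E_7$ equals one. Since the coefficient of any simple root in an arbitrary positive root never exceeds its coefficient in the highest root, every positive root of $\Sigma_{\lambda,\Phi}$ satisfies $n_\lambda\in\{0,1\}$. (Alternatively, one inspects the list in~\cite[p.~688]{K} directly.) Hence $|I_\Phi^\lambda|=63-36=27$.

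It then remains to produce $27$ roots of $I_\Phi^\lambda$. Each of the $27$ tuples displayed in the statement is, by direct comparison with the list of positive roots of $E_7$ in~\cite[p.~688]{K}, a positive root of $\Sigma_{\lambda,\Phi}$; and each has its $\lambda$-coefficient (the second entry of the bottom row, in the convention fixed before Subsection~\ref{subsection:a:e}) equal to one, so each lies in the $\Phi$-string of $\lambda$. As there are exactly $27$ of them and $|I_\Phi^\lambda|=27$, these are all the roots of the $\Phi$-string. Finally, since $E_7$ is simply laced, all its roots have a common length, namely $|\lambda|$, as announced.

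I expect the only genuine obstacle to be bookkeeping: one must check that the $27$ listed seven-tuples are pairwise distinct, appear in Knapp's table, and exhaust the $n_\lambda=1$ layer. The structural observation about the longest arm is what makes the count short; everything else is a verification against the tabulated root list rather than a computation from first principles.
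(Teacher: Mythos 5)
Your proposal is correct and follows essentially the same two-step argument as the paper's proof: compute $|I_\Phi^\lambda| = 63 - 36 = 27$ via the strategy of Subsection~\ref{subsection:strategy}, then verify that the $27$ listed tuples are positive roots of $E_7$ (per Knapp's table) with $\lambda$-coefficient equal to one, the common length being automatic since $E_7$ is simply laced. Your highest-root observation --- that $\lambda$ sits at the node whose coefficient in the highest root of $E_7$ is one, so no positive root has $n_\lambda \geq 2$ --- is a valid structural justification of a step the paper settles implicitly by inspecting the tabulated root list.
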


\begin{proof}
The number of roots in the $\Phi$-string of $\lambda$ will be: the number of positive roots of the root system $\Sigma_{\lambda, \Phi} \cong E_7$, minus the number of positive roots of $\Sigma_\Phi \cong E_6$, and minus the number of (positive) roots of $\Sigma_{\lambda, \Phi}$ whose coefficient corresponding to $\lambda$ when expressed with respect to $\Pi_{\lambda, \Phi}$ is greater than one. According to~\cite[p.~687--690]{K}, we have $|I_\Phi^\lambda| = 63 - 36 =27$.

Now, on the one hand, one can check that all the roots in the statement of this result are positive roots in an $E_7$ root system (see~\cite[p.~687--688]{K}). On the other hand, they all belong to $I_\Phi^\lambda$ as their coefficient corresponding to $\lambda$ with respect to $\Pi_{\lambda, \Phi}$ is one. 
\end{proof}

\begin{proposition}\label{proposition:e7:e8:string:type}
Let $ (\Sigma_\Phi, \Sigma_{\lambda, \Phi}) \cong (E_7, E_8)$ and let 
\begin{equation*}\label{string:e7:e8}
\begin{tikzpicture}[scale=1.5]
\draw (0, 0) circle (0.1);
\draw (1, 0) circle (0.1);
\draw (2, 0) circle (0.1);
\draw (3, 0) circle (0.1);
\draw (4, 0) circle (0.1);
\draw (5, 0) circle (0.1);
\draw (6, 0) circle (0.1);
\draw (4, 1) circle (0.1);
\draw (0.1, 0.) -- (0.9, 0.);
\draw (1., -0.3) node {$\alpha _7$};
\draw (1.1, 0.) -- (1.9, 0.);
\draw (2., -0.3) node {$\alpha _6$};
\draw (2.1, 0.) -- (2.9, 0.);
\draw (3., -0.3) node {$\alpha _5$};
\draw (3.1, 0.) -- (3.9, 0.);
\draw (4., -0.3) node {$\alpha _4$};
\draw (4.1, 0.) -- (4.9, 0.);
\draw (5., -0.3) node {$\alpha _3$};
\draw (5.1, 0.) -- (5.9, 0.);
\draw (6., -0.3) node {$\alpha _1$};
\draw (4., 0.1) -- (4., 0.9);
\draw (4., 1.3) node {$\alpha _2$};
\draw (0, -0.3) node {$\lambda$};
\end{tikzpicture}
\end{equation*}
be the Dynkin diagram of $\Sigma_{\lambda, \Phi}$. Then, the $\Phi$-string of $\lambda$ consists of the following roots, and they all have the same length:
\begin{align*}
&\left(
\begin{array}{@{}c@{}c@{}c@{}c@{}c@{}c@{}c@{}}
	&  &  &  & 0 &  & \\
	1 & 0 & 0 & 0 & 0 & 0 & 0
\end{array}
\right),
\left(
\begin{array}{@{}c@{}c@{}c@{}c@{}c@{}c@{}c@{}}
	&  &  &  & 0 &  & \\
	1 & 1 & 0 & 0 & 0 & 0 & 0
\end{array}
\right),
\left(
\begin{array}{@{}c@{}c@{}c@{}c@{}c@{}c@{}c@{}}
	&  &  &  & 0 &  & \\
	1 & 1 & 1 & 0 & 0 & 0 & 0
\end{array}
\right),
\left(
\begin{array}{@{}c@{}c@{}c@{}c@{}c@{}c@{}c@{}}
	&  &  &  & 0 &  & \\
	1 & 1 & 1 & 1 & 0 & 0 & 0
\end{array}
\right),
\left(
\begin{array}{@{}c@{}c@{}c@{}c@{}c@{}c@{}c@{}}
	&  &  &  & 0 &  & \\
	1 & 1 & 1 & 1 & 1 & 0 & 0
\end{array}
\right),
\left(
\begin{array}{@{}c@{}c@{}c@{}c@{}c@{}c@{}c@{}}
	&  &  &  & 0 &  & \\
	1 & 1 & 1 & 1 & 1 & 1 & 0
\end{array}
\right),
\left(
\begin{array}{@{}c@{}c@{}c@{}c@{}c@{}c@{}c@{}}
	&  &  &  & 1 &  & \\
	1 & 1 & 1 & 1 & 1 & 0 & 0
\end{array}
\right), \\&
\left(
\begin{array}{@{}c@{}c@{}c@{}c@{}c@{}c@{}c@{}}
	&  &  &  & 1 &  & \\
	1 & 1 & 1 & 1 & 1 & 1 & 0
\end{array}
\right),
\left(
\begin{array}{@{}c@{}c@{}c@{}c@{}c@{}c@{}c@{}}
	&  &  &  & 0 &  & \\
	1 & 1 & 1 & 1 & 1 & 1 & 1
\end{array}
\right),
\left(
\begin{array}{@{}c@{}c@{}c@{}c@{}c@{}c@{}c@{}}
	&  &  &  & 1 &  & \\
	1 & 1 & 1 & 1 & 2 & 1 & 0
\end{array}
\right),
\left(
\begin{array}{@{}c@{}c@{}c@{}c@{}c@{}c@{}c@{}}
	&  &  &  & 1 &  & \\
	1 & 1 & 1 & 1 & 1 & 1 & 1
\end{array}
\right),
\left(
\begin{array}{@{}c@{}c@{}c@{}c@{}c@{}c@{}c@{}}
	&  &  &  & 1 &  & \\
	1 & 1 & 1 & 2 & 2 & 1 & 0
\end{array}
\right),
\left(
\begin{array}{@{}c@{}c@{}c@{}c@{}c@{}c@{}c@{}}
	&  &  &  & 1 &  & \\
	1 & 1 & 1 & 1 & 2 & 1 & 1
\end{array}
\right),
\left(
\begin{array}{@{}c@{}c@{}c@{}c@{}c@{}c@{}c@{}}
	&  &  &  & 1 &  & \\
	1 & 1 & 2 & 2 & 2 & 1 & 0
\end{array}
\right), \\&
\left(
\begin{array}{@{}c@{}c@{}c@{}c@{}c@{}c@{}c@{}}
	&  &  &  & 1 &  & \\
	1 & 1 & 1 & 2 & 2 & 1 & 1
\end{array}
\right),
\left(
\begin{array}{@{}c@{}c@{}c@{}c@{}c@{}c@{}c@{}}
	&  &  &  & 1 &  & \\
	1 & 1 & 1 & 1 & 2 & 2 & 1
\end{array}
\right),
\left(
\begin{array}{@{}c@{}c@{}c@{}c@{}c@{}c@{}c@{}}
	&  &  &  & 1 &  & \\
	1 & 2 & 2 & 2 & 2 & 1 & 0
\end{array}
\right),
\left(
\begin{array}{@{}c@{}c@{}c@{}c@{}c@{}c@{}c@{}}
	&  &  &  & 1 &  & \\
	1 & 1 & 2 & 2 & 2 & 1 & 1
\end{array}
\right),
\left(
\begin{array}{@{}c@{}c@{}c@{}c@{}c@{}c@{}c@{}}
	&  &  &  & 1 &  & \\
	1 & 1 & 1 & 2 & 2 & 2 & 1
\end{array}
\right),
\left(
\begin{array}{@{}c@{}c@{}c@{}c@{}c@{}c@{}c@{}}
	&  &  &  & 1 &  & \\
	1 & 2 & 2 & 2 & 2 & 1 & 1
\end{array}
\right),
\left(
\begin{array}{@{}c@{}c@{}c@{}c@{}c@{}c@{}c@{}}
	&  &  &  & 1 &  & \\
	1 & 1 & 2 & 2 & 2 & 2 & 1
\end{array}
\right), \\&
\left(
\begin{array}{@{}c@{}c@{}c@{}c@{}c@{}c@{}c@{}}
	&  &  &  & 1 &  & \\
	1 & 1 & 1 & 2 & 3 & 2 & 1
\end{array}
\right),
\left(
\begin{array}{@{}c@{}c@{}c@{}c@{}c@{}c@{}c@{}}
	&  &  &  & 1 &  & \\
	1 & 2 & 2 & 2 & 2 & 2 & 1
\end{array}
\right),
\left(
\begin{array}{@{}c@{}c@{}c@{}c@{}c@{}c@{}c@{}}
	&  &  &  & 1 &  & \\
	1 & 1 & 2 & 2 & 3 & 2 & 1
\end{array}
\right),
\left(
\begin{array}{@{}c@{}c@{}c@{}c@{}c@{}c@{}c@{}}
	&  &  &  & 2 &  & \\
	1 & 1 & 1 & 2 & 3 & 2 & 1
\end{array}
\right),
\left(
\begin{array}{@{}c@{}c@{}c@{}c@{}c@{}c@{}c@{}}
	&  &  &  & 1 &  & \\
	1 & 2 & 2 & 2 & 3 & 2 & 1
\end{array}
\right),
\left(
\begin{array}{@{}c@{}c@{}c@{}c@{}c@{}c@{}c@{}}
	&  &  &  & 1 &  & \\
	1 & 1 & 2 & 3 & 3 & 2 & 1
\end{array}
\right),
\left(
\begin{array}{@{}c@{}c@{}c@{}c@{}c@{}c@{}c@{}}
	&  &  &  & 2 &  & \\
	1 & 1 & 2 & 2 & 3 & 2 & 1
\end{array}
\right), 
\end{align*}
\begin{align*}
&
\left(
\begin{array}{@{}c@{}c@{}c@{}c@{}c@{}c@{}c@{}}
	&  &  &  & 1 &  & \\
	1 & 2 & 2 & 3 & 3 & 2 & 1
\end{array}
\right),
\left(
\begin{array}{@{}c@{}c@{}c@{}c@{}c@{}c@{}c@{}}
	&  &  &  & 2 &  & \\
	1 & 2 & 2 & 2 & 3 & 2 & 1
\end{array}
\right),
\left(
\begin{array}{@{}c@{}c@{}c@{}c@{}c@{}c@{}c@{}}
	&  &  &  & 2 &  & \\
	1 & 1 & 2 & 3 & 3 & 2 & 1
\end{array}
\right),
\left(
\begin{array}{@{}c@{}c@{}c@{}c@{}c@{}c@{}c@{}}
	&  &  &  & 1 &  & \\
	1 & 2 & 3 & 3 & 3 & 2 & 1
\end{array}
\right),
\left(
\begin{array}{@{}c@{}c@{}c@{}c@{}c@{}c@{}c@{}}
	&  &  &  & 2 &  & \\
	1 & 2 & 2 & 3 & 3 & 2 & 1
\end{array}
\right),
\left(
\begin{array}{@{}c@{}c@{}c@{}c@{}c@{}c@{}c@{}}
	&  &  &  & 2 &  & \\
	1 & 1 & 2 & 3 & 4 & 2 & 1
\end{array}
\right),
\left(
\begin{array}{@{}c@{}c@{}c@{}c@{}c@{}c@{}c@{}}
	&  &  &  & 2 &  & \\
	1 & 2 & 3 & 3 & 3 & 2 & 1
\end{array}
\right),\\&
\left(
\begin{array}{@{}c@{}c@{}c@{}c@{}c@{}c@{}c@{}}
	&  &  &  & 2 &  & \\
	1 & 2 & 2 & 3 & 4 & 2 & 1
\end{array}
\right),
\left(
\begin{array}{@{}c@{}c@{}c@{}c@{}c@{}c@{}c@{}}
	&  &  &  & 2 &  & \\
	1 & 1 & 2 & 3 & 4 & 3 & 1
\end{array}
\right),
\left(
\begin{array}{@{}c@{}c@{}c@{}c@{}c@{}c@{}c@{}}
	&  &  &  & 2 &  & \\
	1 & 2 & 3 & 3 & 4 & 2 & 1
\end{array}
\right),
\left(
\begin{array}{@{}c@{}c@{}c@{}c@{}c@{}c@{}c@{}}
	&  &  &  & 2 &  & \\
	1 & 2 & 2 & 3 & 4 & 3 & 1
\end{array}
\right),
\left(
\begin{array}{@{}c@{}c@{}c@{}c@{}c@{}c@{}c@{}}
	&  &  &  & 2 &  & \\
	1 & 1 & 2 & 3 & 4 & 3 & 2
\end{array}
\right),
\left(
\begin{array}{@{}c@{}c@{}c@{}c@{}c@{}c@{}c@{}}
	&  &  &  & 2 &  & \\
	1 & 2 & 3 & 4 & 4 & 2 & 1
\end{array}
\right), 
\left(
\begin{array}{@{}c@{}c@{}c@{}c@{}c@{}c@{}c@{}}
	&  &  &  & 2 &  & \\
	1 & 2 & 3 & 3 & 4 & 3 & 1
\end{array}
\right), \\ 
&\left(
\begin{array}{@{}c@{}c@{}c@{}c@{}c@{}c@{}c@{}}
	&  &  &  & 2 &  & \\
	1 & 2 & 2 & 3 & 4 & 3 & 2
\end{array}
\right),
\left(
\begin{array}{@{}c@{}c@{}c@{}c@{}c@{}c@{}c@{}}
	&  &  &  & 2 &  & \\
	1 & 2 & 3 & 4 & 4 & 3 & 1
\end{array}
\right),
\left(
\begin{array}{@{}c@{}c@{}c@{}c@{}c@{}c@{}c@{}}
	&  &  &  & 2 &  & \\
	1 & 2 & 3 & 3 & 4 & 3 & 2
\end{array}
\right),
\left(
\begin{array}{@{}c@{}c@{}c@{}c@{}c@{}c@{}c@{}}
	&  &  &  & 2 &  & \\
	1 & 2 & 3 & 4 & 5 & 3 & 1
\end{array}
\right),
\left(
\begin{array}{@{}c@{}c@{}c@{}c@{}c@{}c@{}c@{}}
	&  &  &  & 2 &  & \\
	1 & 2 & 3 & 4 & 4 & 3 & 2
\end{array}
\right),
\left(
\begin{array}{@{}c@{}c@{}c@{}c@{}c@{}c@{}c@{}}
	&  &  &  & 3 &  & \\
	1 & 2 & 3 & 4 & 5 & 3 & 1
\end{array}
\right),
\left(
\begin{array}{@{}c@{}c@{}c@{}c@{}c@{}c@{}c@{}}
	&  &  &  & 2 &  & \\
	1 & 2 & 3 & 4 & 5 & 3 & 2
\end{array}
\right), \\&
\left(
\begin{array}{@{}c@{}c@{}c@{}c@{}c@{}c@{}c@{}}
	&  &  &  & 2 &  & \\
	1 & 2 & 3 & 4 & 5 & 4 & 2
\end{array}
\right),
\left(
\begin{array}{@{}c@{}c@{}c@{}c@{}c@{}c@{}c@{}}
	&  &  &  & 3 &  & \\
	1 & 2 & 3 & 4 & 5 & 3 & 2
\end{array}
\right),
\left(
\begin{array}{@{}c@{}c@{}c@{}c@{}c@{}c@{}c@{}}
	&  &  &  & 3 &  & \\
	1 & 2 & 3 & 4 & 5 & 4 & 2
\end{array}
\right),
\left(
\begin{array}{@{}c@{}c@{}c@{}c@{}c@{}c@{}c@{}}
	&  &  &  & 3 &  & \\
	1 & 2 & 3 & 4 & 6 & 4 & 2
\end{array}
\right),
\left(
\begin{array}{@{}c@{}c@{}c@{}c@{}c@{}c@{}c@{}}
	&  &  &  & 3 &  & \\
	1 & 2 & 3 & 5 & 6 & 4 & 2
\end{array}
\right),
\left(
\begin{array}{@{}c@{}c@{}c@{}c@{}c@{}c@{}c@{}}
	&  &  &  & 3 &  & \\
	1 & 2 & 4 & 5 & 6 & 4 & 2
\end{array}
\right),
\left(
\begin{array}{@{}c@{}c@{}c@{}c@{}c@{}c@{}c@{}}
	&  &  &  & 3 &  & \\
	1 & 3 & 4 & 5 & 6 & 4 & 2
\end{array}
\right).
\end{align*}
\end{proposition}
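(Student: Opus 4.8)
The plan is to run the strategy of Subsection~\ref{subsection:strategy} in exactly the same way as in Propositions~\ref{proposition:a:e:string:type}, \ref{proposition:d:e:string:type} and~\ref{proposition:e6:e7:string:type}. Since $\Sigma_{\lambda,\Phi}\cong E_8$ is again of type $E$, every root of $I_\Phi^\lambda$ automatically has the same length as $\lambda$, so (as announced at the beginning of Section~\ref{section:exceptional:root:system}) no separate length argument is required and I only have to produce the roots and count them.

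First I would count. By Subsection~\ref{subsection:strategy}, $|I_\Phi^\lambda|$ is the number of positive roots of $\Sigma_{\lambda,\Phi}\cong E_8$, minus the number of positive roots of $\Sigma_\Phi\cong E_7$, minus the number of positive roots of $E_8$ whose coefficient on $\lambda$ (the entry $a_8$ in the notation fixed above) is at least two. Reading off~\cite[p.~688--690]{K}, the first two numbers are $120$ and $63$. The decisive point, which is where this case genuinely differs from the $(E_6,E_7)$ case of Proposition~\ref{proposition:e6:e7:string:type}, is that $\lambda=\alpha_8$ is \emph{not} cominuscule: it carries coefficient $2$ in the highest root, so exactly one positive root has $a_8\geq 2$. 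I would justify this both by direct inspection of Knapp's list and conceptually: every coefficient of a positive root is bounded by the corresponding coefficient of the highest root, so $a_8\leq 2$; as $\alpha_8$ is joined only to $\alpha_7$ one has $A_{\alpha_8,\beta}=2a_8-a_7$, and since all roots of $E_8$ have equal length, Proposition~\ref{proposition:strings}~(\ref{proposition:strings:3}),~(\ref{proposition:strings:5}) force every $\alpha_8$-string to have at most two roots; hence a positive root with $a_8=2$ admits no lengthening by $\alpha_8$, which pins down $a_7=3$, and iterating this remark along the long arm of the diagram forces $\beta$ to be the highest root $\left(\begin{smallmatrix}& & & & 3 & & \\ 2 & 3 & 4 & 5 & 6 & 4 & 2\end{smallmatrix}\right)$. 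Thus $|I_\Phi^\lambda|=120-63-1=56$.

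Then I would exhibit the $56$ roots displayed in the statement. Each listed tuple is to be checked to be a positive root of $E_8$ against the explicit enumeration in~\cite[p.~689--690]{K}, and each has $a_8=1$, so each lies in $I_\Phi^\lambda$ by Proposition~\ref{proposition:root:system:lambda:phi} (the $\Phi$-string is precisely the set of roots of $\Sigma_{\lambda,\Phi}$ with $\lambda$-coefficient equal to one). Since there are exactly $56$ of them and $|I_\Phi^\lambda|=56$, they are all the roots of the $\Phi$-string, and they share the length of $\lambda$ as already observed.

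The hard part is entirely the bookkeeping. There is no conceptual obstacle beyond (i) establishing cleanly that the highest root is the unique positive root of $E_8$ with $a_8\geq 2$, and (ii) matching all $56$ tuples against the corresponding entries of Knapp's table without transcription error. The finite verification in~(ii) is long but routine, and~(i) is precisely the point that distinguishes this computation from the abelian-nilradical situation of the $(E_6,E_7)$ case.
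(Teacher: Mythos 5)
Your proposal is correct and follows essentially the same route as the paper: compute $|I_\Phi^\lambda| = 120 - 63 - 1 = 56$ from Knapp's tables and then verify that the $56$ listed tuples are positive roots of $E_8$ with $\lambda$-coefficient equal to one, the common length being automatic since $E_8$ is simply laced. Your additional conceptual argument that the highest root $\left(\begin{smallmatrix}& & & & 3 & & \\ 2 & 3 & 4 & 5 & 6 & 4 & 2\end{smallmatrix}\right)$ is the unique positive root with $a_8 \geq 2$ is a pleasant refinement of the ``$-1$'' term, which the paper simply reads off the tables in \cite[p.~688--690]{K}.
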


\begin{proof}
The number of roots in the $\Phi$-string of $\lambda$ will be: the number of positive roots of the root system $\Sigma_{\lambda, \Phi} \cong E_{8}$, minus the number of positive roots of $\Sigma_\Phi \cong E_7$, and minus the number of (positive) roots of $\Sigma_{\lambda, \Phi}$ whose coefficient corresponding to $\lambda$ when expressed with respect to $\Pi_{\lambda, \Phi}$ is greater than one. According to~\cite[p.~688--690]{K}, we have that $|I_\Phi^\lambda| = 120 - 63 -1 =56$.
	
Now, on the one hand, one can check that all the roots in the statement of this result are positive roots in an $E_8$ root system (see~\cite[p.~688--690]{K}). On the other hand, they all belong to $I_\Phi^\lambda$ as their coefficient corresponding to $\lambda$ with respect to $\Pi_{\lambda, \Phi}$ is one.
\end{proof}

\subsection{$\Sigma_\Phi$ is a $B_n$ root subsystem of $\Sigma$}\label{subsection:b:f}

\begin{proposition}\label{proposition:b:f:string:type}
Let $ (\Sigma_\Phi, \Sigma_{\lambda, \Phi}) \cong (B_3, F_4)$ and let 
\\
\begin{equation*}
\begin{tikzpicture}[scale=1.5]
\draw (0, 0) circle (0.1);
\draw (1, 0) circle (0.1);
\draw (2, 0) circle (0.1);
\draw (3, 0) circle (0.1);
\draw (0.1, 0.) -- (0.9, 0.);
\draw (1., -0.3) node {$\alpha _1$};
\draw (1.1, -0.04) -- (1.9, -0.04);
\draw (1.1, 0.04) -- (1.9, 0.04);
\draw (2., -0.3) node {$\alpha _2$};
\draw (2.1, 0.) -- (2.9, 0.);
\draw (3., -0.3) node {$\alpha_3$};
\draw (0, -0.3) node {$\lambda$};
\end{tikzpicture}
\end{equation*}
be the Dynkin diagram of $\Sigma_{\lambda, \Phi}$. Then, the $\Phi$-string of $\lambda$ consists of the following roots, and they all have the same length:
\begin{align*}
(1 0 0 0), (1 1 0 0), (1 1 1 0), (1 1 1 1), (1 2 1 0), (1 2 1 1), (1 2 2 1), (1 3 2 1).
\end{align*}
\end{proposition}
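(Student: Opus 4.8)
The plan is to follow the strategy of Subsection~\ref{subsection:strategy} in the form tailored to exceptional systems: first determine $|I_\Phi^\lambda|$, then exhibit that many roots explicitly using Knapp's enumeration of the positive roots of $F_4$. By Proposition~\ref{proposition:root:system:lambda:phi}, $\Pi_{\lambda,\Phi}=\{\lambda\}\cup\Phi$ is a simple system for $\Sigma_{\lambda,\Phi}\cong F_4$; with the notation $(b_1b_2b_3b_4)$ and the identification $\lambda=\beta_1$, $\alpha_1=\beta_2$, $\alpha_2=\beta_3$, $\alpha_3=\beta_4$, the $\Phi$-string of $\lambda$ is exactly the set of positive roots of $F_4$ with $b_1=1$ (every such root is positive by Proposition~\ref{proposition:root:system:lambda:phi}).

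For the count I would write $|I_\Phi^\lambda|$ as the number of positive roots of $F_4$ (namely $24$), minus the positive roots with $b_1=0$ (these are precisely the positive roots of $\Sigma_\Phi\cong B_3$, of which there are $9$), minus the positive roots with $b_1\geq2$. The highest root of $F_4$ has $\lambda$-coefficient $2$, so $b_1=2$ is the maximal value; reading Knapp's table \cite[p.~691]{K} one finds exactly $7$ positive roots with $b_1=2$, whence $|I_\Phi^\lambda|=24-9-7=8$. I would then verify that each of the eight roots displayed in the statement appears in that same list and has $b_1=1$, so that each lies in $I_\Phi^\lambda$; since there are exactly eight of them, they constitute the whole string.

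The main obstacle, and the feature separating this case from the simply-laced Subsections~\ref{subsection:a:e}--\ref{subsection:e:e}, is the claim on lengths: $F_4$ has roots of two different lengths, so equality of length is not automatic. Here $\lambda=\beta_1$ is a \emph{short} root, and the assertion is that every root of the string is short too. I would establish this either by reading the lengths off Knapp's table, or by a direct computation with the Gram matrix of $\Pi_{\lambda,\Phi}$: normalizing $|\beta_1|^2=|\beta_2|^2=1$ and $|\beta_3|^2=|\beta_4|^2=2$, with $\langle\beta_1,\beta_2\rangle=-\tfrac12$, $\langle\beta_2,\beta_3\rangle=\langle\beta_3,\beta_4\rangle=-1$, and all remaining off-diagonal products zero, one builds up each root one simple summand at a time and repeatedly applies $|\mu+\nu|^2=|\mu|^2+|\nu|^2+2\langle\mu,\nu\rangle$. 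This is a short finite check giving $|\cdot|^2=1=|\lambda|^2$ for all eight roots, which completes the proof.
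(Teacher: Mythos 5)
Your proposal is correct and follows essentially the same route as the paper: the count $|I_\Phi^\lambda| = 24 - 9 - 7 = 8$ via the strategy of Subsection~\ref{subsection:strategy}, then matching the eight displayed roots against Knapp's list of positive $F_4$ roots with $\lambda$-coefficient equal to one. Your explicit Gram-matrix verification of the lengths (with the correct identification that $\lambda=\beta_1$ is short in Knapp's ordering) is simply the paper's ``straightforward computation using the Cartan integers'' carried out in detail, and it is a welcome addition since, as you rightly note, equality of lengths is not automatic in the non-simply-laced $F_4$ case.
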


\begin{proof}
The number of roots in the $\Phi$-string of $\lambda$ will be: the number of positive roots of the root system $\Sigma_{\lambda, \Phi} \cong F_4$, minus the number of positive roots of $\Sigma_\Phi \cong B_3$, and minus the number of (positive) roots of $\Sigma_{\lambda, \Phi}$ whose coefficient corresponding to $\lambda$ when expressed with respect to $\Pi_{\lambda, \Phi}$ is greater than one. According to~\cite[p.~691]{K}, we have $|I_\Phi^\lambda| = 24 - 9 -7 =8$.

Now, on the one hand, one can check that all the roots in the statement of this result are positive roots in an $F_4$ root system (see~\cite[p.~691]{K}). On the other hand, they all belong to $I_\Phi^\lambda$ as their coefficient corresponding to $\lambda$ with respect to $\Pi_{\lambda, \Phi}$ is one. The claim concerning the lengths follows from a straightforward computation using the Cartan~integers.
\end{proof}

\subsection{$\Sigma_\Phi$ is a $C_n$ root subsystem of $\Sigma$}\label{subsection:c:f}

\begin{proposition}\label{proposition:c:f:string:type}
Let $ (\Sigma_\Phi, \Sigma_{\lambda, \Phi}) \cong (C_3, F_4)$ and let 
\\
\begin{equation*}\label{string:b:f}
\begin{tikzpicture}[scale=1.5]
\draw (0, 0) circle (0.1);
\draw (1, 0) circle (0.1);
\draw (2, 0) circle (0.1);
\draw (3, 0) circle (0.1);
\draw (0.1, 0.) -- (0.9, 0.);
\draw (1., -0.3) node {$\alpha _2$};
\draw (1.1, -0.04) -- (1.9, -0.04);
\draw (1.1, 0.04) -- (1.9, 0.04);
\draw (2., -0.3) node {$\alpha _1$};
\draw (2.1, 0.) -- (2.9, 0.);
\draw (3., -0.3) node {$\lambda$};
\draw (0, -0.3) node {$\alpha _3$};
\end{tikzpicture}
\end{equation*}
be the Dynkin diagram of $\Sigma_{\lambda, \Phi}$. Then, the $\Phi$-string of $\lambda$ consists of the following roots, and these expressed with single parenthesis have twice the squared norm of these with double:
\begin{align*}
& (0 0 0 1), (0 0 1 1), ((0 1 1 1)), (0 2 1 1), ((1 1 1 1)), (0 2 2 1), ((1 2 1 1)),\\& ((1 2 2 1)), (2 2 1 1), ((1 3 2 1)), (2 2 2 1), ((2 3 2 1)), (2 4 2 1), (2 4 3 1).
\end{align*}
	
\end{proposition}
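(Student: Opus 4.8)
The plan is to repeat the scheme used for the other exceptional pairs (compare Propositions~\ref{proposition:a:e:string:type}--\ref{proposition:b:f:string:type}): first determine $|I_\Phi^\lambda|$ through the counting recipe of Subsection~\ref{subsection:strategy}, and then display exactly that many positive roots of $\Sigma_{\lambda,\Phi}\cong F_4$ whose $\lambda$-coefficient equals one, simply collecting them from Knapp's list of positive $F_4$ roots in~\cite[p.~691]{K}.

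For the count, recall that $\Sigma_\Phi\cong C_3$ has $3^2=9$ positive roots (\cite[p.~685]{K}); these are exactly the positive roots of $F_4$ with vanishing $\lambda$-coefficient (the coefficient $b_4$ of $\beta_4=\lambda$). Since $F_4$ has $24$ positive roots, the recipe yields $|I_\Phi^\lambda|=24-9-N$, where $N$ counts the positive roots of $F_4$ with $b_4\geq2$. Because every positive root is dominated coordinatewise by the highest root, one has $b_4\leq2$ always, and scanning~\cite[p.~691]{K} shows that $b_4=2$ occurs only for the highest root of $F_4$; hence $N=1$ and $|I_\Phi^\lambda|=14$. As a consistency check I would also run a length count: of the $24$ positive roots, $12$ are long and $12$ are short, the $9$ roots of $C_3$ contribute $3$ long and $6$ short roots of $F_4$, and the displayed string consists of $8$ long and $6$ short roots, leaving precisely one long root (the highest root) with $b_4\geq2$.

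It then remains to confirm that each of the $14$ roots written in the statement really is a positive root of $F_4$ with $b_4=1$; this is a direct line-by-line comparison with~\cite[p.~691]{K}. Having $\lambda$-coefficient one, they all lie in $I_\Phi^\lambda$, and since they number $14=|I_\Phi^\lambda|$ they constitute the whole string.

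The genuinely new point, absent from the simply laced $E$-type cases, is the length statement, because $F_4$ has two root lengths in squared-norm ratio $2:1$. Here $\lambda=(0\,0\,0\,1)$ is long, and the assertion is that the single-parenthesis roots are the long ones (squared norm $|\lambda|^2$) and the double-parenthesis roots the short ones (squared norm $|\lambda|^2/2$). I would verify this either by reading the two lengths directly off Knapp's list, or self-containedly by evaluating $|\gamma|^2$ for each displayed $\gamma$ from the Cartan integers through the identity $|\mu+\nu|^2=|\mu|^2+|\nu|^2+|\nu|^2A_{\nu,\mu}$, exactly as in the classical propositions. The main obstacle is precisely this length bookkeeping: one must classify all $14$ roots by length and check that the resulting $8/6$ long/short split matches both the parenthesization in the statement and the global count of long and short roots of $F_4$ invoked in the counting step.
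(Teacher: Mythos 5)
Your proposal is correct and follows essentially the same route as the paper's proof: the counting recipe $|I_\Phi^\lambda| = 24 - 9 - 1 = 14$ (the single excluded root being the highest root of $F_4$, the only positive root with $\lambda$-coefficient $2$), a line-by-line verification against Knapp's list of positive $F_4$ roots that the fourteen displayed roots have $\lambda$-coefficient one, and the length classification via Cartan integers. Your extra touches --- justifying $b_4\leq 2$ by coordinatewise domination by the highest root, and the $8$ long/$6$ short consistency check against the $12+12$ split of $F_4$ and the $3+6$ split of $C_3$ --- are sound refinements of the same argument rather than a different approach.
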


\begin{proof}
The number of roots in the $\Phi$-string of $\lambda$ will be: the number of positive roots of the root system $\Sigma_{\lambda, \Phi} \cong F_4$, minus the number of positive roots of $\Sigma_\Phi \cong C_3$, and minus the number of (positive) roots of $\Sigma_{\lambda, \Phi}$ whose coefficient corresponding to $\lambda$ when expressed with respect to $\Pi_{\lambda, \Phi}$ is greater than one. According to~\cite[p.~691]{K}, we have that $|I_\Phi^\lambda| = 24 - 9 -1 =14$.

Now, on the one hand, one can check that all the elements in the statement of this result are positive roots in an $F_4$ root system (see~\cite[p.~691]{K}). On the other hand, they all belong to $I_\Phi^\lambda$ as their coefficient corresponding to $\lambda$ with respect to $\Pi_{\lambda, \Phi}$ is one. The claim concerning the lengths follows from a straightforward computation using the Cartan~integers.
\end{proof}

\section{Non-connected case}\label{section:non:connected:phi}

So far, we have been analyzing strings for connected subsets of the set of simple roots. In this section, we deal with a proper non-necessarily connected subset $\Phi$ of $\Pi$ and generalize Proposition~\ref{proposition:root:minimum:level}.

\begin{proposition}\label{proposition:phi:non-connected}
Let $\Phi_1, \dots, \Phi_n$ be connected and mutually orthogonal subsets of the set $\Pi$ of simple roots, put $\Phi = \Phi_1 \cup \dots \cup \Phi_n$, with $n \geq 1$, and assume that $\Phi$ is a proper subset of $\Pi$. Let $\lambda \in \Sigma^{\Phi}$ be the root of minimum level in its $\Phi$-string. Then:
\begin{enumerate}[{\rm (i)}]
\item $I_\Phi^\lambda = \{ \lambda + \sum_{k = 1}^n  \gamma_k \, : \, \gamma_k \in \spann \Phi_k \, \, \, \text{and} \, \, \, \lambda + \gamma_k \in I_{\Phi_k}^\lambda \subset \Sigma^+,  \, \text{for all} \, \, \, k = 1, \dots, n    \}$. \label{proposition:phi:non-connected:1}
\item The root $\gamma \in \Sigma^\Phi$ is of minimum level in its $\Phi$-string if and only if it is of minimum level in its $\Phi_k$-string, for each $k \in \{1, \dots, n\}$. \label{proposition:phi:non-connected:2}
\end{enumerate}
\end{proposition}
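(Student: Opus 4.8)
The plan is to obtain (ii) directly from the gaplessness result and to prove the product formula (i) by reducing, via induction on $n$, to the case of two mutually orthogonal blocks. I would dispose of (ii) first, since it does not even use (i). For any proper $\Psi\subseteq\Pi$ and any $\gamma\in\Sigma^\Psi$, Proposition~\ref{proposition:string:simple:rest} says that $\gamma$ fails to be of minimum level in its $\Psi$-string exactly when $\gamma-\beta\in\Sigma$ for some $\beta\in\Psi$ (such a $\beta$ produces a strictly lower element of the string, and conversely non-minimality produces such a $\beta$). Applying this with $\Psi=\Phi$ and with $\Psi=\Phi_k$, and using that $\Phi=\Phi_1\cup\cdots\cup\Phi_n$ is a disjoint union, the statement ``there is no $\beta\in\Phi$ with $\gamma-\beta\in\Sigma$'' is literally the conjunction over $k$ of ``there is no $\beta\in\Phi_k$ with $\gamma-\beta\in\Sigma$''. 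Hence $\gamma$ is of minimum level in its $\Phi$-string if and only if it is so in each $\Phi_k$-string, which is (ii).

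For (i) I would induct on $n$, the inductive step splitting $\Phi=\Phi_1\cup\Phi'$ with $\Phi'=\Phi_2\cup\cdots\cup\Phi_n$ orthogonal to $\Phi_1$. As $\Phi_1,\Phi'\subseteq\Phi$, the root $\lambda$ stays of minimum level in its $\Phi_1$- and $\Phi'$-strings, and the induction hypothesis applied to $\Phi'$ reduces everything to the two-block statement: for orthogonal $\Psi_0,\Psi_1\subseteq\Phi$ and $\delta_i\in\spann\Psi_i$,
\[
\lambda+\delta_0+\delta_1\in\Sigma \iff \lambda+\delta_0\in\Sigma \ \text{ and } \ \lambda+\delta_1\in\Sigma .
\]
Here positivity and membership in the respective strings are automatic: a root $\lambda+\delta_i$ has $\lambda$-coefficient $1$ with respect to the simple system $\Pi_{\lambda,\Phi_i}$ of Proposition~\ref{proposition:root:system:lambda:phi}, hence lies in $I_{\Phi_i}^\lambda\subset\Sigma^+$ by Proposition~\ref{proposition:simple:basis}, and its $\Psi_i$-coefficients are non-negative by Proposition~\ref{proposition:root:system:lambda:phi}~(iii). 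Throughout, Lemma~\ref{lemma:sum:orthogonal:subsets} is what guarantees that no ``diagonal'' root mixing the two blocks can occur, so that the splitting $\gamma-\lambda=\delta_0+\delta_1$ genuinely respects $\spann\Psi_0\oplus\spann\Psi_1$.

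The heart of the two-block statement is that translating the base point of a $\Psi_1$-string by a vector orthogonal to $\Psi_1$ leaves the relevant Cartan integers unchanged: writing $\mu=\lambda+\delta_0$, the orthogonality $\delta_0\in\spann\Psi_0\perp\Psi_1$ gives $A_{\alpha,\mu+\eta}=A_{\alpha,\lambda+\eta}$ for all $\alpha\in\Psi_1$ and $\eta\in\spann\Psi_1$. One first checks that $\mu$ is again of minimum level in its $\Psi_1$-string (subtracting a simple root of $\Psi_1$ would create a negative coefficient, contradicting Proposition~\ref{proposition:root:system:lambda:phi}~(iii)). For the backward implication I would build $\delta_1$ onto $\mu$ by induction on its $\Psi_1$-level: peel a simple root $\alpha\in\Psi_1$ off $\lambda+\delta_1$ via Proposition~\ref{proposition:string:simple:rest}, get $\mu+(\delta_1-\alpha)\in\Sigma$ by induction, and re-attach $\alpha$ with Proposition~\ref{proposition:strings}~(iv); the governing Cartan integer $A_{\alpha,\mu+\delta_1-\alpha}=A_{\alpha,\lambda+\delta_1-\alpha}$ coincides with the one controlling the same step in the already-determined string of $\lambda$. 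The forward implication is symmetric: from $\gamma\in\Sigma$ apply gaplessness to find $\beta\in\Phi$ with $\gamma-\beta\in\Sigma$, extract one block immediately by induction and recover the other by the same re-attachment. Since the two strings obey the identical Cartan-integer recursion and each has a unique minimal element (Proposition~\ref{proposition:root:system:lambda:phi}~(ii)), the identity $\eta\mapsto\eta$ becomes a bijection between $\{\eta:\lambda+\eta\in\Sigma\}$ and $\{\eta:\mu+\eta\in\Sigma\}$, which is exactly what is claimed.

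The main obstacle is this re-attachment in multiply-laced situations: $\mu=\lambda+\delta_0$ and $\lambda$ may have different lengths (as in the $B$, $C$ and $F_4$ strings of Sections~\ref{section:classical:root:system}--\ref{section:exceptional:root:system}), so $A_{\mu,\alpha}\neq A_{\lambda,\alpha}$ and the Dynkin diagrams of $\Sigma_{\mu,\Psi_1}$ and $\Sigma_{\lambda,\Psi_1}$ need not coincide; moreover the relevant integer $A_{\alpha,\lambda+\delta_1-\alpha}$ can fail to be strictly negative, in which case one must invoke the full gapless string (Proposition~\ref{proposition:strings}~(v)) rather than (iv) alone. What rescues the argument is that the product structure is dictated by the integers $A_{\alpha,\cdot}$ with $\alpha\in\Psi_1$, which \emph{are} insensitive to an orthogonal shift, together with the fact that at most three blocks can be active simultaneously (Corollary~\ref{corollary:three:simple:roots}); this confines the remaining verification to the short list of star-shaped diagrams already treated case-by-case in the preceding sections.
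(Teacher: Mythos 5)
Your proof is correct, and in two respects it takes a genuinely different route from the paper's. For item (ii) you bypass item (i) altogether: since every element of the $\Psi$-string of a root in $\Sigma^{\Psi}$ is a positive root, Proposition~\ref{proposition:string:simple:rest} together with its trivial converse characterizes non-minimality by the existence of $\beta\in\Psi$ with $\gamma-\beta\in\Sigma$, and this condition splits blockwise over the disjoint union $\Phi=\Phi_1\cup\dots\cup\Phi_n$; the paper instead derives (ii) as a consequence of (i), so your argument is shorter and logically independent. For the inclusion ``$\subset$'' of (i) you and the paper do essentially the same thing (peel a simple root by gaplessness, re-attach using that $A_{\alpha,\cdot}$ is invariant under an orthogonal shift). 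For ``$\supset$'' you diverge: the paper first reduces, via Corollary~\ref{corollary:three:simple:roots}, to at least two non-trivial blocks, interchanges indices $1$ and $2$ so that all roots of $\Sigma_{\lambda,\Phi_1}$ have the same length, deduces that $\{\lambda\}\cup(\Phi\backslash\Phi_1)$ and $\{\lambda+\gamma_1\}\cup(\Phi\backslash\Phi_1)$ have identical Cartan matrices in both directions ($A_{\nu,\lambda}=A_{\nu,\lambda+\gamma_1}$ and $A_{\lambda,\nu}=A_{\lambda+\gamma_1,\nu}$), and then transports $\lambda+\sum_{k\geq 2}\gamma_k$ through the resulting identification of root systems. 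You instead transport string by string using only the one-sided integers $A_{\alpha,\cdot}$ with $\alpha\in\Psi_1$, which are insensitive to the orthogonal shift regardless of lengths. That buys uniformity: no same-length normalization, no index swap, no inspection of diagram configurations --- a real advantage, since, as you observe, $|\mu|\neq|\lambda|$ can occur and then the two-sided Cartan data genuinely differ.

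Two adjustments would make your writeup airtight. First, when $A_{\alpha,\lambda+\eta-\alpha}\geq 0$, Proposition~\ref{proposition:strings}~(v) only closes the re-attachment if the downward extent $p$ of the $\alpha$-string through $\mu+\eta-\alpha$ equals that through $\lambda+\eta-\alpha$; this is exactly what your two-way bijection supplies, so the induction hypothesis must be stated as the equivalence $\lambda+\eta\in\Sigma \iff \mu+\eta\in\Sigma$ for all smaller $\Psi_1$-levels, not merely as the forward implication. (Alternatively, the paper's dichotomy in the ``$\subset$'' step works verbatim: excluding $G_2$, either $A_{\alpha,\gamma-\alpha}\leq -1$, or $\gamma-2\alpha\in\Sigma$ with $A_{\alpha,\gamma-2\alpha}=-2$.) Second, delete the closing appeal to Corollary~\ref{corollary:three:simple:roots} and to the ``star-shaped diagrams already treated case-by-case'': those case analyses concern connected $\Phi$ only, so as a fallback they would be close to circular here, and once the two-way induction is in place they are unnecessary. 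Your auxiliary checks --- $\mu=\lambda+\delta_0$ minimal in its $\Psi_1$-string via Proposition~\ref{proposition:root:system:lambda:phi}~(iii), positivity via Proposition~\ref{proposition:simple:basis}, and blockwise splitting via Lemma~\ref{lemma:sum:orthogonal:subsets} --- are all correctly invoked, and they coincide with the paper's.
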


\begin{proof}
(\ref{proposition:phi:non-connected:1}): Let $\gamma = \lambda + \sum_{k = 1}^n  \gamma_k$ be a generic root in the $\Phi$-string of $\lambda$, with $\gamma_k$ in $\spann \Phi_k$ for each $k \in \{1, \dots, n\}$. We start by proving the relation ``$\subset$'' in the equality~(\ref{proposition:phi:non-connected:1}) by induction on the level of $\gamma$ with respect to the simple system $\Pi_{\lambda, \Phi}$. If $l(\gamma) = 1$, then $\gamma = \lambda$ and the result is trivial. Assume that our claim holds for roots in the $\Phi$-string of $\lambda$ with less level than $n$ with respect to $\Pi_{\lambda, \Phi}$, and put $l(\gamma)=n > 1$. From Proposition~\ref{proposition:string:simple:rest}, there exists $\alpha_l \in \Phi_l \subset \Phi$, for a certain $l \in \{1, \dots ,n \}$, such that $\gamma - \alpha_l$ is a root in the $\Phi$-string of $\lambda$ (of level $n-1$ with respect to $\Pi_{\lambda, \Phi}$). By applying the induction hypothesis we deduce that $\lambda + \gamma_l - \alpha_l$ and $\lambda + \gamma_{k}$ are roots, for each $k$ different from $l$ satisfying $1 \leq k \leq n$. We just need to see that $\lambda + \gamma_l$ is a root. Using that $\gamma - \alpha_l$ is a root, the fact that we can and will assume  that $\Sigma$ is not a $G_2$ root system (it just contains two simple roots and this would lead to a connected $\Phi$), and that $\gamma$ and $\alpha_l$ are non-proportional, from Proposition~\ref{proposition:strings}~(\ref{proposition:strings:5}) we deduce that either $A_{\alpha_l, \gamma-\alpha_l} \leq -1$, or $\gamma -2 \alpha_l$ is also a root and $A_{\alpha_l, \gamma- 2\alpha_l} = -2$. Using induction again for $\gamma -2\alpha_l$ we deduce that $\lambda + \gamma_l - 2 \alpha_l$ is a root. Now, since the $\Phi_i$'s are mutually orthogonal, we have that either $A_{\alpha_l, \lambda+\gamma_l-\alpha_l}=A_{\alpha_l, \gamma-\alpha_l} \leq -1$, or $A_{\alpha_l, \lambda + \gamma_l - 2 \alpha_l } = A_{\alpha_l, \gamma - 2\alpha_l} = -2$. In both cases, we get that $\lambda + \gamma_l$ is a root by means of Proposition~\ref{proposition:strings}~(v). 

Now, let us prove the relation ``$\supset$'' in the equality~(\ref{proposition:phi:non-connected:1}). In order to do so, we will proceed by induction on the number $n$ of subsets in the decomposition of $\Phi$ into mutually orthogonal connected subsets of $\Pi$. If $n=1$ everything is trivial. Moreover, we can and will assume that the $\Phi_k$-string of $\lambda$ is not trivial for at least $k \in \{1, 2\}$. Otherwise, the $\Phi$-string of $\lambda$ will coincide with the $\Phi_j$-string of $\lambda$, for some $j \in \{1, \dots, n\}$ (see Corollary~\ref{corollary:three:simple:roots}), and since $\Phi_j$ is connected, then our claims are trivial.

Taking into account the above considerations, assume that $\lambda + \gamma_k$ is a root in the $\Phi_k$-string of $\lambda$, with $\gamma_k$ in $\spann \Phi_k$, for each $k \in \{1, \dots, n\}$. Note $\lambda + \gamma_k$ is of minimum level in its $\Phi \backslash \Phi_k$-string and in particular in its $\Psi$-string, for any $\Psi \subset \Phi \backslash \Phi_k$, for any $ k \in \{1, \dots, n\}$. Otherwise, there exists $\alpha \in \Phi \backslash \Phi_k$ such that $\lambda + \gamma_k - \alpha$ is a root by means of Proposition~\ref{proposition:string:simple:rest}. But this contradicts Proposition~\ref{proposition:simple:basis} for the simple system $\Pi_{\lambda, \Phi}$, as the integer corresponding to $\alpha$ is negative. 

Looking at the different configurations of Dynkin diagrams~\cite{K}, and after interchanging indices 1 and 2 if necessary, we can and will assume that all the roots in $\Sigma_{\lambda, \Phi_1}$ have the same length. Using this and the fact that the $\Phi_i$'s are mutually orthogonal, we get $A_{\nu, \lambda} = A_{\nu, \lambda + \gamma_1}$ and $A_{\lambda, \nu} = A_{\lambda + \gamma_1, \nu}$, for any $\nu \in \Phi \backslash \Phi_1$. Thus, the root systems $\Sigma_{\lambda + \gamma_1, \Phi \backslash \Phi_1}$ and $\Sigma_{\lambda, \Phi \backslash \Phi_1}$ are of the same type, have the same type of Dynkin diagram, and with $\lambda$ and $\lambda + \gamma_1$ playing equivalent roles in them. Now, by induction hypothesis, we have that $\lambda + \sum_{k=2}^{n} \gamma_k$ is a root in the $\Phi$-string of $\lambda$. But then so it is $\lambda +\gamma_1 + \sum_{k=2}^{n} \gamma_k$.

(\ref{proposition:phi:non-connected:2}): One of the implications is trivial. Now, assume that $\gamma = \lambda + \sum_{k = 1}^n  \gamma_k$ is not of minimum level in its $\Phi$-string, with $\gamma_k$ in $\spann \Phi_k$ for each $k \in \{1, \dots, n\}$. Then, there must exist $\alpha \in \Phi_l \subset \Phi$ such that $\gamma - \alpha$ is a root, for some $l \in \{1, \dots, n\}$, by virtue of Proposition~\ref{proposition:string:simple:rest}. From~(\ref{proposition:phi:non-connected:1}), we get that $\lambda + \gamma_l - \alpha$ is a root, and hence $\lambda + \gamma_l$ is not of minimum level in its $\Phi_l$-string.
\end{proof}

Now, we can generalize Proposition~\ref{proposition:root:minimum:level} to a proper non-necessarily connected subset $\Phi$ of $\Pi$. Indeed, from Proposition~\ref{proposition:root:minimum:level} and Proposition~\ref{proposition:phi:non-connected}, we deduce the following

\begin{corollary}
Let $\Phi_1, \dots, \Phi_n$ be connected and mutually orthogonal subsets of the set $\Pi$ of simple roots, and put $\Phi = \Phi_1 \cup \dots \cup \Phi_n$, with $n \geq 1$. Assume that $\Phi$ is a proper subset of $\Pi$. Let $\lambda \in \Sigma^\Phi$ and $I \subset \{1, \dots, n\}$ such that the $\Phi_k$-string of $\lambda$ is not trivial if with $k \in I$ and trivial otherwise. If $\lambda$ is of minimum level in its $\Phi$-string, then there exists a root $\alpha_k \in \Phi_k$ such that $A_{\alpha_k, \lambda} < 0$ and $A_{\beta, \lambda} = 0$, for each $\beta \in \Phi_k \backslash \{ \alpha_k\}$, and for each $k \in I$. Moreover, if $|\alpha| \geq |\nu|$ for any $(\alpha, \nu) \in \Phi_k \times I_\Phi^\lambda$, $k \in I$, the converse is true. 
\end{corollary}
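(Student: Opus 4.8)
The plan is to reduce the statement entirely to the connected case established in Proposition~\ref{proposition:root:minimum:level} and then glue the connected components together using the equivalence in Proposition~\ref{proposition:phi:non-connected}~(\ref{proposition:phi:non-connected:2}). No new computation should be required; everything follows by applying the connected result componentwise.

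First I would prove the forward implication. Assuming $\lambda$ is of minimum level in its $\Phi$-string, Proposition~\ref{proposition:phi:non-connected}~(\ref{proposition:phi:non-connected:2}) shows that $\lambda$ is then of minimum level in its $\Phi_k$-string for every $k \in \{1, \dots, n\}$. For each $k \in I$ the $\Phi_k$-string of $\lambda$ is non-trivial and $\Phi_k$ is connected, so the first implication of Proposition~\ref{proposition:root:minimum:level}, applied to the pair $(\Phi_k, \lambda)$, yields a root $\alpha_k \in \Phi_k$ with $A_{\alpha_k, \lambda} < 0$ and $A_{\beta, \lambda} = 0$ for all $\beta \in \Phi_k \setminus \{\alpha_k\}$. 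This is exactly the asserted conclusion.

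For the converse I would argue as follows. The key observation is the inclusion $I_{\Phi_k}^\lambda \subset I_\Phi^\lambda$ for each $k$, valid because any root of the form $\lambda + \gamma_k$ with $\gamma_k \in \spann \Phi_k$ is in particular a root of the $\Phi$-string of $\lambda$. Consequently, the global length hypothesis $|\nu| \leq |\alpha|$ for all $(\alpha, \nu) \in \Phi_k \times I_\Phi^\lambda$ restricts to the local length hypothesis $|\nu| \leq |\alpha|$ for all $(\alpha, \nu) \in \Phi_k \times I_{\Phi_k}^\lambda$ demanded by the converse in Proposition~\ref{proposition:root:minimum:level}. Combining this with the Cartan-integer conditions assumed for each $k \in I$, the converse of Proposition~\ref{proposition:root:minimum:level} gives that $\lambda$ is of minimum level in its $\Phi_k$-string for every $k \in I$. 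For $k \notin I$ the $\Phi_k$-string of $\lambda$ is trivial, so $\lambda$ is automatically of minimum level there. A final application of Proposition~\ref{proposition:phi:non-connected}~(\ref{proposition:phi:non-connected:2}) then yields that $\lambda$ is of minimum level in its $\Phi$-string, completing the proof.

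I expect the only delicate point to be the bookkeeping around the length condition: one must verify that the hypothesis stated globally in terms of $I_\Phi^\lambda$ is precisely what is needed to invoke the converse of Proposition~\ref{proposition:root:minimum:level} on each connected component, which rests entirely on the inclusion $I_{\Phi_k}^\lambda \subset I_\Phi^\lambda$. Once that is in place, the argument is a routine componentwise transfer glued by the equivalence in Proposition~\ref{proposition:phi:non-connected}~(\ref{proposition:phi:non-connected:2}).
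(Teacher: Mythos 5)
Your proof is correct and coincides with the paper's own derivation: the corollary is stated there without a separate proof, being deduced exactly as you do by applying Proposition~\ref{proposition:root:minimum:level} componentwise to each connected $\Phi_k$ with $k \in I$ (minimality being automatic for the trivial strings with $k \notin I$) and gluing via the equivalence in Proposition~\ref{proposition:phi:non-connected}~(\ref{proposition:phi:non-connected:2}). Your key bookkeeping point, that the inclusion $I_{\Phi_k}^\lambda \subset I_\Phi^\lambda$ transfers the global length hypothesis to the local one required by the converse of Proposition~\ref{proposition:root:minimum:level}, is precisely what is needed.
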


We finish this section by proving the main result of the paper.

\begin{proof}[Proof of the Main Theorem]
Item~(\ref{Main:Theorem:1}) from the Main Theorem follows directly from the fact that $\Phi$ constitutes a simple system for the root subsystem $\Sigma_\Phi$ of $\Sigma$ that it spans. Item~(\ref{Main:Theorem:2}) follows from Proposition~\ref{proposition:root:system:lambda:phi}, and item~(\ref{Main:Theorem:3}) from Proposition~\ref{proposition:string:simple:rest}. Now, item~(\ref{Main:Theorem:4}), and hence Table~\ref{table:classical} and Table~\ref{table:exceptional}), they all follow from using together any single result in  bothe Section~\ref{section:classical:root:system} and Section~\ref{section:exceptional:root:system}. Finally, item~(\ref{Main:Theorem:5}) from the Main Theorem corresponds to Proposition~\ref{proposition:phi:non-connected}~(\ref{proposition:phi:non-connected:1}).
\end{proof}

\section{The diagram of a string}\label{section:diagrams}
The main aim of this last section of the paper is to associate an oriented graph to each $\Phi$-string, which allows us to understand better its configuration. Indeed, we typically expose mathematical knowledge in the opposite order of its original discovery. This is the case here, as we first started the investigation of $\Phi$-strings by making extensive use of these oriented graphs that we now present. 

They are constructed in the following way. Let $\Phi$ be a subset of $\Pi$ and $\lambda \in \Sigma^+$ be a root not spanned by $\Phi$ of minimum level in its $\Phi$-string. Recall that we write $I_\Phi^\lambda$ for the set of roots in the $\Phi$-string of $\lambda$. We draw a node in the graph corresponding to each root in $I_\Phi^\lambda$. Now, the nodes corresponding to the roots $\nu_1$, $\nu_2 \in I_\Phi^\lambda$ are connected by a single arrow pointing to $\nu_2$ and labeled as $\alpha$, if and only if $\nu_2 - \nu_1 = \alpha$ for some $\alpha \in \Phi$.

In order to illustrate this, in the following lines we include the oriented graph corresponding to each possible pair $(\Sigma_\Phi, \Sigma_{\lambda, \Phi})$ or equivalently, for each row in Table~\ref{table:classical} and in Table~\ref{table:exceptional}. However, we select a particular $n$ in each case for the sake of simplicity.

\subsection{Proposition~\ref{proposition:a:string:type}. $(\Sigma_\Phi, \Sigma_{\lambda, \Phi}) \cong (A_n, A_{n+1})$, $(A_n, B_{n+1})$ or $(A_n, BC_{n+1})$, with $n = 6$.}

\begin{equation*}
\begin{tikzpicture}[scale=1.4]
	\draw (0, 0) circle (0.1);
	\draw (1, 0) circle (0.1);
	\draw (2, 0) circle (0.1);
	\draw (3, 0) circle (0.1);
	\draw (4, 0) circle (0.1);
	\draw (5, 0) circle (0.1);
	\draw (6, 0) circle (0.1);
	\draw[->] (0.1, 0.) -- (0.5, 0.);
	\draw (0.5, 0.) -- (0.9, 0.);
	\draw (0.5, -0.175) node {$\alpha _1$};
	\draw[->] (1.1, 0.) -- (1.5, 0.);
	\draw (1.5, 0.) -- (1.9, 0.);
	\draw (1.5, -0.175) node {$\alpha _2$};
	\draw[->] (2.1, 0.) -- (2.5, 0.);
	\draw (2.5, 0.) -- (2.9, 0.);
	\draw (2.5, -0.175) node {$\alpha _3$};
	\draw[->] (3.1, 0.) -- (3.5, 0.);
	\draw (3.5, 0.) -- (3.9, 0.);
	\draw (3.5, -0.175) node {$\alpha _4$};
	\draw[->] (4.1, 0.) -- (4.5, 0.);
	\draw (4.5, 0.) -- (4.9, 0.);
	\draw (4.5, -0.175) node {$\alpha _5$};
	\draw[->] (5.1, 0.) -- (5.5, 0.);
	\draw (5.5, 0.) -- (5.9, 0.);
	\draw (5.5, -0.175) node {$\alpha _6$};
	\draw (0, -0.25) node {$\lambda$};
\end{tikzpicture}
\end{equation*}

\subsection{Proposition~\ref{proposition:c:string:type}. $(\Sigma_\Phi, \Sigma_{\lambda, \Phi}) \cong (A_n, C_{n+1})$ with $n=5$.}
\begin{equation*}
\begin{tikzpicture}[scale=1.4]
	\draw (0, 0) circle (0.1);
	\draw (1, 0) circle (0.1);
	\draw (2, 0) circle (0.1);
	\draw (3, 0) circle (0.1);
	\draw (1, 1) circle (0.1);
	\draw (2, 1) circle (0.1);
	\draw (3, 1) circle (0.1);
	\draw (2, 2) circle (0.1);
	\draw (3, 2) circle (0.1);
	\draw (3, 3) circle (0.1);
	\draw (4, 0) circle (0.1);
	\draw (4, 1) circle (0.1);
	\draw (4, 2) circle (0.1);
	\draw (4, 3) circle (0.1);
	\draw (4, 4) circle (0.1);
	\draw (5, 0) circle (0.1);
	\draw (5, 1) circle (0.1);
	\draw (5, 2) circle (0.1);
	\draw (5, 4) circle (0.1);
	\draw (5, 5) circle (0.1);
	\draw (5, 3) circle (0.1);
	\draw[->] (0.1, 0.) -- (0.5, 0.);
	\draw (0.5, 0.) -- (0.9, 0.);
	\draw (0.5, -0.175) node {$\alpha _1$};
	\draw[->] (1.1, 0.) -- (1.5, 0.);
	\draw (1.5, 0.) -- (1.9, 0.);
	\draw (1.5, -0.175) node {$\alpha _2$};
	\draw[->] (2.1, 0.) -- (2.5, 0.);
	\draw (2.5, 0.) -- (2.9, 0.);
	\draw (2.5, -0.175) node {$\alpha _3$};
	\draw[->] (3.1, 0.) -- (3.5, 0.);
	\draw (3.5, 0.) -- (3.9, 0.);
	\draw (3.5, -0.175) node {$\alpha _4$};
	\draw[->] (4.1, 0.) -- (4.5, 0.);
	\draw (4.5, 0.) -- (4.9, 0.);
	\draw (4.5, -0.175) node {$\alpha _5$};
	\draw[->] (1.1, 1.) -- (1.5, 1.);
	\draw (1.5, 1.) -- (1.9, 1.);
	\draw (1.5, 0.825) node {$\alpha _2$};
	\draw[->] (2.1, 1.) -- (2.5, 1.);
	\draw (2.5, 1.) -- (2.9, 1.);
	\draw (2.5, 0.825) node {$\alpha _3$};
	\draw[->] (4.1, 0.) -- (4.5, 0.);
	\draw (4.5, 0.) -- (4.9, 0.);
	\draw (4.5, -0.175) node {$\alpha _5$};
	\draw[->] (1., 0.1) -- (1., 0.5);
	\draw (1., 0.5) -- (1., 0.9);
	\draw (0.8, 0.5) node {$\alpha _1$};
	\draw[->] (2., 0.1) -- (2., 0.5);
	\draw (2., 0.5) -- (2., 0.9);
	\draw (1.8, 0.5) node {$\alpha _1$};
	\draw[->] (3., 0.1) -- (3., 0.5);
	\draw (3., 0.5) -- (3., 0.9);
	\draw (2.8, 0.5) node {$\alpha _1$};
	\draw[->] (2., 1.1) -- (2., 1.5);
	\draw (2., 1.5) -- (2., 1.9);
	\draw (1.8, 1.5) node {$\alpha _2$};
	\draw[->] (3., 1.1) -- (3., 1.5);
	\draw (3., 1.5) -- (3., 1.9);
	\draw (2.8, 1.5) node {$\alpha _2$};
	\draw[->] (2.1, 2.) -- (2.5, 2.);
	\draw (2.5, 2.) -- (2.9, 2.);
	\draw (2.5, 1.825) node {$\alpha _3$};
	\draw[->] (3., 2.1) -- (3., 2.5);
	\draw (3., 2.5) -- (3., 2.9);
	\draw (2.8, 2.5) node {$\alpha _3$};
	\draw[->] (3.1, 1.) -- (3.5, 1.);
	\draw (3.5, 1.) -- (3.9, 1.);
	\draw (3.5, 0.825) node {$\alpha _4$};
	\draw[->] (3.1, 2.) -- (3.5, 2.);
	\draw (3.5, 2.) -- (3.9, 2.);
	\draw (3.5, 1.825) node {$\alpha _4$};
	\draw[->] (4., 0.1) -- (4., 0.5);
	\draw (4., 0.5) -- (4., 0.9);
	\draw (3.8, 0.5) node {$\alpha _1$};
	\draw[->] (4., 1.1) -- (4., 1.5);
	\draw (4., 1.5) -- (4., 1.9);
	\draw (3.8, 1.5) node {$\alpha _2$};
	\draw[->] (4., 2.1) -- (4., 2.5);
	\draw (4., 2.5) -- (4., 2.9);
	\draw (3.8, 2.5) node {$\alpha _3$};
	\draw[->] (4., 3.1) -- (4., 3.5);
	\draw (4., 3.5) -- (4., 3.9);
	\draw (3.8, 3.5) node {$\alpha _4$};
	\draw[->] (3.1, 3.) -- (3.5, 3.);
	\draw (3.5, 3.) -- (3.9, 3.);
	\draw (3.5, 2.825) node {$\alpha _4$};
	\draw[->] (4.1, 1.) -- (4.5, 1.);
	\draw (4.5, 1.) -- (4.9, 1.);
	\draw (4.5, 0.825) node {$\alpha _5$};
	\draw[->] (4.1, 2.) -- (4.5, 2.);
	\draw (4.5, 2.) -- (4.9, 2.);
	\draw (4.5, 1.825) node {$\alpha _5$};
	\draw[->] (4.1, 3.) -- (4.5, 3.);
	\draw (4.5, 3.) -- (4.9, 3.);
	\draw (4.5, 2.825) node {$\alpha _5$};
	\draw[->] (4.1, 4.) -- (4.5, 4.);
	\draw (4.5, 4.) -- (4.9, 4.);
	\draw (4.5, 3.825) node {$\alpha _5$};
	\draw[->] (5., 0.1) -- (5., 0.5);
	\draw (5., 0.5) -- (5., 0.9);
	\draw (4.8, 0.5) node {$\alpha _1$};
	\draw[->] (5., 1.1) -- (5., 1.5);
	\draw (5., 1.5) -- (5., 1.9);
	\draw (4.8, 1.5) node {$\alpha _2$};
	\draw[->] (5., 2.1) -- (5., 2.5);
	\draw (5., 2.5) -- (5., 2.9);
	\draw (4.8, 2.5) node {$\alpha _3$};
	\draw[->] (5., 3.1) -- (5., 3.5);
	\draw (5., 3.5) -- (5., 3.9);
	\draw (4.8, 3.5) node {$\alpha _4$};
	\draw[->] (5., 4.1) -- (5., 4.5);
	\draw (5., 4.5) -- (5., 4.9);
	\draw (4.8, 4.5) node {$\alpha _5$};
	\draw (0, -0.25) node {$\lambda$};
\end{tikzpicture}
\end{equation*}
\subsection{Proposition~\ref{proposition:d:string:type}. $(\Sigma_\Phi, \Sigma_{\lambda, \Phi}) \cong (A_n, D_{n+1})$ with $n = 6$.}

\begin{equation*}
\begin{tikzpicture}[scale=1.4]
	\draw (0, 0) circle (0.1);
	\draw (1, 0) circle (0.1);
	\draw (2, 0) circle (0.1);
	\draw (3, 0) circle (0.1);
	\draw (1, 1) circle (0.1);
	\draw (2, 1) circle (0.1);
	\draw (3, 1) circle (0.1);
	\draw (2, 2) circle (0.1);
	\draw (3, 2) circle (0.1);
	\draw (3, 3) circle (0.1);
	\draw (4, 0) circle (0.1);
	\draw (4, 1) circle (0.1);
	\draw (4, 2) circle (0.1);
	\draw (4, 3) circle (0.1);
	\draw (4, 4) circle (0.1);
	\draw (5, 0) circle (0.1);
	\draw (5, 1) circle (0.1);
	\draw (5, 2) circle (0.1);
	\draw (5, 4) circle (0.1);
	\draw (5, 5) circle (0.1);
	\draw (5, 3) circle (0.1);
	\draw[->] (0.1, 0.) -- (0.5, 0.);
	\draw (0.5, 0.) -- (0.9, 0.);
	\draw (0.5, -0.175) node {$\alpha _2$};
	\draw[->] (1.1, 0.) -- (1.5, 0.);
	\draw (1.5, 0.) -- (1.9, 0.);
	\draw (1.5, -0.175) node {$\alpha _3$};
	\draw[->] (2.1, 0.) -- (2.5, 0.);
	\draw (2.5, 0.) -- (2.9, 0.);
	\draw (2.5, -0.175) node {$\alpha _4$};
	\draw[->] (1.1, 1.) -- (1.5, 1.);
	\draw (1.5, 1.) -- (1.9, 1.);
	\draw (1.5, 0.825) node {$\alpha _3$};
	\draw[->] (1., 0.1) -- (1., 0.5);
	\draw (1., 0.5) -- (1., 0.9);
	\draw (0.8, 0.5) node {$\alpha _1$};
	\draw[->] (2., 0.1) -- (2., 0.5);
	\draw (2., 0.5) -- (2., 0.9);
	\draw (1.8, 0.5) node {$\alpha _1$};
	\draw[->] (3., 0.1) -- (3., 0.5);
	\draw (3., 0.5) -- (3., 0.9);
	\draw (2.8, 0.5) node {$\alpha _1$};
	\draw[->] (2., 1.1) -- (2., 1.5);
	\draw (2., 1.5) -- (2., 1.9);
	\draw (1.8, 1.5) node {$\alpha _2$};
	\draw[->] (2.1, 1.) -- (2.5, 1.);
	\draw (2.5, 1.) -- (2.9, 1.);
	\draw (2.5, 0.825) node {$\alpha _4$};
	\draw[->] (3., 1.1) -- (3., 1.5);
	\draw (3., 1.5) -- (3., 1.9);
	\draw (2.8, 1.5) node {$\alpha _2$};
	\draw[->] (2.1, 2.) -- (2.5, 2.);
	\draw (2.5, 2.) -- (2.9, 2.);
	\draw (2.5, 1.825) node {$\alpha _4$};
	\draw[->] (3., 2.1) -- (3., 2.5);
	\draw (3., 2.5) -- (3., 2.9);
	\draw (2.8, 2.5) node {$\alpha _3$};
	\draw[->] (3.1, 0.) -- (3.5, 0.);
	\draw (3.5, 0.) -- (3.9, 0.);
	\draw (3.5, -0.175) node {$\alpha _5$};
	\draw[->] (3.1, 1.) -- (3.5, 1.);
	\draw (3.5, 1.) -- (3.9, 1.);
	\draw (3.5, 0.825) node {$\alpha _5$};
	\draw[->] (3.1, 2.) -- (3.5, 2.);
	\draw (3.5, 2.) -- (3.9, 2.);
	\draw (3.5, 1.825) node {$\alpha _5$};
	\draw[->] (4., 0.1) -- (4., 0.5);
	\draw (4., 0.5) -- (4., 0.9);
	\draw (3.8, 0.5) node {$\alpha _1$};
	\draw[->] (4., 1.1) -- (4., 1.5);
	\draw (4., 1.5) -- (4., 1.9);
	\draw (3.8, 1.5) node {$\alpha _2$};
	\draw[->] (4., 2.1) -- (4., 2.5);
	\draw (4., 2.5) -- (4., 2.9);
	\draw (3.8, 2.5) node {$\alpha _3$};
	\draw[->] (4., 3.1) -- (4., 3.5);
	\draw (4., 3.5) -- (4., 3.9);
	\draw (3.8, 3.5) node {$\alpha _4$};
	\draw[->] (3.1, 3.) -- (3.5, 3.);
	\draw (3.5, 3.) -- (3.9, 3.);
	\draw (3.5, 2.825) node {$\alpha _5$};
	\draw[->] (4.1, 0.) -- (4.5, 0.);
	\draw (4.5, 0.) -- (4.9, 0.);
	\draw (4.5, -0.175) node {$\alpha _6$};
	\draw[->] (4.1, 1.) -- (4.5, 1.);
	\draw (4.5, 1.) -- (4.9, 1.);
	\draw (4.5, 0.825) node {$\alpha _6$};
	\draw[->] (4.1, 2.) -- (4.5, 2.);
	\draw (4.5, 2.) -- (4.9, 2.);
	\draw (4.5, 1.825) node {$\alpha _6$};
	\draw[->] (4.1, 3.) -- (4.5, 3.);
	\draw (4.5, 3.) -- (4.9, 3.);
	\draw (4.5, 2.825) node {$\alpha _6$};
	\draw[->] (4.1, 4.) -- (4.5, 4.);
	\draw (4.5, 4.) -- (4.9, 4.);
	\draw (4.5, 3.825) node {$\alpha _6$};
	\draw[->] (5., 0.1) -- (5., 0.5);
	\draw (5., 0.5) -- (5., 0.9);
	\draw (4.8, 0.5) node {$\alpha _1$};
	\draw[->] (5., 1.1) -- (5., 1.5);
	\draw (5., 1.5) -- (5., 1.9);
	\draw (4.8, 1.5) node {$\alpha _2$};
	\draw[->] (5., 2.1) -- (5., 2.5);
	\draw (5., 2.5) -- (5., 2.9);
	\draw (4.8, 2.5) node {$\alpha _3$};
	\draw[->] (5., 3.1) -- (5., 3.5);
	\draw (5., 3.5) -- (5., 3.9);
	\draw (4.8, 3.5) node {$\alpha _4$};
	\draw[->] (5., 4.1) -- (5., 4.5);
	\draw (5., 4.5) -- (5., 4.9);
	\draw (4.8, 4.5) node {$\alpha _5$};
	\draw (0, -0.25) node {$\lambda$};
\end{tikzpicture}
\end{equation*}

\subsection{Proposition~\ref{proposition:b:string:type}. $(\Sigma_\Phi, \Sigma_{\lambda, \Phi}) \cong (B_n, B_{n+1})$ or $(BC_n, BC_{n+1})$ with $n = 5$.}

\begin{equation*}
\begin{tikzpicture}[scale=1.4]
	\draw (0, 0) circle (0.1);
	\draw (1, 0) circle (0.1);
	\draw (2, 0) circle (0.1);
	\draw (3, 0) circle (0.1);
	\draw (4, 0) circle (0.1);
	\draw (5, 0) circle (0.1);
	\draw (6, 0) circle (0.1);
	\draw (7, 0) circle (0.1);
	\draw (8, 0) circle (0.1);
	\draw (9, 0) circle (0.1);
	\draw (10, 0) circle (0.1);
	\draw[->] (0.1, 0.) -- (0.5, 0.);
	\draw (0.5, 0.) -- (0.9, 0.);
	\draw (0.5, -0.175) node {$\alpha _1$};
	\draw[->] (1.1, 0.) -- (1.5, 0.);
	\draw (1.5, 0.) -- (1.9, 0.);
	\draw (1.5, -0.175) node {$\alpha _2$};
	\draw[->] (2.1, 0.) -- (2.5, 0.);
	\draw (2.5, 0.) -- (2.9, 0.);
	\draw (2.5, -0.175) node {$\alpha _3$};
	\draw[->] (3.1, 0.) -- (3.5, 0.);
	\draw (3.5, 0.) -- (3.9, 0.);
	\draw (3.5, -0.175) node {$\alpha _4$};
	\draw[->] (4.1, 0.) -- (4.5, 0.);
	\draw (4.5, 0.) -- (4.9, 0.);
	\draw (4.5, -0.175) node {$\alpha _5$};
	\draw[->] (5.1, 0.) -- (5.5, 0.);
	\draw (5.5, 0.) -- (5.9, 0.);
	\draw (5.5, -0.175) node {$\alpha _5$};
	\draw[->] (6.1, 0.) -- (6.5, 0.);
	\draw (6.5, 0.) -- (6.9, 0.);
	\draw (6.5, -0.175) node {$\alpha _4$};
	\draw[->] (7.1, 0.) -- (7.5, 0.);
	\draw (7.5, 0.) -- (7.9, 0.);
	\draw (7.5, -0.175) node {$\alpha _3$};
	\draw[->] (8.1, 0.) -- (8.5, 0.);
	\draw (8.5, 0.) -- (8.9, 0.);
	\draw (8.5, -0.175) node {$\alpha _2$};
	\draw[->] (9.1, 0.) -- (9.5, 0.);
	\draw (9.5, 0.) -- (9.9, 0.);
	\draw (9.5, -0.175) node {$\alpha _1$};
	\draw (0, -0.25) node {$\lambda$};
\end{tikzpicture}
\end{equation*}

\subsection{Proposition~\ref{proposition:c:pure:string:type}. $(\Sigma_\Phi, \Sigma_{\lambda, \Phi}) \cong (C_n, C_{n+1})$ with $n = 5$.}

\begin{equation*}
\begin{tikzpicture}[scale=1.4]
	\draw (0, 0) circle (0.1);
	\draw (1, 0) circle (0.1);
	\draw (2, 0) circle (0.1);
	\draw (3, 0) circle (0.1);
	\draw (4, 0) circle (0.1);
	\draw (5, 0) circle (0.1);
	\draw (6, 0) circle (0.1);
	\draw (7, 0) circle (0.1);
	\draw (8, 0) circle (0.1);
	\draw (9, 0) circle (0.1);
	\draw[->] (0.1, 0.) -- (0.5, 0.);
	\draw (0.5, 0.) -- (0.9, 0.);
	\draw (0.5, -0.175) node {$\alpha _1$};
	\draw[->] (1.1, 0.) -- (1.5, 0.);
	\draw (1.5, 0.) -- (1.9, 0.);
	\draw (1.5, -0.175) node {$\alpha _2$};
	\draw[->] (2.1, 0.) -- (2.5, 0.);
	\draw (2.5, 0.) -- (2.9, 0.);
	\draw (2.5, -0.175) node {$\alpha _3$};
	\draw[->] (3.1, 0.) -- (3.5, 0.);
	\draw (3.5, 0.) -- (3.9, 0.);
	\draw (3.5, -0.175) node {$\alpha _4$};
	\draw[->] (4.1, 0.) -- (4.5, 0.);
	\draw (4.5, 0.) -- (4.9, 0.);
	\draw (4.5, -0.175) node {$\alpha _5$};
	\draw[->] (5.1, 0.) -- (5.5, 0.);
	\draw (5.5, 0.) -- (5.9, 0.);
	\draw (5.5, -0.175) node {$\alpha _4$};
	\draw[->] (6.1, 0.) -- (6.5, 0.);
	\draw (6.5, 0.) -- (6.9, 0.);
	\draw (6.5, -0.175) node {$\alpha _3$};
	\draw[->] (6.1, 0.) -- (6.5, 0.);
	\draw (6.5, 0.) -- (6.9, 0.);
	\draw (6.5, -0.175) node {$\alpha _3$};
	\draw[->] (7.1, 0.) -- (7.5, 0.);
	\draw (7.5, 0.) -- (7.9, 0.);
	\draw (7.5, -0.175) node {$\alpha _2$};
	\draw[->] (8.1, 0.) -- (8.5, 0.);
	\draw (8.5, 0.) -- (8.9, 0.);
	\draw (8.5, -0.175) node {$\alpha _1$};
	\draw (0, -0.25) node {$\lambda$};
\end{tikzpicture}
\end{equation*}

\subsection{Proposition~\ref{proposition:d:pure:string:type}. $(\Sigma_\Phi, \Sigma_{\lambda, \Phi}) \cong (D_n, D_{n+1})$ with $n = 6$.}

\begin{equation*}
\begin{tikzpicture}[scale=1.4]
	\draw (-2, 0) circle (0.1);
	\draw (-1, 0) circle (0.1);
	\draw (0, 0) circle (0.1);
	\draw (1, 0) circle (0.1);
	\draw (2, 0) circle (0.1);
	\draw (3, 0) circle (0.1);
	\draw (2.5, 0.5) circle (0.1);
	\draw (2.5, -0.5) circle (0.1);
	\draw (4, 0) circle (0.1);
	\draw (4, 0) circle (0.1);
	\draw (5, 0) circle (0.1);
	\draw (6, 0) circle (0.1);
	\draw (7, 0) circle (0.1);
	\draw[->] (-1.9, 0.) -- (-1.5, 0.);
	\draw (-1.5, 0.) -- (-1.1, 0.);
	\draw (-1.5, -0.175) node {$\alpha _1$};
	\draw[->] (-0.9, 0.) -- (-0.5, 0.);
	\draw (-0.5, 0.) -- (-0.1, 0.);
	\draw (-0.5, -0.175) node {$\alpha _2$};
	\draw[->] (0.1, 0.) -- (0.5, 0.);
	\draw (0.5, 0.) -- (0.9, 0.);
	\draw (0.5, -0.175) node {$\alpha _3$};
	\draw[->] (1.1, 0.) -- (1.5, 0.);
	\draw (1.5, 0.) -- (1.9, 0.);
	\draw (1.5, -0.175) node {$\alpha _4$};
	\draw[->] (2.57071, -0.429289) -- (2.75, -0.25);
	\draw (2.75, -0.25) -- (2.92929, -0.0707107);
	\draw (2.9, -0.33) node {$\alpha _5$};
	\draw[->] (2.07071, 0.0707107) -- (2.25, 0.25);
	\draw (2.25, 0.25) -- (2.42929, 0.429289);
	\draw (2.1, 0.35) node {$\alpha _5$};
	\draw[->] (3.1, 0.) -- (3.5, 0.);
	\draw (3.5, 0.) -- (3.9, 0.);
	\draw (3.5, -0.175) node {$\alpha _4$};
	\draw[->] (4.1, 0.) -- (4.5, 0.);
	\draw (4.5, 0.) -- (4.9, 0.);
	\draw (4.5, -0.175) node {$\alpha _3$};
	\draw[->] (5.1, 0.) -- (5.5, 0.);
	\draw (5.5, 0.) -- (5.9, 0.);
	\draw (5.5, -0.175) node {$\alpha _2$};
	\draw[->] (6.1, 0.) -- (6.5, 0.);
	\draw (6.5, 0.) -- (6.9, 0.);
	\draw (6.5, -0.175) node {$\alpha _1$};
	\draw[->] (2.07071, -0.0707107) -- (2.25, -0.25);
	\draw (2.25, -0.25) -- (2.42929, -0.429289);
	\draw (2.1, -0.33) node {$\alpha _6$};
	\draw[->] (2.57071, 0.429289) -- (2.75, 0.25);
	\draw (2.75, 0.25) -- (2.92929, 0.0707107);
	\draw (2.9, 0.35) node {$\alpha _6$};
	\draw (-2, -0.25) node {$\lambda$};
\end{tikzpicture}
\end{equation*}

\subsection{Proposition~\ref{proposition:a:e:string:type}. $(\Sigma_\Phi, \Sigma_{\lambda, \Phi}) \cong (A_5, E_{6})$.}
Since any square has to be commutative, we do not include all the labels in the central cube of the diagram for the sake of a cleaner picture.

\begin{equation*}
\begin{tikzpicture}[scale=1.4]
	\draw (1, 0) circle (0.1);
	\draw (2, 0) circle (0.1);
	\draw (3, 0) circle (0.1);
	\draw (4, 0) circle (0.1);
	\draw (2, 2) circle (0.1);
	\draw (2, 1) circle (0.1);
	\draw (3, 1) circle (0.1);
	\draw (4, 1) circle (0.1);
	\draw (3, 2) circle (0.1);
	\draw (4, 2) circle (0.1);
	\draw (3.5, 1.5) circle (0.1);
	\draw (4.5, 1.5) circle (0.1);
	\draw (3.5, 2.5) circle (0.1);
	\draw (3.5, 3.5) circle (0.1);
	\draw (4.5, 2.5) circle (0.1);
	\draw (4.5, 3.5) circle (0.1);
	\draw (5.5, 1.5) circle (0.1);
	\draw (5.5, 2.5) circle (0.1);
	\draw (5.5, 3.5) circle (0.1);
	\draw (6.5, 3.5) circle (0.1);
	\draw[->] (1.1, 0.) -- (1.5, 0.);
	\draw (1.5, 0.) -- (1.9, 0.);
	\draw (1.5, -0.175) node {$\alpha _3$};
	\draw[->] (2.1, 0.) -- (2.5, 0.);
	\draw (2.5, 0.) -- (2.9, 0.);
	\draw (2.5, -0.175) node {$\alpha _2$};
	\draw[->] (3.1, 0.) -- (3.5, 0.);
	\draw (3.5, 0.) -- (3.9, 0.);
	\draw (3.5, -0.175) node {$\alpha _1$};
	\draw[->] (2.1, 1.) -- (2.5, 1.);
	\draw (2.5, 1.) -- (2.9, 1.);
	\draw (2.5, 0.825) node {$\alpha _2$};
	\draw[->] (3.1, 1.) -- (3.5, 1.);
	\draw (3.5, 1.) -- (3.9, 1.);
	\draw (3.5, 0.825) node {$\alpha _1$};
	\draw[->] (2.1, 2.) -- (2.5, 2.);
	\draw (2.5, 2.) -- (2.9, 2.);
	\draw (2.5, 1.825) node {$\alpha _2$};
	\draw[->] (3.1, 2.) -- (3.5, 2.);
	\draw (3.5, 2.) -- (3.9, 2.);
	\draw (3.5, 1.825) node {$\text{}$};
	\draw[->] (2., 0.1) -- (2., 0.5);
	\draw (2., 0.5) -- (2., 0.9);
	\draw (1.8, 0.5) node {$\alpha _4$};
	\draw[->] (2., 0.1) -- (2., 0.5);
	\draw (2., 0.5) -- (2., 0.9);
	\draw (1.8, 0.5) node {$\alpha _4$};
	\draw[->] (3., 0.1) -- (3., 0.5);
	\draw (3., 0.5) -- (3., 0.9);
	\draw (2.8, 0.5) node {$\alpha _4$};
	\draw[->] (4., 0.1) -- (4., 0.5);
	\draw (4., 0.5) -- (4., 0.9);
	\draw (3.8, 0.5) node {$\alpha _4$};
	\draw[->] (3., 1.1) -- (3., 1.5);
	\draw (3., 1.5) -- (3., 1.9);
	\draw (2.8, 1.5) node {$\alpha _5$};
	\draw[->] (4., 1.1) -- (4., 1.5);
	\draw (4., 1.5) -- (4., 1.9);
	\draw (3.8, 1.5) node {$\text{}$};
	\draw[->] (2., 1.1) -- (2., 1.5);
	\draw (2., 1.5) -- (2., 1.9);
	\draw (1.8, 1.5) node {$\alpha _5$};
	\draw[->] (3.07071, 1.07071) -- (3.25, 1.25);
	\draw (3.25, 1.25) -- (3.42929, 1.42929);
	\draw (3.1, 1.35) node {$\text{}$};
	\draw[->] (4.07071, 1.07071) -- (4.25, 1.25);
	\draw (4.25, 1.25) -- (4.42929, 1.42929);
	\draw (4.1, 1.35) node {$\text{}$};
	\draw[->] (3.07071, 2.07071) -- (3.25, 2.25);
	\draw (3.25, 2.25) -- (3.42929, 2.42929);
	\draw (3.1, 2.35) node {$\alpha _3$};
	\draw[->] (4.07071, 2.07071) -- (4.25, 2.25);
	\draw (4.25, 2.25) -- (4.42929, 2.42929);
	\draw (4.1, 2.35) node {$\text{}$};
	\draw[->] (3.6, 1.5) -- (4., 1.5);
	\draw (4., 1.5) -- (4.4, 1.5);
	\draw (4., 1.325) node {$\text{}$};
	\draw[->] (3.5, 1.6) -- (3.5, 2.);
	\draw (3.5, 2.) -- (3.5, 2.4);
	\draw (3.3, 2.) node {$\text{}$};
	\draw[->] (4.5, 1.6) -- (4.5, 2.);
	\draw (4.5, 2.) -- (4.5, 2.4);
	\draw (4.3, 2.) node {$\text{}$};
	\draw[->] (3.6, 2.5) -- (4., 2.5);
	\draw (4., 2.5) -- (4.4, 2.5);
	\draw (4., 2.325) node {$\alpha _1$};
	\draw[->] (4.6, 1.5) -- (5., 1.5);
	\draw (5., 1.5) -- (5.4, 1.5);
	\draw (5., 1.325) node {$\alpha _2$};
	\draw[->] (4.6, 2.5) -- (5., 2.5);
	\draw (5., 2.5) -- (5.4, 2.5);
	\draw (5., 2.325) node {$\alpha _2$};
	\draw[->] (4.6, 3.5) -- (5., 3.5);
	\draw (5., 3.5) -- (5.4, 3.5);
	\draw (5., 3.325) node {$\alpha _2$};
	\draw[->] (3.6, 3.5) -- (4., 3.5);
	\draw (4., 3.5) -- (4.4, 3.5);
	\draw (4., 3.325) node {$\alpha _1$};
	\draw[->] (5.6, 3.5) -- (6., 3.5);
	\draw (6., 3.5) -- (6.4, 3.5);
	\draw (6., 3.325) node {$\alpha _3$};
	\draw[->] (3.5, 2.6) -- (3.5, 3.);
	\draw (3.5, 3.) -- (3.5, 3.4);
	\draw (3.3, 3.) node {$\alpha _4$};
	\draw[->] (4.5, 2.6) -- (4.5, 3.);
	\draw (4.5, 3.) -- (4.5, 3.4);
	\draw (4.3, 3.) node {$\alpha _4$};
	\draw[->] (5.5, 2.6) -- (5.5, 3.);
	\draw (5.5, 3.) -- (5.5, 3.4);
	\draw (5.3, 3.) node {$\alpha _4$};
	\draw[->] (5.5, 1.6) -- (5.5, 2.);
	\draw (5.5, 2.) -- (5.5, 2.4);
	\draw (5.3, 2.) node {$\alpha _5$};
	\draw (1, -0.25) node {$\lambda$};
\end{tikzpicture}
\end{equation*}

\subsection{Proposition~\ref{proposition:d:e:string:type}. $(\Sigma_\Phi, \Sigma_{\lambda, \Phi}) \cong (D_5, E_{6})$.}

\begin{equation*}
	\begin{tikzpicture}[scale=1.4]
		\draw (0, 0) circle (0.1);
		\draw (1, 0) circle (0.1);
		\draw (2, 0) circle (0.1);
		\draw (3, 0) circle (0.1);
		\draw (4, 0) circle (0.1);
		\draw (2, 1) circle (0.1);
		\draw (3, 1) circle (0.1);
		\draw (4, 1) circle (0.1);
		\draw (3, 2) circle (0.1);
		\draw (4, 2) circle (0.1);
		\draw (5, 2) circle (0.1);
		\draw (3, 3) circle (0.1);
		\draw (4, 3) circle (0.1);
		\draw (5, 3) circle (0.1);
		\draw (6, 3) circle (0.1);
		\draw (7, 3) circle (0.1);
		\draw[->] (0.1, 0.) -- (0.5, 0.);
		\draw (0.5, 0.) -- (0.9, 0.);
		\draw (0.5, -0.175) node {$\alpha _1$};
		\draw[->] (1.1, 0.) -- (1.5, 0.);
		\draw (1.5, 0.) -- (1.9, 0.);
		\draw (1.5, -0.175) node {$\alpha _3$};
		\draw[->] (2.1, 0.) -- (2.5, 0.);
		\draw (2.5, 0.) -- (2.9, 0.);
		\draw (2.5, -0.175) node {$\alpha _4$};
		\draw[->] (3.1, 0.) -- (3.5, 0.);
		\draw (3.5, 0.) -- (3.9, 0.);
		\draw (3.5, -0.175) node {$\alpha _5$};
		\draw[->] (2.1, 1.) -- (2.5, 1.);
		\draw (2.5, 1.) -- (2.9, 1.);
		\draw (2.5, 0.825) node {$\alpha _4$};
		\draw[->] (3.1, 1.) -- (3.5, 1.);
		\draw (3.5, 1.) -- (3.9, 1.);
		\draw (3.5, 0.825) node {$\alpha _5$};
		\draw[->] (3.1, 2.) -- (3.5, 2.);
		\draw (3.5, 2.) -- (3.9, 2.);
		\draw (3.5, 1.825) node {$\alpha _5$};
		\draw[->] (4.1, 2.) -- (4.5, 2.);
		\draw (4.5, 2.) -- (4.9, 2.);
		\draw (4.5, 1.825) node {$\alpha _4$};
		\draw[->] (3.1, 3.) -- (3.5, 3.);
		\draw (3.5, 3.) -- (3.9, 3.);
		\draw (3.5, 2.825) node {$\alpha _5$};
		\draw[->] (4.1, 3.) -- (4.5, 3.);
		\draw (4.5, 3.) -- (4.9, 3.);
		\draw (4.5, 2.825) node {$\alpha _4$};
		\draw[->] (5.1, 3.) -- (5.5, 3.);
		\draw (5.5, 3.) -- (5.9, 3.);
		\draw (5.5, 2.825) node {$\alpha _3$};
		\draw[->] (6.1, 3.) -- (6.5, 3.);
		\draw (6.5, 3.) -- (6.9, 3.);
		\draw (6.5, 2.825) node {$\alpha _2$};
		\draw[->] (2., 0.1) -- (2., 0.5);
		\draw (2., 0.5) -- (2., 0.9);
		\draw (1.8, 0.5) node {$\alpha _2$};
		\draw[->] (3., 0.1) -- (3., 0.5);
		\draw (3., 0.5) -- (3., 0.9);
		\draw (2.8, 0.5) node {$\alpha _2$};
		\draw[->] (4., 0.1) -- (4., 0.5);
		\draw (4., 0.5) -- (4., 0.9);
		\draw (3.8, 0.5) node {$\alpha _2$};
		\draw[->] (3., 1.1) -- (3., 1.5);
		\draw (3., 1.5) -- (3., 1.9);
		\draw (2.8, 1.5) node {$\alpha _3$};
		\draw[->] (4., 1.1) -- (4., 1.5);
		\draw (4., 1.5) -- (4., 1.9);
		\draw (3.8, 1.5) node {$\alpha _3$};
		\draw[->] (3., 2.1) -- (3., 2.5);
		\draw (3., 2.5) -- (3., 2.9);
		\draw (2.8, 2.5) node {$\alpha _1$};
		\draw[->] (4., 2.1) -- (4., 2.5);
		\draw (4., 2.5) -- (4., 2.9);
		\draw (3.8, 2.5) node {$\alpha _1$};
		\draw[->] (5., 2.1) -- (5., 2.5);
		\draw (5., 2.5) -- (5., 2.9);
		\draw (4.8, 2.5) node {$\alpha _1$};
		\draw (0, -0.25) node {$\lambda$};
	\end{tikzpicture}
\end{equation*}

\subsection{Proposition~\ref{proposition:e6:e7:string:type}. $(\Sigma_\Phi, \Sigma_{\lambda, \Phi}) \cong (E_6, E_{7})$.}
\begin{equation*}
	\begin{tikzpicture}[scale=1.4]
		\draw (0, 0) circle (0.1);
		\draw (1, 0) circle (0.1);
		\draw (2, 0) circle (0.1);
		\draw (3, 0) circle (0.1);
		\draw (4, 0) circle (0.1);
		\draw (5, 0) circle (0.1);
		\draw (3, 1) circle (0.1);
		\draw (4, 1) circle (0.1);
		\draw (5, 1) circle (0.1);
		\draw (4, 2) circle (0.1);
		\draw (5, 2) circle (0.1);
		\draw (6, 2) circle (0.1);
		\draw (4, 3) circle (0.1);
		\draw (5, 3) circle (0.1);
		\draw (6, 3) circle (0.1);
		\draw (7, 3) circle (0.1);
		\draw (8, 3) circle (0.1);
		\draw (4, 4) circle (0.1);
		\draw (5, 4) circle (0.1);
		\draw (6, 4) circle (0.1);
		\draw (7, 4) circle (0.1);
		\draw (8, 4) circle (0.1);
		\draw (7, 5) circle (0.1);
		\draw (8, 5) circle (0.1);
		\draw (9, 5) circle (0.1);
		\draw (10, 5) circle (0.1);
		\draw (11, 5) circle (0.1);
		\draw[->] (0.1, 0.) -- (0.5, 0.);
		\draw (0.5, 0.) -- (0.9, 0.);
		\draw (0.5, -0.175) node {$\alpha _6$};
		\draw[->] (1.1, 0.) -- (1.5, 0.);
		\draw (1.5, 0.) -- (1.9, 0.);
		\draw (1.5, -0.175) node {$\alpha _5$};
		\draw[->] (2.1, 0.) -- (2.5, 0.);
		\draw (2.5, 0.) -- (2.9, 0.);
		\draw (2.5, -0.175) node {$\alpha _4$};
		\draw[->] (3.1, 0.) -- (3.5, 0.);
		\draw (3.5, 0.) -- (3.9, 0.);
		\draw (3.5, -0.175) node {$\alpha _3$};
		\draw[->] (4.1, 0.) -- (4.5, 0.);
		\draw (4.5, 0.) -- (4.9, 0.);
		\draw (4.5, -0.175) node {$\alpha _1$};
		\draw[->] (3.1, 1.) -- (3.5, 1.);
		\draw (3.5, 1.) -- (3.9, 1.);
		\draw (3.5, 0.825) node {$\alpha _3$};
		\draw[->] (4.1, 1.) -- (4.5, 1.);
		\draw (4.5, 1.) -- (4.9, 1.);
		\draw (4.5, 0.825) node {$\alpha _1$};
		\draw[->] (4.1, 2.) -- (4.5, 2.);
		\draw (4.5, 2.) -- (4.9, 2.);
		\draw (4.5, 1.825) node {$\alpha _1$};
		\draw[->] (5.1, 2.) -- (5.5, 2.);
		\draw (5.5, 2.) -- (5.9, 2.);
		\draw (5.5, 1.825) node {$\alpha _3$};
		\draw[->] (4.1, 3.) -- (4.5, 3.);
		\draw (4.5, 3.) -- (4.9, 3.);
		\draw (4.5, 2.825) node {$\alpha _1$};
		\draw[->] (5.1, 3.) -- (5.5, 3.);
		\draw (5.5, 3.) -- (5.9, 3.);
		\draw (5.5, 2.825) node {$\alpha _3$};
		\draw[->] (6.1, 3.) -- (6.5, 3.);
		\draw (6.5, 3.) -- (6.9, 3.);
		\draw (6.5, 2.825) node {$\alpha _4$};
		\draw[->] (7.1, 3.) -- (7.5, 3.);
		\draw (7.5, 3.) -- (7.9, 3.);
		\draw (7.5, 2.825) node {$\alpha _2$};
		\draw[->] (4.1, 4.) -- (4.5, 4.);
		\draw (4.5, 4.) -- (4.9, 4.);
		\draw (4.5, 3.825) node {$\alpha _1$};
		\draw[->] (5.1, 4.) -- (5.5, 4.);
		\draw (5.5, 4.) -- (5.9, 4.);
		\draw (5.5, 3.825) node {$\alpha _3$};
		\draw[->] (6.1, 4.) -- (6.5, 4.);
		\draw (6.5, 4.) -- (6.9, 4.);
		\draw (6.5, 3.825) node {$\alpha _4$};
		\draw[->] (7.1, 4.) -- (7.5, 4.);
		\draw (7.5, 4.) -- (7.9, 4.);
		\draw (7.5, 3.825) node {$\alpha _2$};
		\draw[->] (7.1, 5.) -- (7.5, 5.);
		\draw (7.5, 5.) -- (7.9, 5.);
		\draw (7.5, 4.825) node {$\alpha _2$};
		\draw[->] (8.1, 5.) -- (8.5, 5.);
		\draw (8.5, 5.) -- (8.9, 5.);
		\draw (8.5, 4.825) node {$\alpha _4$};
		\draw[->] (9.1, 5.) -- (9.5, 5.);
		\draw (9.5, 5.) -- (9.9, 5.);
		\draw (9.5, 4.825) node {$\alpha _3$};
		\draw[->] (10.1, 5.) -- (10.5, 5.);
		\draw (10.5, 5.) -- (10.9, 5.);
		\draw (10.5, 4.825) node {$\alpha _1$};
		\draw[->] (3., 0.1) -- (3., 0.5);
		\draw (3., 0.5) -- (3., 0.9);
		\draw (2.8, 0.5) node {$\alpha _2$};
		\draw[->] (4., 0.1) -- (4., 0.5);
		\draw (4., 0.5) -- (4., 0.9);
		\draw (3.8, 0.5) node {$\alpha _2$};
		\draw[->] (5., 0.1) -- (5., 0.5);
		\draw (5., 0.5) -- (5., 0.9);
		\draw (4.8, 0.5) node {$\alpha _2$};
		\draw[->] (4., 1.1) -- (4., 1.5);
		\draw (4., 1.5) -- (4., 1.9);
		\draw (3.8, 1.5) node {$\alpha _4$};
		\draw[->] (5., 1.1) -- (5., 1.5);
		\draw (5., 1.5) -- (5., 1.9);
		\draw (4.8, 1.5) node {$\alpha _4$};
		\draw[->] (4., 2.1) -- (4., 2.5);
		\draw (4., 2.5) -- (4., 2.9);
		\draw (3.8, 2.5) node {$\alpha _5$};
		\draw[->] (5., 2.1) -- (5., 2.5);
		\draw (5., 2.5) -- (5., 2.9);
		\draw (4.8, 2.5) node {$\alpha _5$};
		\draw[->] (6., 2.1) -- (6., 2.5);
		\draw (6., 2.5) -- (6., 2.9);
		\draw (5.8, 2.5) node {$\alpha _5$};
		\draw[->] (4., 3.1) -- (4., 3.5);
		\draw (4., 3.5) -- (4., 3.9);
		\draw (3.8, 3.5) node {$\alpha _6$};
		\draw[->] (5., 3.1) -- (5., 3.5);
		\draw (5., 3.5) -- (5., 3.9);
		\draw (4.8, 3.5) node {$\alpha _6$};
		\draw[->] (6., 3.1) -- (6., 3.5);
		\draw (6., 3.5) -- (6., 3.9);
		\draw (5.8, 3.5) node {$\alpha _6$};
		\draw[->] (7., 3.1) -- (7., 3.5);
		\draw (7., 3.5) -- (7., 3.9);
		\draw (6.8, 3.5) node {$\alpha _6$};
		\draw[->] (8., 3.1) -- (8., 3.5);
		\draw (8., 3.5) -- (8., 3.9);
		\draw (7.8, 3.5) node {$\alpha _6$};
		\draw[->] (7., 4.1) -- (7., 4.5);
		\draw (7., 4.5) -- (7., 4.9);
		\draw (6.8, 4.5) node {$\alpha _5$};
		\draw[->] (8., 4.1) -- (8., 4.5);
		\draw (8., 4.5) -- (8., 4.9);
		\draw (7.8, 4.5) node {$\alpha _5$};
		\draw (0, -0.25) node {$\lambda$};
	\end{tikzpicture}
\end{equation*}

\subsection{Proposition~\ref{proposition:b:f:string:type}. $(\Sigma_\Phi, \Sigma_{\lambda, \Phi}) \cong (B_3, F_{4})$.}

\begin{equation*}
\begin{tikzpicture}[scale=1.4]
	\draw (0, 0) circle (0.1);
	\draw (1, 0) circle (0.1);
	\draw (2, 0) circle (0.1);
	\draw (3, 0) circle (0.1);
	\draw (2.5, 0.5) circle (0.1);
	\draw (2.5, -0.5) circle (0.1);
	\draw (4, 0) circle (0.1);
	\draw (4, 0) circle (0.1);
	\draw (5, 0) circle (0.1);
	\draw[->] (0.1, 0.) -- (0.5, 0.);
	\draw (0.5, 0.) -- (0.9, 0.);
	\draw (0.5, -0.175) node {$\alpha _1$};
	\draw[->] (1.1, 0.) -- (1.5, 0.);
	\draw (1.5, 0.) -- (1.9, 0.);
	\draw (1.5, -0.175) node {$\alpha _2$};
	\draw[->] (2.57071, -0.429289) -- (2.75, -0.25);
	\draw (2.75, -0.25) -- (2.92929, -0.0707107);
	\draw (2.9, -0.33) node {$\alpha _2$};
	\draw[->] (2.07071, 0.0707107) -- (2.25, 0.25);
	\draw (2.25, 0.25) -- (2.42929, 0.429289);
	\draw (2.1, 0.35) node {$\alpha _2$};
	\draw[->] (3.1, 0.) -- (3.5, 0.);
	\draw (3.5, 0.) -- (3.9, 0.);
	\draw (3.5, -0.175) node {$\alpha _2$};
	\draw[->] (4.1, 0.) -- (4.5, 0.);
	\draw (4.5, 0.) -- (4.9, 0.);
	\draw (4.5, -0.175) node {$\alpha _1$};
	\draw[->] (2.07071, -0.0707107) -- (2.25, -0.25);
	\draw (2.25, -0.25) -- (2.42929, -0.429289);
	\draw (2.1, -0.33) node {$\alpha _3$};
	\draw[->] (2.57071, 0.429289) -- (2.75, 0.25);
	\draw (2.75, 0.25) -- (2.92929, 0.0707107);
	\draw (2.9, 0.35) node {$\alpha _3$};
	\draw (0, -0.25) node {$\lambda$};
\end{tikzpicture}
\end{equation*}

\subsection{Proposition~\ref{proposition:c:f:string:type}. $(\Sigma_\Phi, \Sigma_{\lambda, \Phi}) \cong (C_3, F_{4})$.}

\begin{equation*}
\begin{tikzpicture}[scale=1.4]
	\draw (0, 0) circle (0.1);
	\draw (1, 0) circle (0.1);
	\draw (2, 0) circle (0.1);
	\draw (3, 0) circle (0.1);
	\draw (4, 0) circle (0.1);
	\draw (2, 1) circle (0.1);
	\draw (3, 1) circle (0.1);
	\draw (4, 1) circle (0.1);
	\draw (5, 1) circle (0.1);
	\draw (3, 2) circle (0.1);
	\draw (4, 2) circle (0.1);
	\draw (5, 2) circle (0.1);
	\draw (6, 2) circle (0.1);
	\draw (7, 2) circle (0.1);
	\draw[->] (0.1, 0.) -- (0.5, 0.);
	\draw (0.5, 0.) -- (0.9, 0.);
	\draw (0.5, -0.175) node {$\alpha _1$};
	\draw[->] (1.1, 0.) -- (1.5, 0.);
	\draw (1.5, 0.) -- (1.9, 0.);
	\draw (1.5, -0.175) node {$\alpha _2$};
	\draw[->] (2.1, 0.) -- (2.5, 0.);
	\draw (2.5, 0.) -- (2.9, 0.);
	\draw (2.5, -0.175) node {$\alpha _2$};
	\draw[->] (3.1, 0.) -- (3.5, 0.);
	\draw (3.5, 0.) -- (3.9, 0.);
	\draw (3.5, -0.175) node {$\alpha _1$};
	\draw[->] (3.1, 2.) -- (3.5, 2.);
	\draw (3.5, 2.) -- (3.9, 2.);
	\draw (3.5, 1.825) node {$\alpha _1$};
	\draw[->] (4.1, 2.) -- (4.5, 2.);
	\draw (4.5, 2.) -- (4.9, 2.);
	\draw (4.5, 1.825) node {$\alpha _2$};
	\draw[->] (5.1, 2.) -- (5.5, 2.);
	\draw (5.5, 2.) -- (5.9, 2.);
	\draw (5.5, 1.825) node {$\alpha _2$};
	\draw[->] (6.1, 2.) -- (6.5, 2.);
	\draw (6.5, 2.) -- (6.9, 2.);
	\draw (6.5, 1.825) node {$\alpha _1$};
	\draw[->] (2.1, 1.) -- (2.5, 1.);
	\draw (2.5, 1.) -- (2.9, 1.);
	\draw (2.5, 0.825) node {$\alpha _2$};
	\draw[->] (3.1, 1.) -- (3.5, 1.);
	\draw (3.5, 1.) -- (3.9, 1.);
	\draw (3.5, 0.825) node {$\alpha _1$};
	\draw[->] (4.1, 1.) -- (4.5, 1.);
	\draw (4.5, 1.) -- (4.9, 1.);
	\draw (4.5, 0.825) node {$\alpha _2$};
	\draw[->] (2., 0.1) -- (2., 0.5);
	\draw (2., 0.5) -- (2., 0.9);
	\draw (1.8, 0.5) node {$\alpha _3$};
	\draw[->] (3., 0.1) -- (3., 0.5);
	\draw (3., 0.5) -- (3., 0.9);
	\draw (2.8, 0.5) node {$\alpha _3$};
	\draw[->] (4., 0.1) -- (4., 0.5);
	\draw (4., 0.5) -- (4., 0.9);
	\draw (3.8, 0.5) node {$\alpha _3$};
	\draw[->] (3., 1.1) -- (3., 1.5);
	\draw (3., 1.5) -- (3., 1.9);
	\draw (2.8, 1.5) node {$\alpha _3$};
	\draw[->] (4., 1.1) -- (4., 1.5);
	\draw (4., 1.5) -- (4., 1.9);
	\draw (3.8, 1.5) node {$\alpha _3$};
	\draw[->] (5., 1.1) -- (5., 1.5);
	\draw (5., 1.5) -- (5., 1.9);
	\draw (4.8, 1.5) node {$\alpha _3$};
	\draw (0, -0.25) node {$\lambda$};
\end{tikzpicture}
\end{equation*}

\end{document}